\documentclass[reqno]{amsart}

\synctex=1

\usepackage{amssymb,amsmath,amsthm,xcolor,enumerate,hyperref,cleveref}

\usepackage{tikz-cd}
\usetikzlibrary{positioning}

\righthyphenmin=2
\tolerance=400

\allowdisplaybreaks

\newtheorem{thrm}{Theorem}[section]
\newtheorem{cor}[thrm]{Corollary}
\newtheorem{lem}[thrm]{Lemma}
\newtheorem{prop}[thrm]{Proposition}

\theoremstyle{definition}
\newtheorem{defn}[thrm]{Definition}

\newtheorem{rem}[thrm]{Remark}

\crefrangeformat{equation}{#3(#1)#4--#5(#2)#6}

\crefname{thrm}{Theorem}{Theorems}
\crefname{lem}{Lemma}{Lemmas}
\crefname{cor}{Corollary}{Corollaries}
\crefname{prop}{Proposition}{Propositions}
\crefname{defn}{Definition}{Definitions}
\crefname{exm}{Example}{Examples}
\crefname{rem}{Remark}{Remarks}
\crefname{section}{Section}{Sections}
\crefname{equation}{\unskip}{\unskip}
\crefname{enumi}{\unskip}{\unskip}

\makeatletter
\newcommand{\mylabel}[2]{#2\def\@currentlabel{#2}\label{#1}}
\makeatother
\renewcommand{\iff}{\Leftrightarrow}

\newcommand{\Mod}[1]{{\sf Mod}{(#1)}}

\newcommand{\impl}{\Rightarrow}

\newcommand{\id}{\mathrm{id}}

\newcommand{\cIui}[1]{{\mathcal I}_{ui}{(#1)}}

\newcommand{\cD}{\mathcal D}

\newcommand{\cU}[1]{\mathcal U{(#1)}}

\newcommand{\cG}[1]{\mathcal G{(#1)}}

\newcommand{\cM}[1]{\mathcal M{(#1)}}

\newcommand{\bb}[1]{\mathbb #1}

\newcommand{\End}[1]{\operatorname{\mathrm{End}}{#1}}

\newcommand{\iend}[1]{\operatorname{\mathit{end}}{#1}}

\DeclareMathOperator{\im}{im}%

\newcommand{\Hom}{\mathrm{Hom}}

\newcommand{\m}{{}^{-1}}

\newcommand{\0}{\theta}
\newcommand{\e}{\varepsilon}

\begin{document}
	
	\title[Partial cohomology and extensions]{Partial cohomology of groups and extensions of semilattices of abelian groups}
	
	\author{Mikhailo Dokuchaev}
	\address{Insituto de Matem\'atica e Estat\'istica, Universidade de S\~ao Paulo,  Rua do Mat\~ao, 1010, S\~ao Paulo, SP,  CEP: 05508--090, Brazil}
	\email{dokucha@gmail.com}
	
	\author{Mykola Khrypchenko}
	\address{Departamento de Matem\'atica, Universidade Federal de Santa Catarina, Campus Reitor Jo\~ao David Ferreira Lima, Florian\'opolis, SC,  CEP: 88040--900, Brazil}
	\email{nskhripchenko@gmail.com}
	
	\subjclass[2010]{Primary 20M30; Secondary 20M18, 16S35, 16W22.}
	\keywords{Partial group cohomology, partial action, extension, semilattice of abelian groups, inverse semigroup}
	
	\thanks{The first author was partially supported by CNPq of Brazil (Proc. 305975/2013--7) and by FAPESP of Brazil (Proc. 2015/09162--9). The second author was partially supported by FAPESP of Brazil (Proc. 2012/01554--7).}

	\begin{abstract}
		We extend the notion of a partial cohomology group $H^n(G,A)$ to the case of non-unital $A$ and find interpretations of $H^1(G,A)$ and $H^2(G,A)$ in the theory of extensions of semilattices of abelian groups by groups.
	\end{abstract}
	
	\maketitle
	
	\section*{Introduction}\label{sec-intro}
	
	Elaborated  in the theory of $C^*$-algebras   the  concept   of a  partial action draws growing attention of experts in analysis, algebra and beyond, resulting in new theoretic advances and remarkable applications. The early developments on partial actions and related concepts are described in the short survey \cite{D}, whereas the algebraic and  $C^*$-algebraic foundations of the theory, a detailed treatment of graded $C^*$-algebras by means of Fell bundles and partial $C^*$-crossed products, as well as prominent applications are the contents of the recent Exel's book \cite{E6} (see also the surveys \cite{F2,Pi4}).
	
	Exel's notion of a twisted partial group action \cite{E0} (see also \cite{DES1}) involves a kind of $2$-cocycle equality which suggested the development of a  cohomology theory  based on partial actions. As a first step, the theory of partial projective representations of groups was created in \cite{DN,DN2,DoNoPi} with further results  in \cite{DdLP,NP,Pi,Pi4}.  As expected, the notion of the corresponding Schur Multiplier, which is a semilattice of abelian groups, appeared in the treatment. While the idea of partial $2$-cocycles came from  \cite{E0,DES1}, the partial coboundaries naturally appeared in the notion of an equivalence of twisted partial actions introduced in \cite{DES2} with respect to the globalization problem.  The general definition of partial cohomology groups was given in \cite{DK}, where they were related to H. Lausch's cohomology of inverse semigroups \cite{Lausch}, and it was shown, moreover, that  each component of the partial Schur Multiplier is a disjoint union of cohomology groups with values in non-necessarily trivial partial modules. In \cite{DK} we assumed that the partial actions under consideration are all unital, which is a natural working restriction  made in the big majority of algebraic papers dealing with partial actions.

	Our subsequent paper \cite{DK2} was stimulated by the desire  to relate the second partial cohomology groups with extensions. During the course of the investigation it became clear that  the restriction on a partial action to be unital  may be omitted. The basic concept is that of an extension of a semilattice of groups $A$ by a group $G$, the main example being  the crossed product  $A\ast_\Theta G$ by a twisted partial action $\Theta$ of $G$ on $A$, and we explored a relation of these notions to  extensions of $A$ by an inverse semigroup $S,$  twisted $S$-modules and the corresponding crossed products in the sense of \cite{Lausch}. In fact, we deal with twisted $S$-modules structures on $A,$ whose twistings satisfy a normality condition, considered by N. Sieben in   \cite{Sieben98}, which is stronger than the one imposed by H. Lausch in \cite{Lausch}, and   we call them Sieben's twisted modules.   One of the main results of \cite{DK2}  establishes, up to certain equivalences and identifications, a  one-to-one correspondence between twisted partial actions of groups on $A$ and Sieben's twisted module structures  on $A$ over $E$-unitary inverse semigroups.
	
	In the present paper we define the cohomology groups with values in a non-necessarily unital partial module and give interpretations for the first and second cohomology groups in terms of extensions of semilattices of abelian groups by groups. In \cref{sec-prelim} we recall  some background from \cite{Clifford-Preston-1,DE,DES1,DES2,DK2,Howie,Lausch,Lawson} on inverse semigroups,  their cohomology, partial group actions and related notions  needed in the sequel. We start  \cref{sec-H^n(S_A)} by revising the free resolutions  $C(S)$ and $D(S)$ of $\bb Z_S$ from \cite{Lausch} and correct a couple of errors with respect to   $C(S)$ made in  \cite{Lausch} and give some details which are missing in  \cite{Lausch}   (see, in particular, \cref{rem-Lausch's-partial'',rem-<sigma'n(C_n(S))>-ne-C-(n+1)(S),rem-sigma'_n(C_n(S))-ne-C_(n+1)(S)}).  Next, since  Sieben's twisted $S$-modules have order preserving twistings, we define in the same \cref{sec-H^n(S_A)} the cohomology groups $H^n_\le(S^1,A^1)$ based on order preserving cochains and prove some basic facts (see \cref{sec-H^n_<=}). The cohomology groups   $H^n(G,A)$ with values in a non-necessarily unital partial $G$-module $A$ are defined in \cref{sec-non-unital-H^n}, some preliminary facts are proved and a relation to  $H^n_\le(S^1,A^1)$ is established (see \cref{H^n(GA)-cong-H^n_<=(SA)}). The latter, in its turn, implies a connection of $H^2(G,A)$ with the equivalence classes  of twistings related to $A$ (see \cref{w-cohom-to-twisting}). An interpretation of $H^2(G,A)$ in terms of extensions of semilattices of abelian groups by $G$ is given in \cref{sec-ext-abel}, the main result being \cref{cl-eq-ext<->H^2}. This is done in close interaction with extensions of semilattices of groups by inverse semigroups studied in~\cite{Lausch}. In the final \cref{sec-split} we relate split extensions (see \cref{A->U->G-splits-defn}) and $H^1(G,A)$. This is done in two steps. First, given an inverse semigroup $S$ and an $S$-module $A$, we consider split extensions $A \to U \to S$  and prove in \cref{conjugate-classes<->H^1_<=(S^1_A^1)} that  the so-called  $C^0_\le$-equivalence classes of splittings of $U$ are in a one-to-one correspondence with the elements of $H^1_\le(S^1,A^1)$. Then, for a group $G$ and a $G$-module $A,$ we define the concept of a split extension $A \to U\to G$ (see \cref{A->U->G-splits-defn}),  and  using  \cref{conjugate-classes<->H^1_<=(S^1_A^1),H^n(GA)-cong-H^n_<=(SA)} we show in \cref{eq-classes-of-splittings<->H^1(G_A)} that the equivalence classes of splittings of $U$ are in a one-to-one correspondence with the elements of $H^1(G,A)$.

	\section{Preliminaries}\label{sec-prelim}
	
	\subsection{Inverse semigroups}\label{subsec-inv-sem-congr}
	
	A semigroup $S$ is called {\it regular}, whenever for each $s\in S$ there exists $t\in S$, called an {\it inverse of} $s$, such that $sts=s$ and $tst=t$. Regular semigroups, in which every element $s$ admits a unique inverse, usually denoted by $s\m$, are called {\it inverse semigroups}. These are precisely those regular semigroups, whose idempotents commute (see~\cite[Theorem 1.1.3]{Lawson}). 
	
	Each inverse semigroup $S$ admits the natural partial order $\le$ with $s\le t$ whenever $s=et$ for some $e\in E(S)$ (see~\cite[p. 21]{Lawson}), where $E(S)$ denotes the semilattice of idempotents of $S$. It follows that the binary relation $(s,t)\in\sigma\iff\exists u\le s,t$ is a group congruence on $S$ in the sense that $S/\sigma$ is a group (see~\cite[2.4]{Lawson}). Moreover, it is the {\it minimum group congruence} on $S$, since it is contained in any other such congruence~\cite[p. 62]{Lawson}. The quotient $S/\sigma$ is thus called the {\it maximum group image} of $S$ and denoted by $\cG S$. 
	
	It is easy to see that all idempotents of $S$ are $\sigma$-equivalent, since $ef\le e,f$ for $e,f\in E(S)$. Inverse semigroups, in which idempotents constitute a $\sigma$-class, are called {\it $E$-unitary}~\cite[p. 64]{Lawson}. Equivalently, $S$ is $E$-unitary if, given $s\in S$ and $e\in E(S)$, it follows from $e\le s$ that $s\in E(S)$. Another property that characterizes $E$-unitary inverse semigroups: $(s,t)\in\sigma\iff s\m t,st\m\in E(S)$~\cite[Theorem 2.4.6]{Lawson}.
	
	A {\it semilattice of groups} is an inverse semigroup $A$ which can be represented as a disjoint union of groups\footnote{Such inverse semigroups are also called Clifford semigroups (see~\cite[IV.2]{Howie} and~\cite[5.2]{Lawson}).}, more precisely, $A=\bigsqcup_{e\in E(A)}A_e$, where $A_e=\{a\in A\mid aa\m=a\m a=e\}$. For an inverse semigroup $A$ each of the following conditions is equivalent to the fact that $A$ is a semilattice of groups:
	\begin{enumerate}
		\item $aa\m=a\m a$ for all $a\in A$,
		\item $E(A)\subseteq C(A)$.
	\end{enumerate}
	In particular, any commutative inverse semigroup is a semilattice of (abelian) groups.
	
	\subsection{Twisted partial actions of groups on semigroups}\label{subsec-tw-pact}
	
	Recall from~\cite{DE,DK2} that a {\it multiplier} of a semigroup $S$ is a pair $w$ of maps $s\mapsto ws$ and $s\mapsto sw$ from $S$ to itself, such that for all $s,t\in S$
	\begin{enumerate}
		\item $w(st)=(ws)t$;
		\item $(st)w=s(tw)$;
		\item $s(wt)=(sw)t$.\footnote{Observe that $w$ is exactly a pair of linked right and left translations~\cite[p. 10]{Clifford-Preston-1}.}
	\end{enumerate}
	The multipliers of $S$ form a monoid $\cM S$ under the composition (see~\cite{DE} and~\cite[p. 11]{Clifford-Preston-1}). If $S$ is inverse, then 
	\begin{align}\label{eq-ws-inv}
	(ws)\m=s\m w\m,\ \ (sw)\m=w\m s\m
	\end{align}
	for all $s\in S$ and $w\in\cU{\cM S}$. Here and below $\cU{M}$ denotes the group of invertible elements of a monoid $M$.
	
	A {\it twisted partial action}~\cite{DES1,DK2} of a group $G$ on a semigroup $S$ is a pair $\Theta=(\0,w)$, where $\0$ is a collection $\{\0_x:\cD_{x\m}\to \cD_x\}_{x\in G}$ of isomorphisms between non-empty ideals of $S$ and $w=\{w_{x,y}\in\cU{\cM{\cD_x\cD_{xy}}}\}_{x,y\in G}$, such that for all $x,y,z\in G$
	\begin{enumerate}
		\item $\cD_x^2=\cD_x$ and $\cD_x\cD_y=\cD_y\cD_x$;\label{D_x^2=D_x}
		\item $\cD_1=S$ and $\0_1=\id_S$;\label{0_1=id_S}
		\item $\0_x(\cD_{x\m}\cD_y)=\cD_x\cD_{xy}$;\label{0_x(D_(x^(-1))D_y)}
		\item $\0_x\circ\0_y(s)=w_{x,y}\0_{xy}(s)w\m_{x,y}$ for any $s\in\cD_{y\m}\cD_{y\m x\m}$;\label{0_x-circ-0_y(s)}
		\item $w_{1,x}=w_{x,1}=\id_{\cD_x}$;
		\item $\0_x(sw_{y,z})w_{x,yz}=\0_x(s)w_{x,y}w_{xy,z}$ for all $s\in\cD_{x\m}\cD_y\cD_{yz}$.\label{0_x(sw_yz)w_xyz}
	\end{enumerate}
	Here the right-hand side of \cref{0_x-circ-0_y(s)} does not require a pair of additional brackets, since $\cD_x\cD_{xy}$ is an idempotent ideal and thus due to (ii) of~\cite[Proposition 2.5]{DE} one has $(ws)w'=w(sw')$ for all $w,w'\in\cM{\cD_x\cD_{xy}}$ and $s\in \cD_x\cD_{xy}$. The applicability of the multipliers in \cref{0_x(sw_yz)w_xyz} is explained by \cref{D_x^2=D_x,0_x(D_(x^(-1))D_y)}.
	
	Observe that, when $S$ is inverse, each ideal $I$ of $S$ is idempotent, as for $s\in I$ one has $s=s\cdot s\m s$ with $s\m s\in I$. It follows that $I\cap J=IJ$ of any two non-empty ideals of $S$, since $I\cap J=(I\cap J)^2\subseteq IJ$. In particular, any two non-empty ideals of $S$ commute. This shows that for an inverse $S$ item \cref{D_x^2=D_x} can be dropped and in \cref{0_x(D_(x^(-1))D_y),0_x-circ-0_y(s),0_x(sw_yz)w_xyz} the products of domains can be replaced by their intersections.
	
	By a {\it partial action}~\cite{DE} of $G$ on $S$ we mean a collection $\0=\{\0_x:\cD_{x\m}\to \cD_x\}_{x\in G}$ as above satisfying
	\begin{enumerate}
		\item $\cD_1=S$ and $\0_1=\id_S$;
		\item $\0_x(\cD_{x\m}\cap\cD_y)=\cD_x\cap\cD_{xy}$;
		\item $\0_x\circ\0_y(s)=\0_{xy}(s)$ for any $s\in\cD_{y\m}\cap\cD_{y\m x\m}$.
	\end{enumerate}
	When $S$ is inverse, this is exactly a twisted partial action of $G$ on $S$ with trivial $w$. 
	
	Two twisted partial actions $(\0,w)$ and $(\0',w')$ of $G$ on $S$ are called {\it equivalent}~\cite{DES2,DK2}, if for all $x\in G$
	\begin{enumerate}
		\item $\cD'_x=\cD_x$
    \end{enumerate}
	and there exists $\{\e_x\in\cU{\cM{\cD_x}}\mid x\in G\}$ such that for all $x,y\in G$
	\begin{enumerate}
		\item[(ii)] $\0'_x(s)=\e_x\0_x(s)\e\m_x$, $s\in\cD_{x\m}$;
		\item[(iii)] $\0'_x(s)w'_{x,y}\e_{xy}=\e_x\0_x(s\e_y)w_{x,y}$, $s\in\cD_{x\m}\cD_y$.
	\end{enumerate}
	
	Given a twisted partial action $\Theta=(\theta,w)$ of $G$ on $S$, the crossed product $S*_\Theta G$ is the set $\{s\delta_x\mid s\in \cD_x\}$ with the multiplication $s\delta_x\cdot t\delta_y=\0_x(\0\m_x(s)t)w_{x,y}\delta_{xy}$. It follows from the proof of~\cite[Theorem 2.4]{DES1} that $S*_\Theta G$ is a semigroup. Moreover, if $S$ is inverse, then $S*_\Theta G$ is inverse with $(s\delta_x)\m=w\m_{x\m,x}\0_{x\m}(s\m)\delta_{x\m}$ (see~\cite{DK2}). For the crossed product coming from a partial action $\0$ of $G$ on $S$ we shall use the notation $S*_\0 G$.
	
	\subsection{Twisted modules over inverse semigroups}\label{subsec-tw-mod}
	
	An endomorphism $\varphi$ of a semilattice of groups $A$ is said to be {\it relatively invertible}~\cite{Lausch}, whenever there exist $\bar\varphi\in\End A$ and $e_\varphi\in E(A)$ satisfying
	\begin{enumerate}
		\item $\bar\varphi\circ\varphi(a)=e_\varphi a$ and $\varphi\circ\bar\varphi(a)=\varphi(e_\varphi)a$ for any $a\in A$; 
		\item $e_\varphi$ is the identity of $\bar\varphi(A)$ and $\varphi(e_\varphi)$ is the identity of $\varphi(A)$.
	\end{enumerate}
	In this case $\bar\varphi$ is also relatively invertible with $\bar{\bar\varphi}=\varphi$ and $e_{\bar\varphi}=\varphi(e_\varphi)$.  The set of relatively invertible endomorphisms forms an inverse semigroup $\iend A$~\cite[Proposition 8.1]{Lausch}. It was proved in~\cite{DK2} that $\iend A$ is isomorphic to the semigroup $\cIui A$ of isomorphisms between unital ideals of $A$.
	
	Let $S$ be an inverse semigroup. By a {\it twisted $S$-module}~\cite{Lausch,DK2} we mean a semilattice of groups $A$ together with a triple $\Lambda=(\alpha,\lambda,f)$, where $\alpha$ is an isomorphism $E(S)\to E(A)$, $\lambda$ is a map $S\to\iend A$ and $f:S^2\to A$ is a map with $f(s,t)\in A_{\alpha(stt\m s\m)}$ (called a {\it twisting}) satisfying the following properties 
	\begin{enumerate}
		\item $\lambda_e(a)=\alpha(e)a$ for all $e\in E(S)$ and $a\in A$;\label{lambda_e(a)}
		\item $\lambda_s(\alpha(e))=\alpha(ses\m)$ for all $s\in S$ and $e\in E(S)$;\label{lambda_s(alpha(e))}
		\item $\lambda_s\circ\lambda_t(a)=f(s,t)\lambda_{st}(a)f(s,t)\m$ for all $s,t\in S$ and $a\in A$;\label{lambda_s-circ-lambda_t(a)}
		\item $f(se,e)=\alpha(ses\m)$ and $f(e,es)=\alpha(ess\m)$ for all $s\in S$ and $e\in E(S)$;\label{f(se_e)-and-f(e_es)}
		\item $\lambda_s(f(t,u))f(s,tu)=f(s,t)f(st,u)$ for all $s,t,u\in S$.\label{2-cocycle-id-for-f}
	\end{enumerate}
	
	If $A$ is commutative, then a twisted $S$-module $\Lambda=(\alpha,\lambda,f)$ on $A$ splits into the {\it $S$-module} $(\alpha,\lambda)$ on $A$ in the sense of~\cite[p. 274]{Lausch}, that is the one satisfying \cref{lambda_e(a),lambda_s(alpha(e))} with $\lambda$ being a homomorphism $S\to\End A$, and the map $f(s,t)\in A_{\alpha(stt\m s\m)}$, for which \cref{f(se_e)-and-f(e_es),2-cocycle-id-for-f} hold. Such a map $f$ will be called a {\it twisting related to $(\alpha,\lambda)$}. 
	
	Observe from~\cite{Lausch} that $S$-modules form an abelian category $\Mod S$, where a morphism $\varphi:(\alpha,\lambda)\to(\alpha',\lambda')$ is a homomorphism of semigroups, such that 
	\begin{enumerate}
		\item $\varphi\circ\alpha=\alpha'$ on $E(S)$;\label{varphi-alpha=alpha'}
		\item $\varphi\circ\lambda_s=\lambda_s'\circ\varphi$ for all $s\in S$.\label{varphi-lambda_s=lambda'_s-varphi}
	\end{enumerate}
	
	Two twisted $S$-module structures $\Lambda=(\alpha,\lambda,f)$ and $\Lambda'=(\alpha',\lambda',f')$ on $A$ are said to be {\it equivalent}~\cite{DK2}, if
	\begin{enumerate}
		\item $\alpha'=\alpha$;\label{eq-tw-mod-alpha'}
		\item $\lambda'_s(a)=g(s)\lambda_s(a)g(s)\m$;\label{eq-tw-mod-lambda'}
		\item $f'(s,t)g(st)=g(s)\lambda_s(g(t))f(s,t)$\label{eq-tw-mod-f'}
	\end{enumerate}
	for some function $g:S\to A$ with $g(s)\in A_{\alpha(ss\m)}$. When $A$ is commutative, this exactly means that $(\alpha,\lambda)=(\alpha',\lambda')$ and the twistings $f$ and $f'$ are {\it equivalent} in the sense that $f'(s,t)=\lambda_s(g(t))g(st)\m g(s)f(s,t)$.
	
	Let $\Lambda=(\alpha,\lambda,f)$ be a twisted $S$-module structure on $A$. The {\it crossed product of $A$ and $S$ by $\Lambda$}~\cite{Lausch,DK2} is the set 
	$$
	A*_\Lambda S=\{a\delta_s\mid a\in A,\ \ s\in S,\ \ aa\m=\alpha(ss\m)\}.
	$$
	It is an inverse semigroup under the multiplication $a\delta_s\cdot b\delta_t=a\lambda_s(b)f(s,t)\delta_{st}$ with 
	\begin{align}\label{a-delta_s-inv}
	(a\delta_s)\m=f(s\m,s)\m\lambda_{s\m}(a\m)\delta_{s\m} 
	\end{align}
	(see~\cite[Theorem 9.1]{Lausch}). If $(\alpha,\lambda)$ is an $S$-module structure on $A$, then $A*_{(\alpha,\lambda)}S$ will mean the crossed product of $A$ and $S$ by $(\alpha,\lambda,f)$ with trivial $f$. 
	
	\subsection{Extensions of semilattices of groups by inverse semigroups}\label{subsec-ext-of-A-by-S}
	An {\it extension} of a semilattice of groups $A$ by an inverse semigroup $S$~\cite{Lausch} is an inverse semigroup $U$ with a monomorphism $i:A\to U$ and an idempotent-separating (i.\,e. injective on $E(U)$) epimorphism $j:U\to S$, such that $i(A)=j\m(E(S))$. 
	
	Any two extensions $A\overset{i}{\to}U\overset{j}{\to}S$ and $A\overset{i'}{\to}U'\overset{j'}{\to}S$ of $A$ by $S$ are called {\it equivalent}~\cite{Lausch} if there is a homomorphism $\mu:U\to U'$ such that the following diagram
	\begin{align*}
	\begin{tikzpicture}[node distance=1.5cm, auto]
	\node (A) {$A$};
	\node (U) [right of=A] {$U$};
	\node (S) [right of=U] {$S$};
	\node (A') [below of=A]{$A$};
	\node (U') [below of=U] {$U'$};
	\node (S') [below of=S] {$S$};
	\draw[->] (A) to node {$i$} (U);
	\draw[->] (U) to node {$j$} (S);
	\draw[->] (A') to node {$i'$} (U');
	\draw[->] (U') to node {$j'$} (S');
	\draw[-,double distance=2pt] (A) to node {} (A');
	\draw[->] (U) to node {$\mu$} (U');
	\draw[-,double distance=2pt] (S) to node {} (S');
	\end{tikzpicture}
	\end{align*}
	commutes. In this case $\mu$ is an isomorphism.
	
	For each twisted $S$-module structure $\Lambda=(\alpha,\lambda,f)$ on $A$ the crossed product $A*_\Lambda S$ is an extension of $A$ by $S$, where $i(a)=a\delta_{\alpha\m(aa\m)}$ and $j(a\delta_s)=s$.
	
	Given an extension $A\overset{i}{\to}U\overset{j}{\to}S$, a map $\rho:S\to U$ with $j\circ\rho=\id$ and $\rho(E(S))\subseteq E(U)$ is called a {\it transversal}~\cite{Lausch} of $j$. It follows that $\rho$ maps isomorphically $E(S)$ onto $E(U)$ and 
	\begin{align}\label{rho|_E(S)=j|_E(U)-inv}
	\rho|_{E(S)}=(j|_{E(U)})\m.
	\end{align}
	
	The choice of a transversal $\rho$ induces a twisted $S$-module structure $\Lambda=(\alpha,\lambda,f)$ on $A$ by the formulas (see~\cite{Lausch}):
	\begin{align}
	\alpha&=i\m\circ\rho|_{E(S)},\label{alpha-from-rho}\\ 
	\lambda_s(a)&=i\m(\rho(s)i(a)\rho(s)\m),\label{lambda-from-rho}\\
	f(s,t)&=i\m(\rho(s)\rho(t)\rho(st)\m).\label{f-from-rho}
	\end{align}
	In this case $A*_\Lambda S$ is equivalent to $U$, the map $a\delta_s\mapsto i(a)\rho(s)$ being the corresponding isomorphism (see~\cite[Theorem 9.1]{Lausch}).
	
	A transversal $\rho$ of $j$ is said to be {\it order-preserving}~\cite{Sieben98,DK2}, when $s\le t\impl\rho(s)\le\rho(t)$ for all $s,t\in S$. It was shown in~\cite{DK2} that $\rho$ is order-preserving if and only if, instead of \cref{f(se_e)-and-f(e_es)}, $\Lambda$ satisfies a stronger condition
	\begin{enumerate}
		\item[\mylabel{Sieben's-condition}{(iv')}] $f(s,e)=\alpha(ses\m)$ and $f(e,s)=\alpha(ess\m)$ for all $s\in S$ and $e\in E(S)$.
	\end{enumerate}
	Inspired by~\cite[Definition 5.4]{Buss-Exel11}, such twisted $S$-modules were called {\it Sieben's} twisted $S$-modules in~\cite{DK2}.
	
	\subsection{The relation between Sieben's twisted modules and twisted partial actions}\label{subsec-Theta-and-Lambda}
	Let $S$ be an $E$-unitary inverse semigroup and $A$ a semilattice of groups. It was proved in~\cite{DK2} that with each Sieben's twisted $S$-module structure $\Lambda=(\alpha,\lambda,f)$ on $A$ one can associate a twisted partial action of $\cG S$ on $A$ as follows:
	\begin{align}
	\cD_x&=\bigsqcup_{s\in x} A_{\alpha(ss\m)},\ \ x\in\cG S,\label{D-from-Lambda}\\
	\0_x(a)&=\lambda_s(a),\mbox{ where }s\in x\mbox{ such that }\alpha(s\m s)=aa\m,\label{0-from-Lambda}\\
	w_{x,y}a&=f(s,s\m t)a,\ \ aw_{x,y}=af(s,s\m t),\label{w-from-Lambda}
	\end{align}
	where $s\in x$ and $t\in xy$, such that $\alpha(ss\m)=\alpha(tt\m)=aa\m$. 
	
	Conversely, given a twisted partial action $\Theta=(\0,w)$ of a group $G$ on a semilattice of groups $A$, there exists~\cite{DK2} an $E$-unitary inverse semigroup $S$, an epimorphism $\kappa:S\to G$ with $\ker\kappa=\sigma$ and an isomorphism $\alpha: E(S)\to E(A)$, such that $\Lambda=(\alpha,\lambda,f)$ is a Sieben's twisted $S$-module structure on $A$, where
	\begin{align}
	\lambda_s(a)&=\0_{\kappa(s)}(\alpha(s\m s)a),\label{lambda-from-Theta}\\
	f(s,t)&=\alpha(stt\m s\m)w_{\kappa(s),\kappa(t)}.\label{f-from-Theta}
	\end{align}
	Notice that one can take $S=E(A)*_\0 G$ with
	\begin{align}
	\kappa(e\delta_x)&=x,\label{kappa(e-delta_x)}\\
	\alpha(e\delta_1)&=e.\label{alpha(e-delta_1)}
	\end{align}

	Up to identification of isomorphic groups and semigroups, this defines a one-to-one correspondence between twisted partial actions of groups on $A$ and Sieben's twisted module structures over $E$-unitary inverse semigroups on $A$. Moreover, equivalent twisted partial actions correspond to equivalent twisted modules (see~\cite[Theorem 9.3]{DK2}).
	
	\subsection{Extensions of semilattices of groups by groups}\label{subsec-ext-of-A-by-G}
	Recall from~\cite{DK2} that an {\it extension} of a semilattice of groups $A$ by a group $G$ is an inverse semigroup $U$ with a monomorphism $i:A\to U$ and an epimorphism $j:U\to G$, such that $i(A)=j\m(1)$.
	
	Two extensions $A\overset{i}{\to}U\overset{j}{\to}G$ and $A\overset{i'}{\to}U'\overset{j'}{\to}G$ of $A$ by $G$ are called {\it equivalent} if there is an isomorphism $\mu:U\to U'$ making the following diagram
	\begin{align}\label{eq-comm-diag-equiv-ext-A-G}
	\begin{tikzpicture}[node distance=1.5cm, auto]
	\node (A) {$A$};
	\node (U) [right of=A] {$U$};
	\node (G) [right of=U] {$G$};
	\node (A') [below of=A]{$A$};
	\node (U') [below of=U] {$U'$};
	\node (G') [below of=S] {$G$};
	\draw[->] (A) to node {$i$} (U);
	\draw[->] (U) to node {$j$} (G);
	\draw[->] (A') to node {$i'$} (U');
	\draw[->] (U') to node {$j'$} (G');
	\draw[-,double distance=2pt] (A) to node {} (A');
	\draw[->] (U) to node {$\mu$} (U');
	\draw[-,double distance=2pt] (G) to node {} (G');
	\end{tikzpicture}
	\end{align}
	commute.
	
	It was proved in~\cite{DK2} that for any extension $A\overset{i}{\to}U\overset{j}{\to}G$ there exists a {\it refinement} $A\overset{i}{\to}U\overset{\pi}{\to}S\overset{\kappa}{\to}G$, where $S$ is an $E$-unitary inverse semigroup, $\pi$ and $\kappa$ are epimorphisms, such that 
	\begin{enumerate}
		\item $A\overset{i}{\to}U\overset{\pi}{\to}S$ is an extension of $A$ by $S$;\label{A->^i-U->^pi-S}
		\item $j=\kappa\circ\pi$.\label{j=kappa-circ-pi}
	\end{enumerate}
	Moreover, it follows that $\ker\kappa=\sigma$.
	
	If $A\overset{i'}{\to}U'\overset{j'}{\to}G$ is an other extension with a refinement $A\overset{i'}{\to}U'\overset{\pi'}{\to}S'\overset{\kappa'}{\to}G$, then any homomorphism $\mu:U\to U'$ making the diagram \cref{eq-comm-diag-equiv-ext-A-G} commute induces a homomorphism $\nu:S\to S'$, such that
	\begin{align}\label{refined-diag-ext-A-G}
	\begin{tikzpicture}[node distance=1.5cm, auto]
	\node (A) {$A$};
	\node (U) [right of=A] {$U$};
	\node (S) [right of=U] {$S$};
	\node (G) [right of=S] {$G$};
	\node (A') [below of=A]{$A$};
	\node (U') [below of=U] {$U'$};
	\node (S') [below of=S] {$S'$};
	\node (G') [below of=G] {$G$};
	\draw[->] (A) to node {$i$} (U);
	\draw[->] (U) to node {$\pi$} (S);
	\draw[->] (S) to node {$\kappa$} (G);
	\draw[->] (A') to node {$i'$} (U');
	\draw[->] (U') to node {$\pi'$} (S');
	\draw[->] (S') to node {$\kappa'$} (G');
	\draw[-,double distance=2pt] (A) to node {} (A');
	\draw[->] (U) to node {$\mu$} (U');
	\draw[->,dashed] (S) to node {$\nu$} (S');
	\draw[-,double distance=2pt] (G) to node {} (G');
	\end{tikzpicture}
	\end{align}
	commutes. Moreover, if $\mu$ is injective, then $\nu$ is injective; if $\mu$ is surjective, then $\nu$ is surjective. In particular, this shows that a refinement is unique up to an isomorphism.
	
	An extension $A\overset{i}{\to}U\overset{j}{\to}G$ is called {\it admissible}~\cite{DK2}, if the corresponding $\pi:U\to S$ has an order-preserving transversal. Up to equivalence, the admissible extensions of $A$ by $G$ are precisely the crossed products $A*_\Theta G$ by twisted partial actions $\Theta$ of $G$ on $A$ (see~\cite{DK2}).
	
	\subsection{Cohomology of inverse semigroups}\label{sec-H^n-of-S}
	Let $A$ be an $S$-module. Following~\cite[p. 275]{Lausch}, when $\alpha$ need not be specified, we shall often use $E(S)$ as an indexing semilattice for the group components of $A$. More precisely, for arbitrary $e\in E(S)$, $A_e$ will mean $\{a\in A\mid aa\m=\alpha(e)\}$. It follows from $\varphi\circ\alpha=\alpha'$ that any morphism $\varphi:A\to A'$ in $\Mod S$ maps $A_e$ to a subset of $A'_e$.
	
	More generally, given a semilattice $L$, an {\it $L$-set} is a disjoint union $T=\bigsqcup_{l\in L}T_l$ of sets $T_l$, $l\in L$. An {\it $L$-map} is a function $f:T\to T'$, such that $f(T_l)\subseteq T_l'$ for all $l\in L$. Thus, any $S$-module is an $E(S)$-set, and any morphism of $S$-modules is an $E(S)$-map.
	
	Now, considering the forgetful functor from $\Mod S$ to the category of $E(S)$-sets, one can naturally define the free $S$-module $F(T)$ over an $E(S)$-set $T$. It turns out (see~\cite[Proposition 3.1]{Lausch}) that such a module exists and can be constructed in the following way (we use additive notation). For any $e\in E(S)$ the component $F(T)_e$ is the free abelian group over the set of pairs (written as formal products) $\{st\in S\times T\mid ss\m=e,\ \ t\in\bigcup_{f\ge s\m s}T_f\}$. The sum of $st\in F(T)_e$ and $s't'\in F(T)_{e'}$ is the formal sum $(e's)t+(es')t'$ in $F(T)_{ee'}$. Clearly, $\alpha(e)=0_e$, where $0_e$ is the zero of $F(T)_e$. The endomorphism $\lambda_s$ of $F(T)$ is defined on a generator $s't'$ by $\lambda_s(s't')=(ss')t'$. An element $t$ of $T_e$, $e\in E(S)$, is identified with $et\in F(T)_e$, determining an embedding of $T$ into $F(T)$ in the category of $E(S)$-sets. 
	
	\begin{rem}\label{rem-lambda_s(t)=st}
		One should be careful when identifying the elements of $T$ with their images in $F(T)$. For instance, taking $t\in T_f$ and considering it as an element of $F(T)$, one could expect that $\lambda_s(t)=st$. However, $st$ need not belong to $F(T)$. In fact, $\lambda_s(t)=\lambda_s(ft)=(sf)t$. Nevertheless, if $st\in F(T)$, i.\,e. $s\m s\le f$, then 
		\begin{align}\label{eq-sf=s}
		sf=(ss\m s)f=s(s\m sf)=s(s\m s)=s, 
		\end{align}
		so $\lambda_s(t)=st$.
	\end{rem}
	
	It follows that $F(T)$ is projective in $\Mod S$ and that for any $A\in\Mod S$ there is an epimorphism $F(A)\to A$, so $\Mod S$ has enough projectives. Considering the semilattice $\bb Z_S$ of copies $(\bb Z_S)_e=\{n_e\mid n\in\bb Z\}$ of $\bb Z$ with $n_e+m_f=(n+m)_{ef}$ as a ``trivial'' $S$-module in the sense that $\lambda_s(n_e)=n_{ses\m}$ and $\alpha(e)=0_e$, Lausch defines in~\cite[Section 5]{Lausch} the cohomology groups of $S$ with values in $A$ as $H^n(S,A)=R^n\Hom(-,A)$ applied\footnote{Notice that Lausch uses the general notion ``cohomology functor'' defined axiomatically, but what he constructs is exactly the right derived functor of $\Hom(-,A)$.} to $\bb Z_S$. 
	
	\section{On the cohomology of inverse monoids}\label{sec-H^n(S_A)}
	
	\subsection{The free resolutions \texorpdfstring{$C(S)$}{C(S)} and \texorpdfstring{$D(S)$}{D(S)} of \texorpdfstring{$\bb Z_S$}{ZS}}\label{sec-res-C-and-D}
	Let $S$ be an inverse monoid. Recall from~\cite[pp. 280--282]{Lausch} that in this case the cohomology groups of $S$ can be computed using the free resolutions $C(S)$ and $D(S)$ of $\bb Z_S$ in $\Mod S$, where $C_n(S)=F(V_n(S))$ and $D_n(S)=F(W_n(S))$, $n\ge 0$, with
	\begin{align}
	V_0(S)_e&=\begin{cases}
	\{(\phantom{x})\}, & e=1_S,\\
	\emptyset, & e\ne 1_S;
	\end{cases}\label{V_0(S)_e}\\
	V_n(S)_e&=\{(s_1,\dots,s_n)\mid s_1\dots s_ns\m_n\dots s\m_1=e\},\ \ n\ge 1;\label{V_n(S)_e}\\
	W_0(S)&=V_0(S);\label{W_0(S)=V_0(S)}\\
	W_n(S)_e&=\{(s_1,\dots,s_n)\in V_n(S)_e\mid s_i\ne 1_S,\ \ i=1,\dots,n\},\ \ n\ge 1.\label{W_n(S)_e}
	\end{align}
	
	The $S$-morphisms $\partial_0':C_0(S)\to\bb Z_S$ and $\partial_n':C_n(S)\to C_{n-1}(S)$, $n\ge 1$, are defined in the following way:
	\begin{align}
	\partial_0'(\phantom{x})&=1_{1_S};\label{eq-epsilon'}\\
	\partial'_1(s)&=s(\phantom{x})-(\phantom{x});\label{partial_1'}\\
	\partial'_n(s_1,\dots,s_n)&=\lambda_{s_1}(s_2,\dots,s_n)\notag\\
	&\quad+\sum_{i=1}^{n-1}(-1)^i(s_1,\dots,s_is_{i+1},\dots,s_n)\notag\\
	&\quad+(-1)^n(s_1,\dots,s_{n-1}),\ \ n\ge 2.\label{partial_n'}
	\end{align}
	
	The morphisms $\partial_0'':D_0(S)\to \bb Z_S$ and $\partial''_1:D_1(S)\to D_0(S)$ are simply $\partial_0'$ and $\partial'_1|_{D_1(S)}$, respectively. To define $\partial''_n:D_n(S)\to D_{n-1}(S)$, $n\ge 2$, we use \cref{partial_n'} with the following modification. If $n\ge 2$ and the term $u(v_1,\dots,v_{n-1})\in C_{n-1}(S)$ appears (with some sign) on the right-hand side of \cref{partial_n'}, then it is replaced by $0_{uu\m}$ whenever $1_S\in\{v_1,\dots,v_{n-1}\}$. For instance, the summand $(-1)^i(s_1,\dots,s_is_{i+1},\dots,s_n)$, which, as we know, is identified with 
	$$
	(-1)^ie(s_1,\dots,s_is_{i+1},\dots,s_n)\in C_{n-1}(S)_e,\ e=s_1\dots s_ns\m_n\dots s\m_1,
	$$
	is set to be $0_e$, if $s_is_{i+1}=1_S$. 
	
	\begin{rem}\label{rem-Lausch's-partial''}
		In~\cite[p. 282]{Lausch} Lausch required additionally that all the elements of $C_{n-1}(S)$ of the form $1_S(v_1,\dots,v_{n-1})$ be identified with $0_{1_S}$. Assuming this, we would get for $s,t\in S$ with $s,t,st\ne 1_S$ and $ss\m=tt\m=1_S$ that
		\begin{align*}
		\partial''_1\circ\partial''_2(s,t)&=\partial''_1(\lambda_s(t)-(st)+(s))\\
		&=\partial''_1(stt\m(t)-stt\m s\m(st)+ss\m(s))\\
		&=\partial''_1(s(t))=s(t(\phantom{x})-(\phantom{x}))=st(\phantom{x})-s(\phantom{x}),
		\end{align*}
		which does not equal $0_{stt\m s\m}$, if $st\ne s$ (for example, when $S$ is a group, the latter follows from $t\ne 1_S$, so such $s$ and $t$ can be easily found in, say, $S=\bb Z_3$). Thus, $\partial''_1\circ\partial''_2$ is not necessarily zero, demonstrating that Lausch's definition should be corrected.
	\end{rem}
	
	Observe that, when $S$ is a group, $C(S)$ coincides with the standard resolution of $\bb Z$ in the bar notation (or, shortly, the bar resolution). Then $D(S)$ is the normalized bar resolution~\cite[I.5]{Brown}. One should also note that Maclane~\cite[IV.5]{Maclane} uses the term ``bar resolution'' for the normalized bar resolution.
	
	For the exactness of the sequence $C(S)$ Lausch implicitly uses in~\cite[p. 281]{Lausch} the Maclane's argument~\cite[p. 115]{Maclane}\footnote{Lausch refers to Maclane's book on p. 280 while proving the exactness of some other sequence, on p. 281 he uses the same argument without any reference.} by constructing $E(S)$-maps $\sigma'_{-1}:\bb Z_S\to C_0(S)$ and $\sigma'_n:C_n(S)\to C_{n+1}(S)$, $n\ge 0$, such that 
	\begin{enumerate}
		\item for each $e\in E(S)$ the restrictions $\sigma'_{-1}|_{(\bb Z_S)_e}$ and $\sigma'_n|_{C_n(S)_e}$, $n\ge 0$, are morphisms of abelian groups $(\bb Z_S)_e\to C_0(S)_e$ and $C_n(S)_e\to C_{n+1}(S)_e$, respectively;\label{sigma'_n|C_n(S)_e}
		\item the following equalities are fulfilled:\label{sigma'_n-homotopy}
		\begin{align}
		\partial'_0\circ\sigma'_{-1}&=
		\id_{\bb Z_S};\label{eq-epsilon'tau'=id}\\
		\partial'_{n+1}\circ\sigma'_n+\sigma'_{n-1}\circ\partial'_n&=
		\id_{C_n(S)},\ \ n\ge 0 \label{eq-partial'_(n+1)sigma'_n+sigma'_(n-1)partial'_n=id}.		
		\end{align}
	\end{enumerate}
	
	\begin{lem}\label{lem-ker(partial'_n)-in-im(partial'_(n+1))}
		If the maps $\sigma_n'$, $n\ge -1$, satisfying \cref{sigma'_n|C_n(S)_e,sigma'_n-homotopy} exist, then $\partial_0'$ is surjective and $\ker{\partial'_n}\subseteq\im{\partial'_{n+1}}$, $n\ge 0$.
	\end{lem}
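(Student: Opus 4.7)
This is the classical ``chain homotopy implies acyclicity'' argument, specialised to the semilattice-of-abelian-groups setting: nothing is used beyond the two hypothesised identities, and the only care required is to keep track of the idempotent indexing the component one is working in.

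For the surjectivity of $\partial'_0$, let $z\in\bb Z_S$ be arbitrary; \cref{eq-epsilon'tau'=id} gives $z=\partial'_0(\sigma'_{-1}(z))$, whence $z\in\im\partial'_0$. For the inclusion $\ker\partial'_n\subseteq\im\partial'_{n+1}$, fix $n\ge 0$ and $x\in\ker\partial'_n$. Since $C_n(S)=\bigsqcup_{e\in E(S)}C_n(S)_e$, the element $x$ lies in some unique component $C_n(S)_e$; because $\partial'_n$ is an $E(S)$-map, its value $\partial'_n(x)$ is the zero $0_e$ of $C_{n-1}(S)_e$. Evaluating \cref{eq-partial'_(n+1)sigma'_n+sigma'_(n-1)partial'_n=id} on $x$ yields
$$
x=\partial'_{n+1}(\sigma'_n(x))+\sigma'_{n-1}(0_e),
$$
and by \cref{sigma'_n|C_n(S)_e} the map $\sigma'_{n-1}|_{C_{n-1}(S)_e}$ is a homomorphism of abelian groups into $C_n(S)_e$, so $\sigma'_{n-1}(0_e)=0_e\in C_n(S)_e$. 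Therefore $x=\partial'_{n+1}(\sigma'_n(x))\in\im\partial'_{n+1}$.

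There is essentially no genuine obstacle. The one point worth flagging is that in a semilattice of abelian groups ``$=0$'' is component-dependent, so one must first pin down the idempotent $e$ for which $x\in C_n(S)_e$ before asserting $\sigma'_{n-1}(\partial'_n(x))=0$; once this is done, the $E(S)$-map hypothesis on both $\partial'_n$ and $\sigma'_{n-1}$ makes the bookkeeping automatic, and the two claimed conclusions drop out as one-line consequences of the contracting homotopy.
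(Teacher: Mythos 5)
Your proof is correct and follows exactly the paper's argument: surjectivity of $\partial'_0$ from \cref{eq-epsilon'tau'=id}, and for $c\in\ker\partial'_n\cap C_n(S)_e$ the identity \cref{eq-partial'_(n+1)sigma'_n+sigma'_(n-1)partial'_n=id} together with the fact that $\sigma'_{n-1}$ restricted to the $e$-component is a group homomorphism (so it kills $0_e$), as in \cref{sigma'_n|C_n(S)_e}. The component-tracking point you flag is precisely the one care the paper also takes; nothing further is needed.
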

	\begin{proof}
		Surjectivity of $\partial_0'$ is explained by \cref{eq-epsilon'tau'=id}. If $c\in\ker{\partial_n'}$, i.\,e. $c\in C_n(S)_e$ and $\partial_n'(c)=0_e$ for some $e\in E(S)$, then $\sigma_{n-1}'\circ\partial_n'(c)=\sigma_{n-1}'(0_e)$ which is zero of $C_n(S)_e$ in view of \cref{sigma'_n|C_n(S)_e}. So, $c=\partial'_{n+1}\circ\sigma'_n(c)\in\im{\partial'_{n+1}}$ by \cref{eq-partial'_(n+1)sigma'_n+sigma'_(n-1)partial'_n=id}.
	\end{proof}
	
	\begin{rem}\label{rem-<sigma'n(C_n(S))>-ne-C-(n+1)(S)}
		According to~\cite[p. 115]{Maclane} in order to prove the converse inclusions, i.\,e. that $C(S)$ is a chain complex in $\Mod S$, one should somehow deduce from $\partial'_n\circ\partial'_{n+1}\circ\sigma'_n=0$ that $\partial'_n\circ\partial'_{n+1}=0$, $n\ge 0$. In the classical case Maclane uses the fact that (in our notations) $\sigma'_n(C_n(S))$ generates $C_{n+1}(S)$ as an $S$-module, which is not true for $\sigma'_n$ introduced by Lausch in~\cite[p. 281]{Lausch} (see \cref{rem-sigma'_n(C_n(S))-ne-C_(n+1)(S)}).
	\end{rem}
	
	Nevertheless, we may still reduce the problem to the classical formula from the group cohomology. 
	
	\begin{lem}\label{lem-partial'^2=0}
		For each $n\ge 0$ we have
		\begin{align}\label{eq-partial'^2=0}
		\partial'_n\circ\partial'_{n+1}=0.
		\end{align}
	\end{lem}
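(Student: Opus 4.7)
The plan is to mimic the classical argument showing that the bar resolution of a group is a chain complex, taking care of the extra idempotent bookkeeping forced by the semigroup setting. For $n=0$ a direct computation suffices: using that $\partial_0'$ is an $S$-morphism and $\lambda_s(1_{1_S})=1_{ss\m}$, one has $\partial_0'\partial_1'(s)=\lambda_s(1_{1_S})-\lambda_{ss\m}(1_{1_S})=1_{ss\m}-1_{ss\m}=0_{ss\m}$.

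For $n\ge 1$ I would write $\partial'_k = \sum_{i=0}^k (-1)^i d_i^k$, where the face maps $d_i^k : C_k(S) \to C_{k-1}(S)$ are the unique $S$-morphisms whose value on a generator $(s_1,\dots,s_k)\in V_k(S)$ is $\lambda_{s_1}(s_2,\dots,s_k)$ for $i=0$, the collapsed tuple $(s_1,\dots,s_is_{i+1},\dots,s_k)$ for $1\le i\le k-1$, and $(s_1,\dots,s_{k-1})$ for $i=k$. The goal is then to verify the simplicial identities
\begin{align*}
d_i^n\circ d_j^{n+1}=d_{j-1}^n\circ d_i^{n+1},\qquad 0\le i<j\le n+1,
\end{align*}
on generators $(s_1,\dots,s_{n+1})$; once these hold, the standard double-sum sign cancellation (re-indexing $(i,j)\mapsto(i,j-1)$ on half of the terms) gives $\partial'_n\circ\partial'_{n+1}=0$, which is precisely the classical group-cohomology combinatorics alluded to just before the statement.

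The cases of the simplicial identities with $1\le i<j$ are straightforward, using only associativity in $S$. The only non-trivial cases are those with $i=0$, and they constitute the main obstacle: applying $\lambda_{s_1}$ to a tail $(s_2,\dots,s_{n+1})$, which is identified with $f(s_2,\dots,s_{n+1})\in C_n(S)_f$ for $f=s_2\cdots s_{n+1}s_{n+1}\m\cdots s_2\m$, produces an element whose prefix is $s_1 f$ rather than an honest tuple generator, and one must absorb the extra idempotent into the next multiplication. For instance, checking $d_0^n d_0^{n+1}=d_0^n d_1^{n+1}$ on $(s_1,\dots,s_{n+1})$ reduces, after unravelling, to the equality $s_1 f s_2 g = s_1 s_2 g$ in $S$, where $g=s_3\cdots s_{n+1}s_{n+1}\m\cdots s_3\m$. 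Writing $f=s_2 g s_2\m$ and using that $g$ and $s_2\m s_2$ are commuting idempotents of $E(S)$, one has $f s_2 g=s_2 g s_2\m s_2 g=s_2(s_2\m s_2)g^2=s_2 g$, giving the identity. All remaining cases with $i=0$ are analogous, each reducing to a manipulation of idempotents combined with the homomorphism property $\lambda_u\lambda_v=\lambda_{uv}$; with this handled uniformly, the combinatorial part of the proof is identical to the classical bar-resolution argument.
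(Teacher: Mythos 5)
Your strategy (decompose $\partial'_k$ into face maps and verify the simplicial identities) is viable and genuinely different from the paper's proof, but as written it has one concrete defect: the top face map $d_k^k$ does not exist as an $S$-morphism. On a generator $(s_1,\dots,s_k)\in V_k(S)_e$, $e=s_1\dots s_ks\m_k\dots s\m_1$, you prescribe the value $(s_1,\dots,s_{k-1})$, but this element lies in the component $C_{k-1}(S)_{f'}$ with $f'=s_1\dots s_{k-1}s\m_{k-1}\dots s\m_1$, and in general $f'>e$. The free module $F(T)$ has its universal property only with respect to $E(S)$-maps, i.e.\ component-preserving maps, so ``the unique $S$-morphism with these values'' is not defined for $i=k$ (your $d_0$ and the middle faces are fine, since their values land exactly in the $e$-component). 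The repair is to set $d_k^k(s_1,\dots,s_k)=e(s_1,\dots,s_{k-1})$; this coercion is exactly what the addition rule of the free module performs silently inside the alternating sum, so the patched $\sum_i(-1)^id_i^k$ still equals $\partial'_k$ of \cref{partial_n'}. But then your claim that all cases $1\le i<j$ of the simplicial identities need ``only associativity'' becomes inaccurate for $j=k$: the inserted idempotent must be absorbed, e.g.\ comparing $d_0^{k-1}\circ d_k^k$ with $d_{k-1}^{k-1}\circ d_0^k$ reduces to $es_1f''=s_1f$ where $f=s_2\dots s_ks\m_k\dots s\m_2$ and $f''=s_2\dots s_{k-1}s\m_{k-1}\dots s\m_2$, which one checks via $e=s_1fs\m_1$, $ff''=f$ and commuting idempotents --- manipulations of precisely the same flavour as your $i=0$ computation (which, for the record, is correct: $fs_2g=s_2gs\m_2s_2g=s_2g$).

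With that repair your argument goes through, and it is worth noting how it differs from the paper's route. The paper introduces no face maps at all: it observes that \cref{partial_1',partial_n'} are formally the classical bar-resolution formulas of Maclane, expands the classical identity $\partial_n\circ\partial_{n+1}[s_1|\dots|s_{n+1}]=0$, and remarks that each cancelling pair of terms lies in a single group component of $C_{n-1}(S)$, so it contributes the zero of that component; the resulting sum of component zeros is the zero of the product of the idempotents involved. That transports the classical computation wholesale, with the only semigroup-specific point being where the zeros live, whereas your version localizes the bookkeeping into finitely many identity types and makes the cancellation structural --- more work (one must check each identity, including the $j=k$ cases above), but reusable. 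Your $n=0$ computation agrees with the paper's up to a small notational slip: $\partial'_0(-(\phantom{x}))=-1_{1_S}$, which becomes $-1_{ss\m}$ only through the addition rule $n_e+m_f=(n+m)_{ef}$ of $\bb Z_S$, not because $\partial'_0$ literally outputs $\lambda_{ss\m}(1_{1_S})$; the conclusion $0_{ss\m}$ is correct.
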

	\begin{proof}
		When $n\ge 1$, the formulas \cref{partial_1',partial_n'} have exactly the same form as the ones for the bar resolution~\cite[IV.(5.3)]{Maclane} (we should only identify $(\phantom{x})$ with $[\phantom{x}]$, $(s_1,\dots,s_n)$ with $[s_1|\dots|s_n]$ and the application of $\lambda_s$ with the multiplication by $s$ on the left). Expanding the equality $\partial_n\circ\partial_{n+1}[s_1|\dots|s_{n+1}]=0$, $n\ge 1$, written for the bar resolution, and identifying each summand with an element of $C_{n-1}(S)$ as explained above, we obtain a formal proof that $\partial'_n\circ\partial'_{n+1}(s_1,\dots,s_{n+1})=0_{s_1\dots s_{n+1}s\m_{n+1}\dots s\m_1}$ with the only difference that $0$ (when appears as a sum of two terms with opposite signs) should be replaced by the zero of the corresponding component. For example, $\partial_1\circ\partial_2[s|t]=0$ expands to
		\begin{align*}
		\partial_1\circ\partial_2[s|t]&=\partial_1(s[t]-[st]+[s])\\
		&=s(t[\phantom{x}]-[\phantom{x}])-(st[\phantom{x}]-[\phantom{x}])+(s[\phantom{x}]-[\phantom{x}])\\
		&=(st[\phantom{x}]-st[\phantom{x}])+(s[\phantom{x}]-s[\phantom{x}])+([\phantom{x}]-[\phantom{x}])\\
		&=0.
		\end{align*}
		This gives
		\begin{align*}
		0_{stt\m s\m}&=0_{stt\m s\m}+0_{ss\m}+0_{1_S}\\
		&=(\lambda_{st}(\phantom{x})-\lambda_{st}(\phantom{x}))
		+(\lambda_s(\phantom{x})-\lambda_s(\phantom{x}))
		+((\phantom{x})-(\phantom{x}))\\
		&=\lambda_s(\lambda_t(\phantom{x})-(\phantom{x}))
		-(\lambda_{st}(\phantom{x})-(\phantom{x}))
		+(\lambda_s(\phantom{x})-(\phantom{x}))\\
		&=\partial'_1(\lambda_s(t)-(st)+(s))\\
		&=\partial'_1\circ\partial'_2(s,t).
		\end{align*}
		
		As to $\partial'_0\circ\partial'_1$, it will be calculated by \cref{eq-epsilon',partial_1'} explicitly:
		$$
		\partial'_0\circ\partial'_1(s)=
		\lambda_s(1_{1_S})-1_{1_S}=1_{ss\m}-1_{1_S}=(1-1)_{ss\m\cdot 1_S}=0_{ss\m}.
		$$
	\end{proof}
	
	Now, following~\cite[p. 281]{Lausch}, we define the maps $\sigma'_{-1}$ and $\sigma'_n$, $n\ge 0$, on the generators of the group components of $\bb Z_S$ and $C_n(S)$, $n\ge 0$, respectively, by
	\begin{align}
	\sigma'_{-1}(1_e)&=e(\phantom{x});\label{eq-tau'}\\
	\sigma'_0(s(\phantom{x}))&=(s);\label{eq-sigma'_0}\\
	\sigma'_n(s(s_1,\dots,s_n))&=(s,s_1,\dots,s_n).\label{eq-sigma'_n}
	\end{align}
	Obviously, $\sigma'_{-1}(1_e)\in C_0(S)_e$ and $\sigma'_0(s(\phantom{x}))\in C_1(S)_{ss\m}$. Now observe by \cref{eq-sf=s} that for $s(s_1,\dots,s_n)\in C_n(S)=F(V_n(S))$ we have $sfs\m=ss\m$, where $f=s_1\dots s_ns\m_n\dots s\m_1$.
	Hence $\sigma'_n(s(s_1,\dots,s_n))\in C_{n+1}(S)_{ss\m}$, $n\ge 1$. Thus, the maps $\sigma'_n$, $n\ge -1$, respect the partitions of $\bb Z_S$ and $C_n(S)$, $n\ge 0$, into components, so they uniquely extend to the $E(S)$-maps $\bb Z_S\to C_0(S)$ and $C_n(S)\to C_{n+1}(S)$, $n\ge 0$, with property \cref{sigma'_n|C_n(S)_e} above. We prove that \cref{sigma'_n-homotopy} is also fulfilled.
	
	\begin{lem}\label{lem-tau'-and-sigma'-satisfy-(ii)}
		The functions $\sigma'_n$, $n\ge -1$, satisfy the equalities \cref{eq-epsilon'tau'=id,eq-partial'_(n+1)sigma'_n+sigma'_(n-1)partial'_n=id}.
	\end{lem}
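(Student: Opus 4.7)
The plan is to verify both equations on free generators and then extend by additivity. For \cref{eq-epsilon'tau'=id}, applied to a generator $1_e\in(\bb Z_S)_e$: using that $\partial'_0$ is an $S$-morphism with $\partial'_0((\phantom{x}))=1_{1_S}$, one has $\partial'_0\circ\sigma'_{-1}(1_e)=\partial'_0(e(\phantom{x}))=\lambda_e(1_{1_S})=1_e$. The case $n=0$ of \cref{eq-partial'_(n+1)sigma'_n+sigma'_(n-1)partial'_n=id} then follows by direct expansion on $s(\phantom{x})\in C_0(S)_{ss\m}$: one finds $\partial'_1\circ\sigma'_0(s(\phantom{x}))=s(\phantom{x})-ss\m(\phantom{x})$ (the second summand arising once $(\phantom{x})\in C_0(S)_{1_S}$ is promoted to the component $C_0(S)_{ss\m}$), while $\sigma'_{-1}\circ\partial'_0(s(\phantom{x}))=\sigma'_{-1}(1_{ss\m})=ss\m(\phantom{x})$; these sum to $s(\phantom{x})$.

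The main case is $n\ge 1$, applied to a generator $s(s_1,\dots,s_n)\in C_n(S)_{ss\m}$, whose very existence is equivalent to the inequality $s_1\cdots s_n s_n\m\cdots s_1\m\ge s\m s$. I would first expand $\partial'_{n+1}((s,s_1,\dots,s_n))$ via \cref{partial_n'}, obtaining a leading term $\lambda_s(s_1,\dots,s_n)$, a first-merge term $-(ss_1,s_2,\dots,s_n)$, the alternating interior sum $\sum_{j=1}^{n-1}(-1)^{j+1}(s,s_1,\dots,s_js_{j+1},\dots,s_n)$, and the final term $(-1)^{n+1}(s,s_1,\dots,s_{n-1})$. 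On the other side, $S$-equivariance of $\partial'_n$ gives $\partial'_n(s(s_1,\dots,s_n))=\lambda_s(\partial'_n((s_1,\dots,s_n)))$; expanding $\partial'_n((s_1,\dots,s_n))$ by \cref{partial_n'}, each summand has the form $\lambda_u(\text{tuple})$, which by \cref{eq-sf=s} equals the free generator $u(\text{tuple})$, so that $\sigma'_{n-1}$ acts simply by prepending $u$. The resulting interior and final contributions pair with those from $\partial'_{n+1}\circ\sigma'_n$ with opposite signs and cancel, the summand $\lambda_{ss_1}(s_2,\dots,s_n)=(ss_1,s_2,\dots,s_n)$ cancels $-(ss_1,s_2,\dots,s_n)$, and what remains is $\lambda_s(s_1,\dots,s_n)=s(s_1,\dots,s_n)$, as required.

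The main obstacle is purely bookkeeping: at every step one has to check that a given $\lambda_u$ on a tuple collapses to the formal generator $u(\text{tuple})$, which by \cref{eq-sf=s} amounts to the inequality $u\m u\le$ (idempotent label of the tuple). This reduces in every case to the initial constraint $s_1\cdots s_n s_n\m\cdots s_1\m\ge s\m s$, combined with the observation (implicitly used already in \cref{lem-partial'^2=0}) that collapsing two adjacent entries $s_i,s_{i+1}$ to $s_is_{i+1}$ preserves the idempotent label of a tuple in $V_n(S)$, while dropping the last entry only increases it. Once this is verified uniformly, the cancellation pattern is formally identical to the classical contracting homotopy of the bar resolution, and no further subtlety enters.
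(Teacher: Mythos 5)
Your proposal is correct and takes essentially the same route as the paper's own proof: verify both identities on free generators, use \cref{eq-sf=s} to collapse each $\lambda_u$ applied to a tuple into the formal generator $u(\text{tuple})$ so that $\sigma'_{n-1}$ acts by prepending, and then run the classical bar-resolution cancellation, with the $n=0$ case handled by the same component-promotion bookkeeping. The one obligation you leave implicit --- that $(ss_1)\m ss_1\le s_2\dots s_ns_n\m\dots s_2\m$, needed for $ss_1(s_2,\dots,s_n)$ to lie in $C_{n-1}(S)$ --- is exactly the short computation the paper spells out from the initial constraint $s\m s\le s_1\dots s_ns_n\m\dots s_1\m$, so no genuine gap remains.
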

	\begin{proof}
		Taking the generator $1_e$ of $(\bb Z_S)_e$, we easily verify by \cref{eq-epsilon',eq-tau'} that
		$$
		\partial_0'\circ\sigma_{-1}'(1_e)=\partial_0'(e(\phantom{x}))=\lambda_e(\partial_0'(\phantom{x}))=\lambda_e(1_{1_S})=1_e,
		$$
		which is \cref{eq-epsilon'tau'=id}. 
		
		Furthermore, for any $s\in S$ by \cref{eq-epsilon',partial_1',eq-tau',eq-sigma'_0}:
		\begin{align*}
		\partial'_1\circ\sigma'_0(s(\phantom{x}))+\sigma_{-1}'\circ\partial_0'(s(\phantom{x}))
		&=\partial'_1(s)+\sigma_{-1}'\circ\lambda_s(1_{1_S})\\
		&=s(\phantom{x})-(\phantom{x})+\sigma_{-1}'(1_{ss\m})\\
		&=s(\phantom{x})-(\phantom{x})+ss\m(\phantom{x})\\
		&=s(\phantom{x})-(\phantom{x})+0_{ss\m}+(\phantom{x})\\
		&=s(\phantom{x}),
		\end{align*}
		giving \cref{eq-partial'_(n+1)sigma'_n+sigma'_(n-1)partial'_n=id} for $n=0$. 
		
		As to \cref{eq-partial'_(n+1)sigma'_n+sigma'_(n-1)partial'_n=id} for $n\ge 1$, using \cref{partial_n',eq-sigma'_n} we see that
		\begin{align}
		\partial'_{n+1}\circ\sigma'_n(s(s_1,\dots,s_n))&=\partial'_{n+1}(s,s_1,\dots,s_n)\notag\\
		&=\lambda_s(s_1,\dots,s_n)-(ss_1,s_2,\dots,s_n)\notag\\
		&\quad+\sum_{i=1}^{n-1}(-1)^{i+1}(s,s_1,\dots,s_is_{i+1},\dots,s_n)\notag\\
		&\quad+(-1)^{n+1}(s,s_1,\dots,s_{n-1})\label{eq-partial_(n+1)sigma'_n}
		\end{align}
		and
		\begin{align*}
		\sigma'_{n-1}\circ\partial'_n(s(s_1,\dots,s_n))&=\sigma'_{n-1}(\lambda_s(\partial'_n(s_1,\dots,s_n)))\\
		&=\sigma'_{n-1}(\lambda_s(\lambda_{s_1}(s_2,\dots,s_n)\\
		&\quad+\sum_{i=1}^{n-1}(-1)^i(s_1,\dots,s_is_{i+1},\dots,s_n)\\
		&\quad+(-1)^n(s_1,\dots,s_{n-1}))).
		\end{align*}
		
		We first observe that $\lambda_{s_1}(s_2,\dots,s_n)$ and $(s_1,\dots,s_is_{i+1},\dots,s_n)$, $1\le i\le n-1$, are in the component $C_{n-1}(S)_f$ of $C_{n-1}(S)$, while $(s_1,\dots,s_{n-1})\in C_{n-1}(S)_{f'}$, where $f=s_1\dots s_ns\m_n\dots s\m_1$ and $f'=s_1\dots s_{n-1}s\m_{n-1}\dots s\m_1$. Since clearly $f\le f'$, then using the formula for the addition in a free  module, we replace $(s_1,\dots,s_{n-1})$ by $f(s_1,\dots,s_{n-1})\in C_{n-1}(S)_f$. Now, applying $\lambda_s$, we get
		\begin{align}
		\sigma'_{n-1}\circ\partial'_n(s(s_1,\dots,s_n))&=\sigma'_{n-1}(\lambda_{ss_1}(s_2,\dots,s_n)\notag\\
		&\quad+\sum_{i=1}^{n-1}(-1)^i\lambda_s(s_1,\dots,s_is_{i+1},\dots,s_n)\notag\\
		&\quad+(-1)^nsf(s_1,\dots,s_{n-1})),\label{eq-sigma'_(n-1)partial'_n}
		\end{align}
		all the summands being in the same component of $C_{n-1}(S)$.
		
		Thanks to \cref{eq-sf=s} the product $sf$ equals $s$, because $s(s_1,\dots,s_n)\in F(V_n(S))$ with $(s_1,\dots,s_n)\in V_n(S)_f$. Further, we would like to rewrite $\lambda_{ss_1}$ and $\lambda_s$ in \cref{eq-sigma'_(n-1)partial'_n} as the multiplications on the left by $ss_1$ and $s$, respectively. By \cref{rem-lambda_s(t)=st} we need to check that after doing this we shall obtain elements of $C_{n-1}(S)=F(V_{n-1}(S))$. The fact that $s(s_1,\dots,s_is_{i+1},\dots,s_n)\in C_{n-1}(S)$, $1\le i\le n-1$, follows from $s(s_1,\dots,s_n)\in C_n(S)$, because $(s_1,\dots,s_is_{i+1},\dots,s_n)$ and $(s_1,\dots,s_n)$ belong to the components of $V_{n-1}(S)$ and $V_n(S)$, respectively, with the same index $f$. To prove that $ss_1(s_2,\dots,s_n)\in C_{n-1}(S)$, we make sure that $s\m_1 es_1\le e'$, where $e=s\m s$ and $e'=s_2\dots s_ns\m_n\dots s\m_2$. We know from $s(s_1,\dots,s_n)\in C_n(S)$ that $e\le s_1e's\m_1$. Then $s\m_1 es_1\le s\m_1 s_1e'$ and hence
		$$
		s\m_1 es_1\cdot e'=s\m_1 e(s_1s\m_1 s_1)e'=s\m_1 es_1\cdot s\m_1 s_1e'=s\m_1 es_1,
		$$
		as desired. 
		
		Thus, due to the fact that $\sigma'_{n-1}$ is additive on each component of $C_{n-1}(S)$, we have
		\begin{align}
		\sigma'_{n-1}\circ\partial'_n(s(s_1,\dots,s_n))&=(ss_1,s_2,\dots,s_n)\notag\\
		&\quad+\sum_{i=1}^{n-1}(-1)^i(s,s_1,\dots,s_is_{i+1},\dots,s_n)\notag\\
		&\quad+(-1)^n(s,s_1,\dots,s_{n-1}).\label{eq-sigma'_(n-1)partial'_n-final}
		\end{align}
		Adding \cref{eq-partial_(n+1)sigma'_n} and \cref{eq-sigma'_(n-1)partial'_n-final} we obtain $\lambda_s(s_1,\dots,s_n)$, which is $s(s_1,\dots,s_n)$ thanks to \cref{rem-lambda_s(t)=st}.
	\end{proof}
	
	\begin{prop}\label{C(S)-is-exact}
		The sequence $C(S)$ is exact.
	\end{prop}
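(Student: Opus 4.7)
The plan is to simply assemble the three preceding lemmas, which together supply exactly the two inclusions needed at each step of the sequence.

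First I would note that by \cref{lem-tau'-and-sigma'-satisfy-(ii)}, the maps $\sigma'_{-1}$ and $\sigma'_n$, $n\ge 0$, defined in \cref{eq-tau',eq-sigma'_0,eq-sigma'_n} satisfy the homotopy identities \cref{eq-epsilon'tau'=id,eq-partial'_(n+1)sigma'_n+sigma'_(n-1)partial'_n=id}, and by construction they respect the $E(S)$-grading, so they fulfill the hypotheses of \cref{lem-ker(partial'_n)-in-im(partial'_(n+1))}. Applying that lemma yields surjectivity of $\partial'_0:C_0(S)\to\bb Z_S$ together with the inclusion $\ker\partial'_n\subseteq\im\partial'_{n+1}$ for every $n\ge 0$.

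For the reverse inclusion $\im\partial'_{n+1}\subseteq\ker\partial'_n$, I would invoke \cref{lem-partial'^2=0}, which provides $\partial'_n\circ\partial'_{n+1}=0$ for all $n\ge 0$. Combining the two inclusions gives $\ker\partial'_n=\im\partial'_{n+1}$, and together with surjectivity of $\partial'_0$ this is precisely the exactness of the sequence
\[
\cdots\to C_{n+1}(S)\overset{\partial'_{n+1}}{\to}C_n(S)\overset{\partial'_n}{\to}C_{n-1}(S)\to\cdots\to C_0(S)\overset{\partial'_0}{\to}\bb Z_S\to 0.
\]

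There is no real obstacle here: all the substantive work was done in establishing $\partial'_n\circ\partial'_{n+1}=0$ (via the translation to the classical bar-resolution identity) and in verifying the contracting-homotopy equations for the specific $\sigma'_n$. The proposition itself is a one-line assembly of those two facts through \cref{lem-ker(partial'_n)-in-im(partial'_(n+1))}.
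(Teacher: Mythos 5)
Your proof is correct and is exactly the paper's argument: the proposition is proved there by citing \cref{lem-ker(partial'_n)-in-im(partial'_(n+1)),lem-partial'^2=0,lem-tau'-and-sigma'-satisfy-(ii)}, precisely the assembly you describe (one inclusion and surjectivity of $\partial'_0$ from the contracting homotopy via \cref{lem-ker(partial'_n)-in-im(partial'_(n+1))}, the other inclusion from $\partial'_n\circ\partial'_{n+1}=0$). Your additional remark that the $\sigma'_n$ respect the $E(S)$-grading is also verified in the paper, in the text preceding \cref{lem-tau'-and-sigma'-satisfy-(ii)}.
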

	\begin{proof}
		This follows from \cref{lem-ker(partial'_n)-in-im(partial'_(n+1)),lem-partial'^2=0,lem-tau'-and-sigma'-satisfy-(ii)}.
	\end{proof}
	
	\begin{rem}\label{rem-sigma'_n(C_n(S))-ne-C_(n+1)(S)}
		For any $n\ge 1$ the image $\sigma'_n(C_n(S))$ consists of those generators $(s_1,\dots,s_{n+1})$ of $C_{n+1}(S)$, for which $s\m_1 s_1\le s_2\dots s_{n+1}s\m_{n+1}\dots s\m_2$. For example, when $S$ is obtained from an inverse semigroup by adjoining identity $1_S$, the only $(n+1)$-tuple from $\sigma'_n(C_n(S))$ with $s_1=1_S$ is $(1_S,\dots,1_S)$. Hence, $\sigma'_n(C_n(S))$ generates a proper submodule of $C_{n+1}(S)$.
	\end{rem}
	
	We obtain the exactness of $D(S)$ as a consequence of the exactness of $C(S)$. To this end we introduce the epimorphisms of $S$-modules $\zeta_{-1}:\bb Z_S\to\bb Z_S$ and $\zeta_n:C_n(S)\to D_n(S)$, $n\ge 0$, by $\zeta_{-1}=\id_{\bb Z_S}$, $\zeta_0=\id_{C_0(S)}$ and
	\begin{align}\label{eq-zeta_n}
	\zeta_n(s_1,\dots,s_n)=
	\begin{cases}
	(s_1,\dots,s_n),                  & (s_1,\dots,s_n)\in W_n(S),\\
	0_{s_1\dots s_ns\m_n\dots s\m_1}, & (s_1,\dots,s_n)\not\in W_n(S)
	\end{cases}
	\end{align}
	for $n\ge 1$ and $(s_1,\dots,s_n)\in V_n(S)$. It follows that $\zeta_n$ is identity on $D_n(S)\subseteq C_n(S)$, $n\ge 0$.
	
	\begin{lem}\label{lem-zeta-morph-of-compl}
		For any $n\ge 0$ we have $\partial''_n\circ\zeta_n=\zeta_{n-1}\circ\partial'_n$.
	\end{lem}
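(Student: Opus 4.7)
The plan is to verify the identity on generators of $C_n(S)$ and extend by $S$-linearity. Since $\zeta_n$, $\partial'_n$ and $\partial''_n$ are all morphisms in $\Mod S$, it suffices to evaluate both sides on an element $(s_1,\dots,s_n)\in V_n(S)$ viewed inside $C_n(S)$. The cases $n=0$ and $n=1$ are immediate from the definitions: for $n=0$ the two differentials coincide and both $\zeta_{-1},\zeta_0$ are identities; for $n=1$ either $s\ne 1_S$, and $\zeta_1$ is the identity on $(s)$ while $\partial''_1(s)=\partial'_1(s)$, or $s=1_S$, and both $\zeta_1(1_S)$ and $\partial'_1(1_S)=1_S(\phantom{x})-(\phantom{x})$ vanish (using the identification $1_S(\phantom{x})=(\phantom{x})$ in $C_0(S)_{1_S}$).

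For $n\ge 2$ I would split according to whether $(s_1,\dots,s_n)\in W_n(S)$. If it does, then $\zeta_n(s_1,\dots,s_n)=(s_1,\dots,s_n)$, and comparing the two sides term by term via \cref{partial_n'} proceeds as follows. The outer summands $\lambda_{s_1}(s_2,\dots,s_n)$ and $(-1)^n(s_1,\dots,s_{n-1})$ automatically belong to $W_{n-1}(S)$ and survive unchanged in both computations. A middle summand $(-1)^i(s_1,\dots,s_is_{i+1},\dots,s_n)$ is replaced by the zero of $C_{n-1}(S)_e$, $e=s_1\dots s_ns_n\m\dots s_1\m$, by $\partial''_n$ exactly when $s_is_{i+1}=1_S$, and $\zeta_{n-1}$ performs the same replacement in the same component in precisely this case, the matching of indices being a routine inverse-semigroup cancellation.

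If $(s_1,\dots,s_n)\notin W_n(S)$, the left-hand side is zero, so I must show that $\zeta_{n-1}\circ\partial'_n(s_1,\dots,s_n)=0_e$. Set $I=\{i\mid s_i=1_S\}\ne\emptyset$. If $|I|\ge 2$, a short inspection of \cref{partial_n'} shows that every summand still carries a $1_S$ entry (in particular, for a middle summand to lose all $1_S$ entries one would need $I\subseteq\{j,j+1\}$ with $s_js_{j+1}\ne 1_S$, which is incompatible with $|I|\ge 2$), hence is killed by $\zeta_{n-1}$. If $I=\{i\}$, only two adjacent ``contraction'' summands escape the kernel of $\zeta_{n-1}$: for $i=1$ the survivors are $\lambda_{s_1}(s_2,\dots,s_n)=(s_2,\dots,s_n)$ and $-(s_1s_2,s_3,\dots,s_n)=-(s_2,\dots,s_n)$; for $i=n$ they are $(-1)^{n-1}(s_1,\dots,s_{n-2},s_{n-1}s_n)=(-1)^{n-1}(s_1,\dots,s_{n-1})$ and $(-1)^n(s_1,\dots,s_{n-1})$; for $2\le i\le n-1$ they are $(-1)^{i-1}$ and $(-1)^i$ times the tuple obtained from $(s_1,\dots,s_n)$ by omitting $s_i$. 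In each case the surviving pair cancels by the opposite signs.

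The main obstacle is the last case analysis: one must simultaneously verify that every summand of $\partial'_n(s_1,\dots,s_n)$ retaining a $1_S$ entry is annihilated by $\zeta_{n-1}$, and that the few summands that escape this annihilation always pair up with opposite signs and in the same component of $C_{n-1}(S)$. The rest of the argument reduces to checking that two explicit zero-replacement rules (the one built into $\partial''_n$, and the one implicit in $\zeta_{n-1}$) fire on exactly the same set of summands and with matching component indices.
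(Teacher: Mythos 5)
Your proof is correct and takes essentially the same approach as the paper's: both reduce to the $S$-module generators $(s_1,\dots,s_n)\in V_n(S)$, dispose of $n=0,1$ directly, note that on $W_n(S)$ the identity is just the definition of $\partial''_n$ rewritten via $\zeta_{n-1}$, and for tuples outside $W_n(S)$ show that every summand of $\partial'_n$ retaining a $1_S$ entry is annihilated by $\zeta_{n-1}$ while the two surviving contraction terms cancel in the same component with opposite signs. Your disjoint split into $|I|\ge 2$ and $|I|=\{i\}$ (with $i=1$, $i=n$, $2\le i\le n-1$) merely repackages the paper's overlapping cases $s_1=1_S$, $s_i=1_S$ for $2\le i\le n-1$, and $s_n=1_S$.
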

	\begin{proof}
		The case $n=0$ is trivial: $\partial''_0\circ\zeta_0=\partial''_0=\partial'_0=\zeta_{-1}\circ\partial'_0$. 
		
		For $n=1$ we need to show that $\partial''_1\circ\zeta_1=\zeta_0\circ\partial'_1=\partial'_1$. Take an arbitrary generator $(s)\in V_1(S)$. If $s\ne 1_S$, then $(s)\in W_1(S)$, so $\partial''_1\circ\zeta_1(s)=\partial''_1(s)=\partial'_1(s)$ by the definitions of $\zeta_1$ and $\partial''_1$. If $s=1_S$, i.\,e. $(s)\not\in W_1(S)$, then $\partial''_1\circ\zeta_1(s)=\partial''_1(0_{1_S})=0_{1_S}$, and $\partial'_1(s)=1_S(\phantom{x})-(\phantom{x})=1_S(\phantom{x})-1_S(\phantom{x})=0_{1_S}$.
		
		If $n\ge 2$ and $(s_1,\dots,s_n)\in W_n(S)$, then one needs to prove that
		\begin{align*}
		\partial''_n(s_1,\dots,s_n)=\zeta_{n-1}\circ\partial'_n(s_1,\dots,s_n).
		\end{align*}
		It is simply the definition of $\partial''_n$ rewritten in terms of $\partial'_n$ and $\zeta_{n-1}$.
		
		If $n\ge 2$ and $(s_1,\dots,s_n)\not\in W_n(S)$, then the desired equality reduces to
		\begin{align*}
		\zeta_{n-1}\circ\partial'_n(s_1,\dots,s_n)=0_{s_1\dots s_ns\m_n\dots s\m_1}.
		\end{align*}
		Consider three possible cases.
		
		(a) $s_1=1_S$. Then
		\begin{align*}
		\zeta_{n-1}\circ\partial'_n(s_1,\dots,s_n)&=\zeta_{n-1}(\lambda_{1_S}(s_2,\dots,s_n)-(1_S\cdot s_2,\dots,s_n))\\
		&\quad+\sum_{i=2}^{n-1}(-1)^i\zeta_{n-1}(1_S,s_2,\dots,s_is_{i+1},\dots,s_n)\\
		&\quad+(-1)^n\zeta_{n-1}(1_S,s_2,\dots,s_{n-1}).
		\end{align*}
		The difference $\lambda_{1_S}(s_2,\dots,s_n)-(1_S\cdot s_2,s_3,\dots,s_n)$ is $0_{s_2\dots s_ns\m_n\dots s\m_2}$, and it is mapped by $\zeta_{n-1}$ to itself as an element of $D_n(S)$. All the other terms under the sign of $\zeta_{n-1}$ are also mapped to zeros of the corresponding components of $D_n(S)$, since they contain $1_S$. Thus, the sum is $0_{s_2\dots s_ns\m_n\dots s\m_2}=0_{s_1\dots s_ns\m_n\dots s\m_1}$.
		
		(b) $s_i=1_S$ for some $2\le i\le n-1$. In this case
		\begin{align}
		\zeta_{n-1}\circ\partial'_n(s_1,\dots,s_n)&=\zeta_{n-1}(\lambda_{s_1}(s_2,\dots,1_S,\dots,s_n))\notag\\
		&\quad+\sum_{j=1}^{i-2}(-1)^j\zeta_{n-1}(s_1,\dots,s_js_{j+1},\dots,1_S,\dots,s_n)\notag\\
		&\quad+(-1)^{i-1}\zeta_{n-1}(s_1,\dots,s_{i-2},s_{i-1}\cdot 1_S,s_{i+1},\dots,s_n)\label{eq-zeta(s_1...s_(i-1)1...s_n)}\\
		&\quad+(-1)^i\zeta_{n-1}(s_1,\dots,s_{i-1},1_S\cdot s_{i+1},s_{i+2},\dots,s_n)\label{eq-zeta(s_1...1s_(i+1)...s_n)}\\
		&\quad+\sum_{j=i+1}^{n-1}(-1)^j\zeta_{n-1}(s_1,\dots,1_S,\dots,s_js_{j+1},\dots,s_n)\notag\\
		&\quad+(-1)^n\zeta_{n-1}(s_2,\dots,1_S,\dots,s_{n-1}).\notag
		\end{align}
		The summands \cref{eq-zeta(s_1...s_(i-1)1...s_n)} and \cref{eq-zeta(s_1...1s_(i+1)...s_n)} differ only by the sign, all the other summands are zero by the definition of $\zeta_{n-1}$. 
		
		(c)  $s_n=1_S$. Then
		\begin{align*}
		\zeta_{n-1}\circ\partial'_n(s_1,\dots,s_n)&=\zeta_{n-1}(\lambda_{s_1}(s_2,\dots,1_S))\\
		&\quad+\sum_{i=1}^{n-2}(-1)^i\zeta_{n-1}(s_1,\dots,s_is_{i+1},\dots,1_S)\\
		&\quad+(-1)^{n-1}\zeta_{n-1}((s_1,\dots,s_{n-1}\cdot 1_S)-(s_1,\dots,s_{n-1})),
		\end{align*}
		which is clearly zero.
	\end{proof}
	
	\begin{cor}\label{cor-partial''^2=0}
		For all $n\ge 0$ the composition $\partial''_n\circ\partial''_{n+1}$ is zero.
	\end{cor}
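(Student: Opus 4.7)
The plan is to reduce the claim to the already-established identity $\partial'_n\circ\partial'_{n+1}=0$ from \cref{lem-partial'^2=0} by transporting it through the chain map $\zeta=(\zeta_n)$ from $C(S)$ to $D(S)$ whose compatibility with the differentials is given by the just-proved \cref{lem-zeta-morph-of-compl}. The key observation is that $\zeta_n$ is the identity on $D_n(S)\subseteq C_n(S)$ (as noted right before \cref{lem-zeta-morph-of-compl}), so every element of $D_{n+1}(S)$ is in the image of $\zeta_{n+1}$; indeed, each $d\in D_{n+1}(S)$ satisfies $\zeta_{n+1}(d)=d$.

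Concretely, I would fix $n\ge 0$ and $d\in D_{n+1}(S)$, and then write
\begin{align*}
\partial''_n\circ\partial''_{n+1}(d)
&=\partial''_n\circ\partial''_{n+1}\circ\zeta_{n+1}(d)\\
&=\partial''_n\circ\zeta_n\circ\partial'_{n+1}(d)\\
&=\zeta_{n-1}\circ\partial'_n\circ\partial'_{n+1}(d)\\
&=\zeta_{n-1}(0)=0,
\end{align*}
where the first equality uses $\zeta_{n+1}|_{D_{n+1}(S)}=\id$, the second and third use \cref{lem-zeta-morph-of-compl} applied at levels $n+1$ and $n$, and the fourth uses \cref{lem-partial'^2=0} together with the fact that $\zeta_{n-1}$ sends zeros to zeros (being an $S$-morphism). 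Since $d$ was arbitrary, the composition $\partial''_n\circ\partial''_{n+1}$ vanishes on all of $D_{n+1}(S)$.

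There is no real obstacle: the content has already been packaged into \cref{lem-partial'^2=0,lem-zeta-morph-of-compl}, and what remains is a one-line diagram chase. The only thing to take care of is the boundary case $n=0$, but there \cref{lem-zeta-morph-of-compl} still applies verbatim (with $\zeta_{-1}=\id_{\bb Z_S}$ as defined), so the same three-step calculation goes through without modification, showing that $\partial''_0\circ\partial''_1=0$ as well.
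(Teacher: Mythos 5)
Your proof is correct and follows essentially the same route as the paper: both reduce $\partial''_n\circ\partial''_{n+1}=0$ to \cref{lem-partial'^2=0} by transporting through the chain map of \cref{lem-zeta-morph-of-compl} applied twice. The only cosmetic difference is that the paper cancels the epimorphism $\zeta_{n+1}$ at the level of maps, whereas you evaluate pointwise using the fact that $\zeta_{n+1}$ restricts to the identity on $D_{n+1}(S)$ — which is precisely why $\zeta_{n+1}$ is surjective, so the two arguments coincide.
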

    \begin{proof}
     Indeed, consider the composition $(\partial''_n\circ\partial''_{n+1})\circ\zeta_{n+1}$. Using \cref{lem-zeta-morph-of-compl} twice, we obtain
     \begin{align*}
     \partial''_n\circ(\partial''_{n+1}\circ\zeta_{n+1})=(\partial''_n\circ\zeta_n)\circ\partial'_{n+1}=\zeta_{n-1}\circ(\partial'_n\circ\partial'_{n+1}),
     \end{align*}
     which is zero by \cref{lem-partial'^2=0}. The result now follows from surjectivity of $\zeta_{n+1}$.
    \end{proof}

	We proceed with the construction of the corresponding maps $\sigma''_n$, $n\ge -1$.
	
	\begin{lem}\label{lem-sigma''_n}
		There is a (uniquely defined) collection of functions $\sigma''_{-1}:\bb Z_S\to D_0(S)$, $\sigma''_n:D_n(S)\to D_{n+1}(S)$, $n\ge 0$, satisfying
		\begin{align}\label{eq-sigma''-zeta=zeta-sigma'}
		\sigma''_n\circ\zeta_n=\zeta_{n+1}\circ\sigma'_n
		\end{align}
		for all $n\ge -1$. Moreover, it follows that $\sigma''_n$ is a homomorphism of abelian groups when restricted to a group component of the corresponding module.
	\end{lem}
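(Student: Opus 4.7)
The plan is to construct $\sigma''_n$ as the unique factorization of $\zeta_{n+1}\circ\sigma'_n$ through the epimorphism $\zeta_n$. Since $\zeta_n$ is identity on $D_n(S)\subseteq C_n(S)$ and kills generators of $C_n(S)$ whose tuple has an entry $1_S$, it is surjective componentwise. Hence for any candidate $\sigma''_n$ satisfying \cref{eq-sigma''-zeta=zeta-sigma'}, given $d\in D_n(S)_e$, write $d=\zeta_n(c)$ for some $c\in C_n(S)_e$ (for instance $c=d$ itself) and observe that $\sigma''_n(d)=\zeta_{n+1}(\sigma'_n(c))$ is forced. This proves uniqueness at once.

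For existence we must show that $\zeta_{n+1}\circ\sigma'_n$ factors through $\zeta_n$, i.e.\ that $\sigma'_n(\ker\zeta_n)\subseteq\ker\zeta_{n+1}$. The cases $n=-1,0$ are trivial since $\zeta_{-1}=\id_{\bb Z_S}$ and $\zeta_0=\id_{C_0(S)}$, so $\ker\zeta_n=0$; here one simply sets $\sigma''_{-1}=\sigma'_{-1}$ and $\sigma''_0=\zeta_1\circ\sigma'_0$. For $n\ge 1$, the kernel of $\zeta_n$ restricted to the component $C_n(S)_e$ is the free abelian subgroup spanned by those generators $s(s_1,\dots,s_n)$ for which $(s_1,\dots,s_n)\notin W_n(S)$, that is, for which some $s_i$ equals $1_S$. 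Applying $\sigma'_n$ to such a generator, we get by \cref{eq-sigma'_n} the element $(s,s_1,\dots,s_n)$, which still contains $1_S$ among $s_1,\dots,s_n$ and therefore lies outside $W_{n+1}(S)$; consequently $\zeta_{n+1}$ sends it to the zero of the appropriate component. Since $\sigma'_n$ is additive on each group component by \cref{sigma'_n|C_n(S)_e}, this extends to every element of $\ker\zeta_n$, giving the desired inclusion.

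Therefore we may define $\sigma''_n(d)=\zeta_{n+1}(\sigma'_n(c))$ for any $c\in C_n(S)$ with $\zeta_n(c)=d$, and \cref{eq-sigma''-zeta=zeta-sigma'} is satisfied tautologically. The additivity of the restriction of $\sigma''_n$ to a group component $D_n(S)_e$ is an immediate consequence of the construction: for $d,d'\in D_n(S)_e$ pick preimages $c,c'\in C_n(S)_e$ under $\zeta_n$, use that $\zeta_n,\zeta_{n+1}$ are morphisms of abelian groups on each component (being $S$-morphisms) and that $\sigma'_n$ is additive on each component by \cref{sigma'_n|C_n(S)_e} to compute $\sigma''_n(d+d')=\zeta_{n+1}(\sigma'_n(c+c'))=\zeta_{n+1}(\sigma'_n(c))+\zeta_{n+1}(\sigma'_n(c'))=\sigma''_n(d)+\sigma''_n(d')$.

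The single real point that needs checking is the factorization condition $\sigma'_n(\ker\zeta_n)\subseteq\ker\zeta_{n+1}$ for $n\ge 1$, and this reduces to the simple combinatorial observation that $\sigma'_n$ prepends an entry to the tuple without removing any existing $1_S$, so degeneracy is preserved. Everything else is formal bookkeeping with the universal property of the quotient.
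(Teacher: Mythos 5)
Your proof is correct and is essentially the paper's argument in universal-property clothing: since $\zeta_n$ restricts to the identity on $D_n(S)$, your factorization through $\zeta_n$ coincides with the paper's direct (and uniqueness-forcing) definition $\sigma''_n=\zeta_{n+1}\circ\sigma'_n|_{D_n(S)}$, and your kernel inclusion $\sigma'_n(\ker\zeta_n)\subseteq\ker\zeta_{n+1}$ is verified by exactly the computation the paper performs on the generators $s(s_1,\dots,s_n)$ with some $s_i=1_S$, namely that prepending an entry via $\sigma'_n$ preserves degeneracy, so $\zeta_{n+1}$ kills the image. Your componentwise additivity argument likewise matches the paper's, so there is nothing to fix.
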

	\begin{proof}
		Since $\zeta_{-1}=\id_{\bb Z_S}$ and $\zeta_n$ is identity on $D_n(S)$, we immediately obtain that for $n=-1$ the equality \cref{eq-sigma''-zeta=zeta-sigma'} becomes $\sigma''_{-1}=\sigma'_{-1}$ (so that $\sigma''_{-1}|_{(\bb Z_S)_e}$ is automatically a homomorphism as $\sigma'_{-1}|_{(\bb Z_S)_e}$ is), and for $n\ge 0$ it is equivalent on $D_n(S)\subseteq C_n(S)$ to the fact that $\sigma''_n=\zeta_{n+1}\circ\sigma'_n|_{D_n(S)}$. It follows from $D_n(S)_e\subseteq C_n(S)_e$ that $\sigma'_n|_{D_n(S)_e}:D_n(S)_e\to C_{n+1}(S)_e$ is a homomorphism as the restriction of the homomorphism $\sigma'_n|_{C_n(S)_e}$ to $D_n(S)_e$. Furthermore, $\zeta_{n+1}$, being a morphism of $S$-modules, restricts to a homomorphism $\zeta_{n+1}|_{C_{n+1}(S)_e}:C_{n+1}(S)_e\to D_{n+1}(S)_e$. Hence, $\sigma''_n|_{D_n(S)_e}=\zeta_{n+1}|_{C_{n+1}(S)_e}\circ\sigma'_n|_{D_n(S)_e}$ is a homomorphism $D_n(S)_e\to D_{n+1}(S)_e$.
		
		Now we prove that \cref{eq-sigma''-zeta=zeta-sigma'} holds on any generator of $C_n(S)_e$ which does not belong to $D_n(S)_e$, $n\ge 0$, $e\in E(S)$. If $n=0$, then there is nothing to be proved, because $D_0(S)=C_0(S)$. If $n\ge 1$, then $\zeta_n$ is zero on such a generator $s(s_1,\dots,s_n)$, so \cref{eq-sigma''-zeta=zeta-sigma'} transforms into
		\begin{align*}
		\zeta_{n+1}\circ\sigma'_n(s(s_1,\dots,s_n))=0_{ss\m},
		\end{align*}
		when $s_i=1_S$ for some $1\le i\le n$. The latter is a straightforward consequence of \cref{eq-sigma'_n,eq-zeta_n} (see also \cref{eq-sf=s}).
	\end{proof}
	
	\begin{cor}\label{cor-sigma''-contr-homotopy}
		One has
		\begin{align}
		\partial''_0\circ\sigma''_{-1}&=\id_{\bb Z_S};\label{eq-partial''sigma''_(-1)=id}\\
		\partial''_{n+1}\circ\sigma''_n+\sigma''_{n-1}\circ\partial''_n&=\id_{D_n(S)},\ \ n\ge 0 \label{eq-partial''_(n+1)sigma''_n+sigma''_(n-1)partial''_n=id}.		
		\end{align}
	\end{cor}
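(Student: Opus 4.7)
The plan is to deduce both identities from their primed counterparts \cref{eq-epsilon'tau'=id,eq-partial'_(n+1)sigma'_n+sigma'_(n-1)partial'_n=id}, exploiting the two commuting-square relations $\sigma''_n\circ\zeta_n=\zeta_{n+1}\circ\sigma'_n$ established in \cref{lem-sigma''_n} and $\partial''_n\circ\zeta_n=\zeta_{n-1}\circ\partial'_n$ from \cref{lem-zeta-morph-of-compl}, together with the fact that each $\zeta_n$ is surjective (and, in the extreme cases, that $\zeta_{-1}$ and $\zeta_0$ are identities).

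For \cref{eq-partial''sigma''_(-1)=id}, the defining relation \cref{eq-sigma''-zeta=zeta-sigma'} with $n=-1$ forces $\sigma''_{-1}=\sigma'_{-1}$ because $\zeta_{-1}=\id_{\bb Z_S}$ and $\zeta_0=\id_{C_0(S)}$; since $\partial''_0=\partial'_0$ by construction, the equality reduces at once to \cref{eq-epsilon'tau'=id}.

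For \cref{eq-partial''_(n+1)sigma''_n+sigma''_(n-1)partial''_n=id} I would precompose the left-hand side with $\zeta_n\colon C_n(S)\to D_n(S)$. Using \cref{lem-sigma''_n} to rewrite $\sigma''_n\circ\zeta_n$ as $\zeta_{n+1}\circ\sigma'_n$, and \cref{lem-zeta-morph-of-compl} to rewrite $\partial''_n\circ\zeta_n$ as $\zeta_{n-1}\circ\partial'_n$, and then applying \cref{lem-zeta-morph-of-compl} (respectively \cref{lem-sigma''_n} with index $n-1$) to pull the $\zeta$'s outwards, the expression becomes
\[
\zeta_n\circ\bigl(\partial'_{n+1}\circ\sigma'_n+\sigma'_{n-1}\circ\partial'_n\bigr)=\zeta_n\circ\id_{C_n(S)}=\zeta_n,
\]
by \cref{eq-partial'_(n+1)sigma'_n+sigma'_(n-1)partial'_n=id}. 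Since the right-hand side $\id_{D_n(S)}\circ\zeta_n$ equals $\zeta_n$ as well, and $\zeta_n$ is an epimorphism, we may cancel it and conclude.

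The argument has essentially no obstruction beyond careful bookkeeping: the main point to check is that the low-dimensional cases $n=0,1$ remain covered, where $\zeta_{-1}$ or $\zeta_0$ is an identity and the intertwining squares trivialise; one simply observes that the formal manipulation still produces the correct result. Note that the approach bypasses any direct use of the somewhat delicate recursive definition of $\partial''_n$—all of the ``normalization'' content has already been absorbed into the map $\zeta$, so the chain-homotopy property transfers from $C(S)$ to $D(S)$ purely formally.
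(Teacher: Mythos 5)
Your proposal is correct and follows essentially the same route as the paper: precomposing the homotopy identity with the epimorphism $\zeta_n$, shuttling the $\zeta$'s through the two intertwining relations of \cref{lem-zeta-morph-of-compl,lem-sigma''_n} to reduce to \cref{eq-partial'_(n+1)sigma'_n+sigma'_(n-1)partial'_n=id}, and then cancelling $\zeta_n$ by surjectivity. Your treatment of \cref{eq-partial''sigma''_(-1)=id} via $\sigma''_{-1}=\sigma'_{-1}$ and $\partial''_0=\partial'_0$ likewise matches the paper's one-line reduction to \cref{eq-epsilon'tau'=id}.
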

    \begin{proof}
    Equality \cref{eq-partial''sigma''_(-1)=id} is simply \cref{eq-epsilon'tau'=id}. To establish \cref{eq-partial''_(n+1)sigma''_n+sigma''_(n-1)partial''_n=id}, use \cref{lem-zeta-morph-of-compl,eq-sigma''-zeta=zeta-sigma',eq-partial'_(n+1)sigma'_n+sigma'_(n-1)partial'_n=id} to get
    \begin{align*}
    (\partial''_{n+1}\circ\sigma''_n+\sigma''_{n-1}\circ\partial''_n)\circ\zeta_n&=\partial''_{n+1}\circ(\sigma''_n\circ\zeta_n)+\sigma''_{n-1}\circ(\partial''_n\circ\zeta_n)\\
    &=(\partial''_{n+1}\circ\zeta_{n+1})\circ\sigma'_n+(\sigma''_{n-1}\circ\zeta_{n-1})\circ\partial'_n\\
    &=\zeta_n\circ(\partial'_{n+1}\circ\sigma'_n+\sigma'_{n-1}\circ\partial'_n)\\
    &=\zeta_n.
    \end{align*}
    It remains to ``cancel'' the epimorphism $\zeta_n$.
    \end{proof}

	\begin{prop}\label{D(S)-is-exact}
		The sequence $D(S)$ is exact.
	\end{prop}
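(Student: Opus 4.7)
The proof plan is straightforward: we mimic exactly the argument used for Proposition \ref{C(S)-is-exact}, since all the required ingredients for $D(S)$ have already been assembled. Specifically, \cref{cor-partial''^2=0} shows that $D(S)$ is a chain complex, while \cref{cor-sigma''-contr-homotopy} provides a contracting homotopy $\sigma''$ for $D(S)$ that satisfies the complete analog of conditions \cref{sigma'_n|C_n(S)_e,sigma'_n-homotopy} (the ``restriction to components is a group homomorphism'' part being guaranteed by the last assertion of \cref{lem-sigma''_n}).

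The first step is to observe that the statement and proof of \cref{lem-ker(partial'_n)-in-im(partial'_(n+1))} are entirely formal: they use only that $\sigma'$ consists of $E(S)$-maps whose restrictions to components are additive and that the homotopy identities \cref{eq-epsilon'tau'=id,eq-partial'_(n+1)sigma'_n+sigma'_(n-1)partial'_n=id} hold. Replacing every primed symbol with the corresponding double-primed one, we obtain at once that $\partial''_0$ is surjective and $\ker\partial''_n\subseteq\im\partial''_{n+1}$ for all $n\ge 0$.

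Combining this inclusion with \cref{cor-partial''^2=0}, which yields the reverse inclusion $\im\partial''_{n+1}\subseteq\ker\partial''_n$, we conclude that $\ker\partial''_n=\im\partial''_{n+1}$ at every degree, so $D(S)$ is exact. There is no real obstacle here; the only thing worth stating explicitly in the write-up is that \cref{lem-ker(partial'_n)-in-im(partial'_(n+1))} applies verbatim to $D(S)$ because its proof never uses anything specific to $C(S)$ beyond the formal homotopy identities that were transported to $D(S)$ in \cref{cor-sigma''-contr-homotopy}.
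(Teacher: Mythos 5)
Your proposal is correct and is essentially identical to the paper's own proof, which likewise derives the exactness of $D(S)$ by combining \cref{cor-partial''^2=0}, \cref{cor-sigma''-contr-homotopy}, and \cref{lem-ker(partial'_n)-in-im(partial'_(n+1))}. Your explicit remark that \cref{lem-ker(partial'_n)-in-im(partial'_(n+1))} is formal and applies verbatim to the double-primed maps (with component-wise additivity of $\sigma''_n$ supplied by \cref{lem-sigma''_n}) is exactly the implicit content of the paper's one-line proof.
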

	\begin{proof}
		This follows from \cref{cor-partial''^2=0,cor-sigma''-contr-homotopy,lem-ker(partial'_n)-in-im(partial'_(n+1))}.
	\end{proof}
	
	\subsection{A connection between \texorpdfstring{$H^n(S,A)$}{Hn(S,A)} and \texorpdfstring{$H^n(S^1,A^1)$}{Hn(S1,A1)}}
	Let $A$ be an $S$-module. By adjoining identities $1_S$ and $1_A$ to $S$ and $A$ we obtain the $S^1$-module $A^1$ (see~\cite[p. 285]{Lausch}, where one uses the additive notation). In particular, $1_S$ acts on $A^1$ trivially, and $\lambda_s(1_A)=\alpha(s s\m)$.
	
	\begin{prop}\label{H^n(S_A)=H^n(S^1_A^1)}
		Let $S$ be an inverse monoid. For any $S$-module $A$ and for all $n\ge 2$ we have
		$$
		H^n(S,A)\cong H^n(S^1,A^1).
		$$
	\end{prop}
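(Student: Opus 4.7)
The plan is to realize both cohomologies on the free resolutions $C(S)$ and $C(S^1)$ of $\bb Z_S$ and $\bb Z_{S^1}$ introduced in \cref{sec-res-C-and-D} and to compare the resulting cochain complexes by restriction of cochains.

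First, identify the cochains. By freeness of $C_n(S)$ and $C_n(S^1)$, an $n$-cochain in $\Hom_{S^1}(C_n(S^1),A^1)$ is exactly an $E(S^1)$-map $V_n(S^1)\to A^1$, and analogously for the $S$-side with $A$. Since $V_n(S)\subseteq V_n(S^1)$ (the new generators are precisely tuples in which some $s_i=1_{S^1}$) and since $A^1_e=A_e$ for every $e\in E(S)$, restriction defines a homomorphism
\[
r^n\colon\Hom_{S^1}(C_n(S^1),A^1)\to\Hom_S(C_n(S),A).
\]
Using formula \cref{partial_n'} for $\partial'_n$, which is the same for both resolutions, and the observation that $s_is_{i+1}\in S$ whenever $s_i,s_{i+1}\in S$, one verifies that $r^\bullet$ commutes with the coboundaries. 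Surjectivity of $r^\bullet$ is immediate: extend any cochain by zero on tuples containing $1_{S^1}$.

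Letting $K^\bullet=\ker r^\bullet$ be the subcomplex of cochains vanishing on $V_n(S)$, the resulting short exact sequence
\[
0\to K^\bullet\to\Hom_{S^1}(C_\bullet(S^1),A^1)\xrightarrow{r^\bullet}\Hom_S(C_\bullet(S),A)\to 0
\]
yields a long exact sequence in cohomology. The desired isomorphism for $n\geq 2$ will follow once one proves that $H^n(K^\bullet)=0$ for all $n\geq 2$.

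This acyclicity of $K^\bullet$ above degree one is the main obstacle. I would construct an explicit contracting homotopy $h^n\colon K^n\to K^{n-1}$ by a ``prepend $1_{S^1}$'' trick, setting $h(f)(s_1,\ldots,s_{n-1})=f(1_{S^1},s_1,\ldots,s_{n-1})$ when at least one $s_i$ equals $1_{S^1}$ and zero otherwise, so that $h(f)\in K^{n-1}$. Exploiting that $1_{S^1}$ is the two-sided identity of $S^1$ and that $\lambda_{1_{S^1}}=\id_{A^1}$, the calculation of $(dh+hd)(f)$ on a generator in $V_n(S^1)\setminus V_n(S)$ telescopes to $f$ by the usual cancellations of the bar differential. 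The delicate point is the ``discard on $V_{n-1}(S)$'' convention in the definition of $h$: the residual correction terms it introduces must be annihilated by the hypothesis that $f\in K^n$ vanishes on $V_n(S)$, and it is precisely this combinatorial bookkeeping—including the distinction between $1_S$ (still an ordinary element of $S^1$) and $1_{S^1}$—that forces the restriction $n\geq 2$.
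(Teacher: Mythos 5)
Your overall architecture — restrict cochains from $S^1$ to $S$, get a short exact sequence of complexes, and reduce the proposition to acyclicity of the kernel $K^\bullet$ in degrees $\ge 2$ — is coherent in degrees $\ge 1$ (your extension-by-zero is in fact the dual of the chain map $\zeta_n$ of \cref{lem-zeta-morph-of-compl}, so the sequence even splits as a sequence of complexes). But the step carrying all the weight, the contracting homotopy, fails. Test it already at $n=2$: take $f\in K^2$ and evaluate $(\delta h+h\delta)f$ at a generator $(s,1_{S^1})$ with $s\in S$. Writing $\delta$ for the coboundary \cref{eq-delta_2^nf} and using that $\lambda_{1_{S^1}}=\id$ and that $(hf)(s)$ is killed by your truncation, one gets
\begin{align*}
\delta(hf)(s,1_{S^1})&=\lambda_s\bigl((hf)(1_{S^1})\bigr)\,(hf)(s\cdot 1_{S^1})\m\,(hf)(s)=\lambda_s\bigl(f(1_{S^1},1_{S^1})\bigr)=\alpha(ss\m),\\
h(\delta f)(s,1_{S^1})&=(\delta f)(1_{S^1},s,1_{S^1})=f(s,1_{S^1})f(s,1_{S^1})\m f(1_{S^1},s)f(1_{S^1},s)\m=\alpha(ss\m),
\end{align*}
where the first line uses that $f(1_{S^1},1_{S^1})$ lies in the trivial component $A^1_{1_{S^1}}=\{1_A\}$. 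So $(\delta h+h\delta)f(s,1_{S^1})$ is the identity of $A_{ss\m}$, whereas the homotopy identity demands $f(s,1_{S^1})$ — and this value is an arbitrary element of $A_{ss\m}$, since $K^2$ only constrains $f$ on tuples from $V_2(S)$. The same leftover occurs in every degree whenever the first entry lies in $S$ (e.g.\ at $(s_1,1_{S^1},s_3)$ one is left with $\lambda_{s_1}(f(1_{S^1},1_{S^1},s_3))$ instead of $f(s_1,1_{S^1},s_3)$): prepending $1_{S^1}$ only peels the identity off the first slot. Conceptually, your $h$ is the dual of Lausch's $\sigma'_n$ from \cref{eq-sigma'_n}, which is merely an $E(S^1)$-map and not a morphism in $\Mod{S^1}$; the chain-level contraction therefore does not dualize to cochains — if it did, it would make all of $\Hom(C_\bullet(S^1),A^1)$ acyclic and force $H^n(S^1,A^1)=0$, which is absurd. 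So the acyclicity of $K^\bullet$ is not established, and no one-step homotopy of this shape can establish it.

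Two further points. First, a smaller error: $r^0$ is not surjective, since any morphism $C_0(S^1)\to A^1$ must send the empty tuple into $A^1_{1_{S^1}}=\{1_A\}$ and hence each $s(\phantom{x})$ to $\alpha(ss\m)$, so $\Hom(C_0(S^1),A^1)$ is trivial while $\Hom(C_0(S),A)\cong\cU A$ by \cref{H^0(S_A)}; your short exact sequence only exists in degrees $\ge 1$ (harmless for $n\ge2$ after truncation, but it must be said). Second, the paper's route is shorter and avoids your problem entirely: it proves once, at the chain level inside $\Mod{S^1}$ (via $\zeta_n$ and $\sigma''_n$), that the normalized complex $D(S^1)$ is also a free resolution of $\bb Z_{S^1}$ (\cref{C(S)-is-exact,D(S)-is-exact}), and then simply observes $W_n(S^1)_e=V_n(S)_e$ for $n\ge1$, $e\in E(S)$, so that $\Hom(D_n(S^1),A^1)\cong\Hom(C_n(S),A)$ as complexes in degrees $\ge1$ — no long exact sequence needed. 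If you want to salvage your architecture, the correct finish is in the same spirit: $\zeta:C(S^1)\to D(S^1)$ is a chain map between projective resolutions of $\bb Z_{S^1}$ lifting the identity, hence a homotopy equivalence in $\Mod{S^1}$ by the comparison theorem, so its dual $\zeta^*$ (your extension-by-zero) is a quasi-isomorphism, and the splitting then \emph{yields} $H^n(K^\bullet)=0$ for $n\ge2$ — rather than the vanishing of $H^n(K^\bullet)$ being provable directly by your $h$. Finally, your closing diagnosis is off: the restriction $n\ge2$ has nothing to do with bookkeeping in the homotopy; it comes from degree $0$, where the two cochain groups differ ($\cU A$ versus the trivial group), so that $H^0$ and $H^1$ genuinely disagree, cf.\ \cref{H^0(S_A)-ne-H^0(S^1_A^1)}.
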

	\begin{proof}
		We see that $W_n(S^1)_e=V_n(S)_e$ for all $n\ge 1$ and $e\in E(S)$. Moreover, $W_n(S^1)_{1_S}=\emptyset$. Hence, $D_n(S^1)_e=C_n(S)_e$ and $D_n(S^1)_{1_S}$ is the trivial group $\{0_{1_S}\}$ for such $n$ and $e$. This implies that there is an isomorphism between $\Hom(C_n(S),A)$ and $\Hom(D_n(S^1),A^1)$ (namely, a morphism $f:C_n(S)\to A$ extends to $\bar f:D_n(S^1)\to A^1$ by $\bar f(0_{1_S})=1_A$). It is clearly an isomorphism of the complexes of abelian groups:
		\begin{align}\label{eq-C(S)-isom-D(S)}
		\begin{tikzpicture}[node distance=4cm, auto]
		\node (C_1) {$\Hom(C_1(S),A)$};
		\node (C_2) [left of=C_1] {$\Hom(C_2(S),A)$};
		\node (dots_C_2) [xshift=1.1cm, left of=C_2] {$\dots$};
		\node (D_1) [yshift=2.5cm, below of=C_1]{$\Hom(D_1(S^1),A^1)$};
		\node (D_2) [left of=D_1] {$\Hom(D_2(S^1),A^1)$};
		\node (dots_D_2) [xshift=1.1cm, left of=D_2] {$\dots$};
		\draw[->] (C_1) to node[above] {$\delta_1^1$} (C_2);
		\draw[->] (C_2) to node[above] {$\delta_1^2$} (dots_C_2);
		\draw[->] (D_1) to node[above] {$\delta_2^1$} (D_2);
		\draw[->] (D_2) to node[above] {$\delta_2^2$} (dots_D_2);
		\draw[-] (C_1) to node {$\wr$} (D_1);
		\draw[-] (C_2) to node {$\wr$} (D_2);
		\end{tikzpicture}
		\end{align}
		where $\delta_1^n(f)=f\circ\partial'_{n+1}$ and $\delta_2^n(g)=g\circ\partial''_{n+1}$ for $n\ge 1$, $f\in\Hom(C_n(S),A)$ and $g\in\Hom(D_n(S^1),A^1)$. 
	\end{proof}
	
	Observe that $\Hom(D_n(S^1),A^1)$, $n\ge 1$, can be identified with the abelian group of functions
	$$
	\{f:S^n\to A\mid f(s_1,\dots,s_n)\in A_{\alpha(s_1\dots s_ns\m_n\dots s\m_1)}\}
	$$
	under the coordinate-wise multiplication, which we denote by $C^n(S^1,A^1)$. Under this identification $\delta_2^n$ becomes a homomorphism, which sends $f\in C^n(S^1,A^1)$ to $\delta_2^nf\in C^{n+1}(S^1,A^1)$, such that
	\begin{align}
	(\delta_2^nf)(s_1,\dots,s_{n+1})&=\lambda_{s_1}(f(s_2,\dots,s_{n+1}))\notag\\
	&\quad\prod_{i=1}^nf(s_1,\dots,s_is_{i+1},\dots,s_{n+1})^{(-1)^i}\notag\\
	&\quad f(s_1,\dots,s_n)^{(-1)^{n+1}}.\label{eq-delta_2^nf}
	\end{align}
	Denote $\ker\delta_2^n$ by $Z^n(S^1,A^1)$ and $\im\delta_2^{n-1}$ by $B^n(S^1,A^1)$, so that the quotient $Z^n(S^1,A^1)/B^n(S^1,A^1)$ is identified with $H^n(S^1,A^1)$, when $n\ge 2$. The elements of $C^n(S^1,A^1)$, $Z^n(S^1,A^1)$ and $B^n(S^1,A^1)$ will be called {\it $n$-cochains}, {\it $n$-cocycles} and {\it $n$-coboundaries}, respectively.
	
	The following proposition completes the result of \cref{H^n(S_A)=H^n(S^1_A^1)}.
	
	\begin{prop}\label{H^0(S_A)-ne-H^0(S^1_A^1)}
		Let $S$ be an inverse semigroup and $(\alpha,\lambda)$ an $S$-module structure on $A$. Then
		\begin{align}
		H^0(S^1,A^1)&=0,\label{H^0(S^1_A^1)}\\
		H^1(S^1,A^1)&\cong Z^1(S^1,A^1)=\{f:S\to A\mid\lambda_s(f(t))f(st)\m f(s)=\alpha(stt\m s\m)\}.\label{H^1(S^1_A^1)}
		\end{align}
		Moreover, if $S$ is a monoid, then
		\begin{align}
		H^0(S,A)&\cong\{a\in\cU A\mid \lambda_s(a)a\m=\alpha(ss\m),\  \forall s\in S\},\label{H^0(S_A)}\\
		H^1(S,A)&\cong Z^1(S^1,A^1)/\{f:S\to A\mid \exists a\in\cU A:\ f(s)=\lambda_s(a)a\m,\  \forall s\in S\}.\label{H^1(S_A)}
		\end{align}
	\end{prop}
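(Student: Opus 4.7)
The plan is to reduce each part to a direct computation with the identifications and formulas already at hand.

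For parts (i) and (ii), I would first show $\Hom(D_0(S^1), A^1) = 0$. Indeed $D_0(S^1) = F(V_0(S^1))$ is concentrated in the component indexed by the adjoined identity $1_{S^1}$, where it is freely generated by $()$; while $A^1_{1_{S^1}} = A^1_{1_A} = \{1_A\}$ is trivial, since $1_A$ is the newly adjoined top idempotent of $A^1$. Because a morphism in $\Mod{S^1}$ must be an $E(S^1)$-map that is a homomorphism on each component, only the trivial morphism exists. This immediately gives $H^0(S^1,A^1) = 0$ and $B^1(S^1,A^1) = 0$, hence $H^1(S^1,A^1) = Z^1(S^1,A^1)$. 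The explicit cocycle condition is then read off from \cref{eq-delta_2^nf} with $n = 1$: since $(\delta_2^1 f)(s,t) = \lambda_s(f(t)) f(st)\m f(s)$, the cocycles are precisely those $f$ for which this expression equals the identity $\alpha(stt\m s\m)$ of the target component.

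For parts (iii) and (iv), I would switch to the resolution $C(S)$ (exact by \cref{C(S)-is-exact}) to compute $H^n(S,A)$. The module $C_0(S) = D_0(S)$ is freely generated by $()$ at the component $1_S$, and because $S$ is a monoid, $\alpha(1_S)$ is the identity $1_A$ of $A$, so $\Hom(C_0(S),A) \cong A_{\alpha(1_S)} = \cU A$, with $a \in \cU A$ corresponding to the morphism $() \mapsto a$. Using $\partial'_1(s) = s\cdot() - ()$ from \cref{partial_1'} together with the $S$-equivariance of morphisms, the 0-coboundary of $a$ is the cochain $s \mapsto \lambda_s(a) a\m$. The kernel of $\delta^0$ is then exactly the subgroup described in (iii), while its image is the coboundary subgroup appearing in (iv).

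To conclude (iv) I would identify $Z^1_C(S,A)$ with $Z^1(S^1,A^1)$: for $n \geq 1$, the group $\Hom(C_n(S),A)$ identifies, just as in the proof of \cref{H^n(S_A)=H^n(S^1_A^1)}, with $C^n(S^1,A^1)$, and the coboundary induced by $\partial'_{n+1}$ satisfies the same formula \cref{eq-delta_2^nf} as the one induced by $\partial''_{n+1}$, because the only difference between $\partial'_n$ and $\partial''_n$ is a replacement by zero when some argument equals $1_{S^1}$, a situation that never occurs on cochains defined on $S^n$. The argument is essentially bookkeeping; the only delicate points are the indexing conventions around $1_{S^1}$ versus the identity $1_S$ of the monoid $S$, and the observation that the equality $\alpha(1_S) = 1_A$ is exactly what allows the identification $A_{\alpha(1_S)} = \cU A$ used in (iii).
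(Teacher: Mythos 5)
Your proof is correct and essentially the paper's: like the paper, you identify $\Hom(C_0(S),A)$ with $\cU A$ (yielding the $0$-coboundary $s\mapsto\lambda_s(a)a\m$ and hence \cref{H^0(S_A)}), observe that $\Hom(D_0(S^1),A^1)=\cU{A^1}=\{1_A\}$ to obtain \cref{H^0(S^1_A^1),H^1(S^1_A^1)}, and transport \cref{H^1(S_A)} through the isomorphism $\Hom(C_1(S),A)\cong\Hom(D_1(S^1),A^1)$ established in the proof of \cref{H^n(S_A)=H^n(S^1_A^1)}. One phrasing slip worth noting: $D_0(S^1)$ is not ``concentrated in the component indexed by $1_{S^1}$'' --- as a free module it has nonzero components at every idempotent --- but your conclusion stands because the free generator $(\phantom{x})$ lies in that component and any morphism must send it into the trivial group $A^1_{1_{S^1}}=\{1_A\}$, which forces the morphism to be trivial.
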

	\begin{proof}
		Indeed, in the monoid case each $f\in\Hom(C_0(S),A)$ is identified with $f(\phantom{x})\in \cU A$ by \cref{V_0(S)_e}, and $ (\delta_1^0f) (s)=\lambda_s(f(\phantom{x}))f(\phantom{x})\m$ by \cref{partial_1'}, whence \cref{H^0(S_A)}. Similarly $\Hom(D_0(S^1),A^1)=\Hom(C_0(S^1),A^1)=\cU{A^1}=\{1_A\}$, giving \cref{H^0(S^1_A^1),H^1(S^1_A^1)}. In the monoid case $\Hom(C_1(S),A)\cong\Hom(D_1(S^1),A^1)$ as showed in the proof of \cref{H^n(S_A)=H^n(S^1_A^1)}, which explains \cref{H^1(S_A)}.
	\end{proof}

	\subsection{An interpretation of \texorpdfstring{$H^2(S^1,A^1)$}{H2(S1,A1)}}
	
	Observe that $f\in Z^2(S^1,A^1)$ if and only if it satisfies \cref{2-cocycle-id-for-f} of the definition of a twisted $S$-module. As to \cref{f(se_e)-and-f(e_es)}, we first notice that it can be replaced by a ``weaker-looking'' condition.
	
	\begin{lem}\label{f(e_e)-for-f(se_e)-and-f(e_es)}
		Let $f\in Z^2(S^1,A^1)$. Then each one of the equalities  
		\begin{align*}
		f(e,es)&=\alpha(ess\m)\mbox{ for all $s\in S$ and $e\in E(S)$},\\
		f(se,e)&=\alpha(ses\m)\mbox{ for all $s\in S$ and $e\in E(S)$}
		\end{align*}
		is equivalent to $f(e,e)=\alpha(e)$ for all $e\in E(S)$.
	\end{lem}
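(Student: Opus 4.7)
The plan is to prove each of the two stated equalities equivalent to the diagonal condition $f(e,e)=\alpha(e)$ separately, using the $2$-cocycle identity \cref{2-cocycle-id-for-f} specialized at idempotents.

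For the easy direction, assuming either $f(e,es)=\alpha(ess\m)$ or $f(se,e)=\alpha(ses\m)$ holds, I would substitute $s=e$ (hence $es=e$ or $se=e$) to obtain $f(e,e)=\alpha(e)$ directly, since $e\cdot e\cdot e\m\cdot e\m=e$. This settles one direction for both claims.

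For the converse, the idea is that once the two-argument values at diagonal idempotents are normalized, the cocycle identity forces $f(e,es)$ and $f(se,e)$ to be idempotent in their ambient group component $A_{\alpha(ess\m)}$ or $A_{\alpha(ses\m)}$, hence equal to the identity of that group. Concretely, to obtain $f(e,es)=\alpha(ess\m)$, I would plug $(s,t,u)=(e,e,es)$ into \cref{2-cocycle-id-for-f}, yielding
\begin{align*}
\lambda_e(f(e,es))\,f(e,es)=f(e,e)\,f(e,es).
\end{align*}
Using property \cref{lambda_e(a)} on the left, using $f(e,e)=\alpha(e)$ on the right, and noting that $\alpha(e)\cdot\alpha(ess\m)=\alpha(ess\m)$ because $e\cdot ess\m=ess\m$, both $\lambda_e(f(e,es))$ and $\alpha(e)\,f(e,es)$ collapse to $f(e,es)$ itself. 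The identity reduces to $f(e,es)^2=f(e,es)$ inside the group $A_{\alpha(ess\m)}$, which forces $f(e,es)=\alpha(ess\m)$. The second claim is symmetric: plugging $(s,t,u)=(se,e,e)$ into \cref{2-cocycle-id-for-f} and using \cref{lambda_s(alpha(e))} together with $(se)\m=es\m$ to compute $\lambda_{se}(\alpha(e))=\alpha(ses\m)$, a parallel simplification gives $f(se,e)^2=f(se,e)$ in $A_{\alpha(ses\m)}$, so $f(se,e)=\alpha(ses\m)$.

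There is no real obstacle here; the only technicality is keeping careful track of the idempotent subscripts of each factor to confirm that multiplication by $\alpha(e)$ (resp.\ $\alpha(ses\m)$) acts as the identity of the relevant group component. Once the correct substitutions in the cocycle identity are chosen, the rest is a short calculation using \cref{lambda_e(a),lambda_s(alpha(e))} and the fact that $A_{\alpha(ess\m)}$ (resp.\ $A_{\alpha(ses\m)}$) is a group with identity $\alpha(ess\m)$ (resp.\ $\alpha(ses\m)$).
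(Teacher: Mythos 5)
Your proof is correct and takes essentially the same approach as the paper: the easy direction by specializing $s=e$, and the converse by substituting idempotent-heavy triples into the cocycle identity \cref{2-cocycle-id-for-f} and then cancelling inside the group component $A_{\alpha(ess\m)}$ (resp.\ $A_{\alpha(ses\m)}$). The paper uses the triples $(e,e,s)$ and $(s,e,e)$ where you use $(e,e,es)$ and $(se,e,e)$, but this is an immaterial variation of the same short calculation.
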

	\begin{proof}
		Clearly, $f(e,e)=\alpha(e)$ is a particular case of $f(e,es)=\alpha(ess\m)$, as well as of $f(se,e)=\alpha(ses\m)$. Conversely, assuming $f(e,e)=\alpha(e)$ and writing \cref{2-cocycle-id-for-f} for $t=u=e$, we get $\lambda_s(\alpha(e))f(s,e)=f(s,e)f(se,e)$,  whence $f(se,e)=\alpha(ses\m)$. Similarly $f(e,es)=\alpha(ess\m)$ follows from \cref{2-cocycle-id-for-f} for the triple $(e,e,s)$.
	\end{proof}
	
	\begin{lem}\label{f-cohom-to-normalized}
		For each $f\in Z^2(S^1,A^1)$ there is $g\in C^1(S^1,A^1)$, such that $\tilde f=f\cdot\delta_2^1g$ is a twisting related to the $S$-module $A$.
	\end{lem}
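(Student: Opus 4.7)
The plan is to use the freedom in $\delta_2^1 g$ to normalize $f$ on pairs of equal idempotents, and then invoke \cref{f(e_e)-for-f(se_e)-and-f(e_es)} to get the full twisting condition \cref{f(se_e)-and-f(e_es)}.

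First I would compute the restriction of $\delta_2^1 g$ to a diagonal idempotent pair $(e,e)$ with $e \in E(S)$: by \cref{eq-delta_2^nf},
\begin{align*}
(\delta_2^1 g)(e,e) = \lambda_e(g(e)) \cdot g(e^2)^{-1} \cdot g(e) = \lambda_e(g(e)),
\end{align*}
since $e^2 = e$. As $g(e) \in A_{\alpha(ee^{-1})} = A_{\alpha(e)}$, property \cref{lambda_e(a)} gives $\lambda_e(g(e)) = \alpha(e)\,g(e) = g(e)$, whence $\tilde f(e,e) = f(e,e) \cdot g(e)$.

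Next I would define $g: S \to A$ by
\begin{align*}
g(s) = \begin{cases} f(s,s)^{-1}, & s \in E(S), \\ \alpha(ss^{-1}), & s \notin E(S), \end{cases}
\end{align*}
and check that $g \in C^1(S^1,A^1)$: for $s = e \in E(S)$ we have $f(e,e) \in A_{\alpha(e \cdot e \cdot e^{-1} \cdot e^{-1})} = A_{\alpha(e)}$, so $f(e,e)^{-1}$ lives in the group component $A_{\alpha(ee^{-1})}$; for $s \notin E(S)$ obviously $\alpha(ss^{-1}) \in A_{\alpha(ss^{-1})}$.

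Finally, I would assemble the argument. The product $\tilde f := f \cdot \delta_2^1 g$ lies in $Z^2(S^1,A^1)$ because $f \in Z^2$ and $\delta_2^1 g \in B^2 \subseteq Z^2$ (using $\delta_2^2 \circ \delta_2^1 = 0$, which follows from \cref{cor-partial''^2=0}). Being a 2-cocycle, $\tilde f$ automatically satisfies the 2-cocycle identity \cref{2-cocycle-id-for-f}. Substituting the chosen $g$ into the formula above yields $\tilde f(e,e) = f(e,e) \cdot f(e,e)^{-1} = \alpha(e)$ for every $e \in E(S)$, and \cref{f(e_e)-for-f(se_e)-and-f(e_es)} then promotes this to the full condition \cref{f(se_e)-and-f(e_es)}. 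Thus $\tilde f$ is a twisting related to the $S$-module $A$.

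There is no real obstacle: the calculation collapses because $(\delta_2^1 g)(e,e)$ simplifies drastically on idempotents. The only thing to be careful about is that the prescribed values of $g$ land in the correct group components $A_{\alpha(ss^{-1})}$, which is immediate from $f(e,e) \in A_{\alpha(e)}$.
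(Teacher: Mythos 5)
Your proof is correct and takes essentially the same approach as the paper: force $\tilde f(e,e)=\alpha(e)$ on diagonal idempotent pairs via the computation $(\delta_2^1 g)(e,e)=g(e)$, and then let \cref{f(e_e)-for-f(se_e)-and-f(e_es)} upgrade this to the full condition \cref{f(se_e)-and-f(e_es)}. The only (immaterial) difference is your choice of $g$ --- trivial off $E(S)$ and equal to $f(e,e)\m$ on idempotents --- whereas the paper takes the uniform formula $g(s)=f(s,s\m)\m$; since only the values of $g$ on $E(S)$ enter the diagonal normalization, the two choices are interchangeable.
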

	\begin{proof}
		Setting $g(s)=f(s,s\m)\m$, we see that $g(s)\in A_{\alpha(ss\m)}$, so $g\in C^1(S^1,A^1)$. Moreover,
		$$
		(\delta_2^1g)(s,t)=\lambda_s(g(t))g(st)\m g(s)=\lambda_s(f(t,t\m)\m)f(st,t\m s\m)f(s,s\m)\m,
		$$
		hence 
		$$
		(\delta_2^1g)(e,e)=\lambda_e(f(e,e)\m)f(e,e)f(e,e)\m=\alpha(e)f(e,e)\m=f(e,e)\m.
		$$
		Therefore, if $\tilde f=f\cdot\delta_2^1g$, then $\tilde f(e,e)=f(e,e)f(e,e)\m=\alpha(e)$. It remains to apply \cref{f(e_e)-for-f(se_e)-and-f(e_es)}.
	\end{proof}
	
	\begin{prop}\label{H^2<->twistings}
		There is a one-to-one correspondence between the elements of $H^2(S^1,A^1)$ and the equivalence classes of twistings related to the $S$-module $A$.
	\end{prop}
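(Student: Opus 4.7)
The plan is to exhibit $H^2(S^1,A^1)$ as the quotient of the set $\mathcal{T}$ of twistings related to $(\alpha,\lambda)$ by the equivalence of twistings. Write $\Phi\colon\mathcal{T}\to H^2(S^1,A^1)$ for the map sending a twisting to its cohomology class, which makes sense once one knows $\mathcal{T}\subseteq Z^2(S^1,A^1)$.

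First I would verify that inclusion $\mathcal{T}\subseteq Z^2(S^1,A^1)$ by direct comparison of formulas. Expanding \cref{eq-delta_2^nf} for $n=2$ gives
\[
(\delta_2^2 f)(s,t,u)=\lambda_s(f(t,u))\,f(st,u)^{-1}\,f(s,tu)\,f(s,t)^{-1},
\]
so the $2$-cocycle condition is exactly \cref{2-cocycle-id-for-f}, which is part of the definition of a twisting. Note that twistings satisfy the extra normalization \cref{f(se_e)-and-f(e_es)}, so the inclusion is in general strict.

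Next I would show that $\Phi$ is surjective. Given $[f]\in H^2(S^1,A^1)$ with $f\in Z^2(S^1,A^1)$, \cref{f-cohom-to-normalized} produces $g\in C^1(S^1,A^1)$ with $\tilde f=f\cdot\delta_2^1 g$ satisfying $\tilde f(e,e)=\alpha(e)$ for all $e\in E(S)$. By \cref{f(e_e)-for-f(se_e)-and-f(e_es)} this single normalization upgrades to the full condition \cref{f(se_e)-and-f(e_es)}, so $\tilde f\in\mathcal{T}$ and $\Phi(\tilde f)=[f]$.

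Finally, I would verify that the fibers of $\Phi$ are exactly the equivalence classes of twistings, which is a matter of unwinding definitions. Two twistings $f,f'\in\mathcal{T}$ satisfy $\Phi(f)=\Phi(f')$ iff $f'=f\cdot\delta_2^1 g$ for some $g\in C^1(S^1,A^1)$, i.e.\ some $g\colon S\to A$ with $g(s)\in A_{\alpha(ss^{-1})}$; by the definition of $\delta_2^1$ this reads
\[
f'(s,t)=\lambda_s(g(t))\,g(st)^{-1}\,g(s)\,f(s,t),
\]
which is precisely the equivalence of twistings (in the commutative case) recorded in \cref{subsec-tw-mod}. Thus $\Phi$ descends to a bijection $\mathcal{T}/{\sim}\;\to H^2(S^1,A^1)$. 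The only substantive step is the normalization \cref{f-cohom-to-normalized}, which has already been proved; the remainder is a direct matching of the twisting axioms against the cochain/coboundary formulas.
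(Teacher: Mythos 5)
Your proposal is correct and takes essentially the same route as the paper: the paper likewise uses \cref{f-cohom-to-normalized} to show every class in $H^2(S^1,A^1)$ contains a twisting, and then observes via \cref{eq-tw-mod-f'} that two twistings are equivalent precisely when they are cohomologous. Your additional verifications (that twistings lie in $Z^2(S^1,A^1)$ via the $n=2$ case of \cref{eq-delta_2^nf}, and the explicit match of $\delta_2^1$ against the equivalence formula) are details the paper leaves implicit, so no gap either way.
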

	\begin{proof}
		It follows from \cref{f-cohom-to-normalized} that each class $[f]\in H^2(S^1,A^1)$ contains the twisting $\tilde f$ related to $(\alpha,\lambda)$. Now by \cref{eq-tw-mod-f'} of the definition of equivalent twisted $S$-modules two twistings are equivalent if and only if they are cohomologous as elements of $Z^2(S^1,A^1)$.
	\end{proof}
	
	\subsection{The groups \texorpdfstring{$H^n_\le(S^1,A^1)$}{Hn(S1,A1)}}\label{sec-H^n_<=}
	
	It was proved in~\cite[Lemma 3.28]{DK2} that the twisting $f$ of a Sieben's twisted $S$-module is order-preserving in the sense that $f(s,t)\le f(s',t')$ for $s\le s'$ and $t\le t'$. The converse also holds: if $f$ is order-preserving, then it satisfies the Sieben's condition \ref{Sieben's-condition}, as
	\begin{align*}
	\alpha(ess\m)&=f(e,es)\le f(e,s)\impl \alpha(ess\m)=\alpha(ess\m)f(e,s)=f(e,s),\\
	\alpha(ses\m)&=f(se,e)\le f(s,e)\impl \alpha(ses\m)=\alpha(ses\m)f(s,e)=f(s,e).
	\end{align*}
	
	We shall say that $f\in C^n(S^1,A^1)$, $n\ge 1$, {\it is order-preserving}, if
	$$
	s_1\le t_1,\dots,s_n\le t_n\impl f(s_1,\dots,s_n)\le f(t_1,\dots,t_n).
	$$
	Since $\le$ respects multiplication, the order-preserving cochains form a subgroup of $C^n(S^1,A^1)$. We denote it by $C^n_\le(S^1,A^1)$. Note also that $\delta_2^nf$ preserves the order, whenever $f$ does. Thus, we obtain the cochain complex
	$$
	C^1_\le(S^1,A^1)\overset{\delta_2^1}{\to}\dots\overset{\delta_2^{n-1}}{\to}C^n_\le(S^1,A^1)\overset{\delta_2^n}{\to}\dots
	$$
	We would like to add one more term on the left to this sequence, whose definition is motivated by the following results, in which $S$ is assumed to be an inverse semigroup and $A$ is an $S$-module.
	
	\begin{rem}\label{H^0(S_A)-as-f:E(S)->A}
		The group $H^0(S,A)$ is isomorphic to the group of functions $f:E(S)\to A$, such that $f(e)\in A_{\alpha(e)}$ and
		\begin{align}\label{lambda_s(f(e))=f(ses-inv)}
		\lambda_s(f(e))=f(ses\m)
		\end{align}
		for all $s\in S$ and $e\in E(S)$.
	\end{rem}
	\noindent The isomorphism is explained in~\cite[Proposition 5.5]{Lausch}. We would like to note that in the monoid case a function $f:E(S)\to A$ with the properties above is determined by $f(1_S)\in\cU A$, as $f(e)=f(e\cdot 1_S\cdot e\m)=\lambda_e(f(1_S))=\alpha(e)f(1_S)$. Moreover, $\lambda_s(f(1_S))=f(s\cdot 1_S\cdot s\m)=f(ss\m)=\alpha(ss\m)f(1_S)$, so $\lambda_s(f(1_S))f(1_S)\m=\alpha(ss\m)$. Conversely, if $a\in \cU A$  is such that $\lambda_s(a)a\m=\alpha(ss\m)$, then set $f(e)=\alpha(e)a$ and notice that $f(ses\m)=\alpha(ses\m)a=\alpha(se(se)\m)a=\lambda_{se}(a)=\lambda_s(\alpha(e)a)=\lambda_s(f(e))$. This relates the result of the remark with \cref{H^0(S_A)}.
	
	\begin{lem}\label{lambda_s(f(e))=f(ses-inv)<=>f-ord-pres-and-delta^0f=0}
		Let $f:E(S)\to A$ with $f(e)\in A_{\alpha(e)}$   for all $e\in E(S)$. Then $f$ satisfies \cref{lambda_s(f(e))=f(ses-inv)} if and only if $f$ is order-preserving and
		\begin{align}\label{lambda_s(f(s-inv-s))=f(ss-inv)}
		\lambda_s(f(s\m s))=f(ss\m)
		\end{align}
		for all $s\in S$.
	\end{lem}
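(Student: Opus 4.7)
The plan is to handle the two directions separately; the forward implication will be essentially immediate from specializations, while the backward implication requires a nontrivial manipulation through an auxiliary element.

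For $(\impl)$, I would first obtain \cref{lambda_s(f(s-inv-s))=f(ss-inv)} as the special case $e=s\m s$ of \cref{lambda_s(f(e))=f(ses-inv)}, using $s\cdot s\m s\cdot s\m=ss\m$. To establish order-preservingness, given $e\le e'$ in $E(S)$ (so $e=ee'$), I would apply \cref{lambda_s(f(e))=f(ses-inv)} with $s=e$ (whence $s\m=e$) to get $\lambda_e(f(e'))=f(ee'e)=f(e)$; combined with the $S$-module identity $\lambda_e(f(e'))=\alpha(e)f(e')$ coming from property \cref{lambda_e(a)}, this yields $f(e)=\alpha(e)f(e')$, which is exactly $f(e)\le f(e')$ in the natural order on the semilattice of groups $A$.

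For the converse, fix arbitrary $s\in S$ and $e\in E(S)$. The key idea is to exploit the element $t:=se\in S$: since idempotents commute in $S$, one has $t\m t=es\m se=s\m se$ and $tt\m=ses\m$, so the hypothesis \cref{lambda_s(f(s-inv-s))=f(ss-inv)} applied to $t$ reads $\lambda_{se}(f(s\m se))=f(ses\m)$. Since $(\alpha,\lambda)$ is an $S$-module on a commutative semilattice of groups, $\lambda$ is a homomorphism, and $\lambda_e$ acts as multiplication by $\alpha(e)$, so the left-hand side simplifies to $\lambda_s(f(s\m se))$ (using $\alpha(s\m se)\le\alpha(e)$). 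On the other hand, order-preservingness applied to $s\m se\le e$ yields $f(s\m se)=\alpha(s\m se)f(e)$; applying $\lambda_s$ and using property \cref{lambda_s(alpha(e))} to rewrite $\lambda_s(\alpha(s\m se))=\alpha(s\cdot s\m se\cdot s\m)=\alpha(ses\m)$, together with the fact that $\lambda_s(f(e))\in A_{\alpha(ses\m)}$, gives $\lambda_s(f(s\m se))=\alpha(ses\m)\lambda_s(f(e))=\lambda_s(f(e))$. Combining the two expressions for $\lambda_s(f(s\m se))$ produces the desired \cref{lambda_s(f(e))=f(ses-inv)}.

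The main obstacle is spotting that $t=se$ is the right auxiliary element, since without it there is no way to pass from the weaker hypothesis indexed over $S$ alone to the stronger identity indexed over $S\times E(S)$. Once this is in place, the remaining work is routine bookkeeping with idempotents (chiefly the identity $es\m se=s\m se$) and direct application of the $S$-module axioms \cref{lambda_e(a),lambda_s(alpha(e))}.
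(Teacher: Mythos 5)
Your proof is correct and follows essentially the same route as the paper: the forward direction by the specializations $e=s\m s$ and idempotent $s$, and the converse by applying \cref{lambda_s(f(s-inv-s))=f(ss-inv)} to the auxiliary element $se$ together with the identity $f(ee')=\alpha(e')f(e)$ extracted from order-preservation. The only differences are cosmetic (the paper absorbs $\alpha(s\m s)$ via $\lambda_s\circ\lambda_{s\m s}=\lambda_s$, whereas you push $\lambda_s$ through the product and absorb $\alpha(ses\m)$ at the end), so no further comment is needed.
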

	\begin{proof}
		Suppose \cref{lambda_s(f(e))=f(ses-inv)}. Taking $e=s\m s$, we get \cref{lambda_s(f(s-inv-s))=f(ss-inv)}. Using \cref{lambda_s(f(e))=f(ses-inv)} with $s=e'\in E(S)$, one sees that $f(ee')=\alpha(e')f(e)\le f(e)$, so $f$ is order-preserving. 
		
		Conversely, if $f$ is order-preserving, then $f(ee')\le f(e)$, so 
		\begin{align}\label{f(ee')=alpha(e')f(e)}
		f(ee')=f(ee')f(ee')\m f(e)=\alpha(ee')f(e)=\alpha(e')f(e).
		\end{align}
		Assuming additionally \cref{lambda_s(f(s-inv-s))=f(ss-inv)}, one gets 
		\begin{align*}
		f(ses\m)&=f(se(se)\m)=\lambda_{se}(f((se)\m se))=\lambda_s(\alpha(e)f(es\m s))\\
		&=\lambda_s(\alpha(s\m s)f(e))=\lambda_s(\lambda_{s\m s}(f(e)))=\lambda_s(f(e)).
		\end{align*}
	\end{proof}
	
	Define $C^0_\le(S^1,A^1)$ to be the abelian group of order-preserving functions $f:E(S)\to A$, such that $f(e)\in A_{\alpha(e)}$ for all $e\in E(S)$. Given $f\in C^0_\le(S^1,A^1)$, set 
	\begin{align}\label{delta_2^0f(s)}
	(\delta_2^0f)(s)=\lambda_s(f(s\m s))f(ss\m)\m.
	\end{align}
	Clearly, $\delta_2^0f\in C^1_\le(S^1,A^1)$.
	
	\begin{lem}\label{lem-f(e)f(e')-inv}
		For any $f\in C^0_\le(S^1,A^1)$ and $e,e'\in E(S)$ one has $f(e)f(e')\m=\alpha(ee')$. 
	\end{lem}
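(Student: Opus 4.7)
The plan is to reduce the identity $f(e)f(e')^{-1}=\alpha(ee')$ to a computation inside the single group component $A_{\alpha(ee')}$, using order-preservation to transport $f(e)$ and $f(e')$ down to that component.

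First I would observe that $ee'\le e$ and $ee'\le e'$ in $E(S)$, so order-preservation of $f$ gives $f(ee')\le f(e)$ and $f(ee')\le f(e')$ in the natural partial order on the semilattice of groups $A$. As already used in the derivation of \cref{f(ee')=alpha(e')f(e)} (with the roles of $e,e'$ symmetric), this translates into the equalities
\begin{equation*}
f(ee')=\alpha(ee')\,f(e)=\alpha(ee')\,f(e').
\end{equation*}

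Next I would compute $f(e)f(e')^{-1}$ inside $A$. Since $f(e)\in A_{\alpha(e)}$ and $f(e')^{-1}\in A_{\alpha(e')}$, the product lies in $A_{\alpha(e)\alpha(e')}=A_{\alpha(ee')}$, and multiplication in a semilattice of groups is given by first ``restricting'' both factors via the idempotent $\alpha(ee')$ and then multiplying in $A_{\alpha(ee')}$. Concretely,
\begin{equation*}
f(e)f(e')^{-1}=\bigl(\alpha(ee')f(e)\bigr)\bigl(\alpha(ee')f(e')^{-1}\bigr).
\end{equation*}
The first factor is $f(ee')$ by the previous paragraph. For the second, a one-line check shows that $\alpha(ee')f(e')^{-1}$ equals the inverse of $\alpha(ee')f(e')=f(ee')$ inside the group $A_{\alpha(ee')}$, so it equals $f(ee')^{-1}$ in that group. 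Multiplying gives $f(e)f(e')^{-1}=f(ee')f(ee')^{-1}=\alpha(ee')$.

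The whole argument is routine bookkeeping in the semilattice of groups; no step is subtle. The only minor care required is in the second step, to justify that ``restriction'' of $f(e')^{-1}$ to $A_{\alpha(ee')}$ commutes with inversion, which follows directly from the semilattice-of-groups structure and is not a real obstacle.
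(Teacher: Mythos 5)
Your proof is correct and takes essentially the same route as the paper: both arguments invoke \cref{f(ee')=alpha(e')f(e)} (applied symmetrically in $e$ and $e'$) to rewrite $f(e)f(e')\m$ as $f(ee')f(ee')\m=\alpha(ee')$ inside the component $A_{\alpha(ee')}$. The paper's one-line computation $f(e)f(e')\m=\alpha(e')f(e)\bigl(\alpha(e)f(e')\bigr)\m$ is precisely your restriction step, since $\alpha(ee')f(e)=\alpha(e')f(e)$ for $f(e)\in A_{\alpha(e)}$, and your care about inversion commuting with restriction is the same bookkeeping the paper handles implicitly.
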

	\begin{proof}
		Indeed, using \cref{f(ee')=alpha(e')f(e)} and the facts that $f(e)\in A_{\alpha(e)}$, $f(e')\in A_{\alpha(e')}$, we obtain
		\begin{align*}
		f(e)f(e')\m=\alpha(e')f(e)(\alpha(e)f(e'))\m=f(ee')f(ee')\m=\alpha(ee').
		\end{align*}
	\end{proof}
	
	\begin{lem}\label{lem-delta_2^1-circ-delta_2^0}
		The composition $\delta_2^1\circ\delta_2^0$ is zero.
	\end{lem}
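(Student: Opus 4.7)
The plan is to compute $(\delta_2^1\circ\delta_2^0)(f)(s,t)$ directly and verify that it equals the identity element $\alpha(stt\m s\m)$ of the group component $A_{\alpha(stt\m s\m)}$ in which the target value lives. Unfolding \cref{delta_2^0f(s),eq-delta_2^nf} and using that $\lambda_s$ is a semigroup homomorphism on $A$ (hence $\lambda_s\circ\lambda_t=\lambda_{st}$, and $\lambda_s$ preserves inverses since $A$ is a Clifford semigroup) produces
\begin{align*}
(\delta_2^1\delta_2^0 f)(s,t)&=\lambda_{st}(f(t\m t))\cdot\lambda_s(f(tt\m))\m\cdot f(stt\m s\m)\\
&\quad\cdot\lambda_{st}(f(t\m s\m st))\m\cdot\lambda_s(f(s\m s))\cdot f(ss\m)\m.
\end{align*}

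Since $A$ is a semilattice of abelian groups and is therefore commutative, the next step is to rearrange the six factors into three pairs, each of which is $f(e)f(e')\m$ possibly with $\lambda_s$ or $\lambda_{st}$ applied:
\begin{align*}
\lambda_{st}\bigl(f(t\m t)f(t\m s\m st)\m\bigr),\quad\lambda_s\bigl(f(s\m s)f(tt\m)\m\bigr),\quad f(stt\m s\m)f(ss\m)\m.
\end{align*}
Applying \cref{lem-f(e)f(e')-inv} to each pair collapses it to an idempotent of $A$, namely $\lambda_{st}(\alpha(t\m s\m st))$, $\lambda_s(\alpha(s\m s\cdot tt\m))$, and $\alpha(stt\m s\m)$ respectively.

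Finally I would simplify the first two idempotents using $\lambda_s(\alpha(e))=\alpha(ses\m)$ from \cref{lambda_s(alpha(e))} together with elementary manipulations of commuting idempotents of $S$ (in particular $ss\m s=s$ and the fact that $stt\m s\m$ is idempotent and absorbs $ss\m$), and verify that each of the three idempotents in fact equals $\alpha(stt\m s\m)$. Their product is then $\alpha(stt\m s\m)^3=\alpha(stt\m s\m)$ by idempotency, which is precisely the identity of $A_{\alpha(stt\m s\m)}$, so $(\delta_2^1\circ\delta_2^0)(f)$ is the constant identity cochain, i.e.\ the zero of $C^2_\le(S^1,A^1)$. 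The main pitfall is keeping careful track of which component of $A$ each factor inhabits and confirming that the three conjugated idempotents do collapse to the same $\alpha(stt\m s\m)$; once the regrouping is fixed, all remaining steps are short applications of the inverse semigroup identity $ss\m s=s$ and commutativity of $E(S)$.
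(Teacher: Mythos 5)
Your proposal is correct and follows essentially the same route as the paper's proof: expand $(\delta_2^1(\delta_2^0f))(s,t)$ via \cref{delta_2^0f(s),eq-delta_2^nf}, regroup the six factors using commutativity of $A$ into the three pairs $\lambda_{st}\bigl(f(t\m t)f(t\m s\m st)\m\bigr)$, $\lambda_s\bigl(f(s\m s)f(tt\m)\m\bigr)$, $f(stt\m s\m)f(ss\m)\m$, collapse each via \cref{lem-f(e)f(e')-inv}, and finish with \cref{lambda_s(alpha(e))}. The final idempotent computations you defer (e.g.\ $\lambda_{st}(\alpha(t\m s\m st))=\alpha(stt\m s\m)$) do check out exactly as in the paper, so no gap remains.
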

	\begin{proof}
		Let $f\in C^0_\le(S^1,A^1)$ and $s,t\in S$. Then by \cref{eq-delta_2^nf}
		$$
		(\delta_2^1(\delta_2^0f))(s,t)=\lambda_s((\delta_2^0f)(t))(\delta_2^0f)(st)\m (\delta_2^0f)(s).
		$$
		In view of \cref{delta_2^0f(s)} we have
		\begin{align*}
		\lambda_s((\delta_2^0f)(t))&=\lambda_s(\lambda_t(f(t\m t))f(tt\m)\m)=\lambda_{st}(f(t\m t))\lambda_s(f(tt\m)\m),\\
		(\delta_2^0f)(st)\m&=\lambda_{st}(f(t\m s\m st)\m) f(stt\m s\m),\\
		(\delta_2^0f)(s)&=\lambda_s(f(s\m s))f(ss\m)\m.
		\end{align*}
		Hence,
		\begin{align*}
		(\delta_2^1(\delta_2^0f))(s,t)&=f(stt\m s\m)f(ss\m)\m\\
		&\lambda_{st}(f(t\m t)f(t\m s\m st)\m)\\
		&\lambda_s(f(s\m s)f(tt\m)\m).
		\end{align*}
		By \cref{lem-f(e)f(e')-inv}
		\begin{align*}
		f(stt\m s\m)f(ss\m)\m&=\alpha(stt\m s\m),\\
		f(t\m t)f(t\m s\m st)\m&=\alpha(t\m s\m st),\\
		f(s\m s)f(tt\m)\m&=\alpha(s\m stt\m).
		\end{align*}
		It remains to apply \cref{lambda_s(alpha(e))} of the definition of a twisted $S$-module to get
		$$
		(\delta_2^1(\delta_2^0f))(s,t)=\alpha(stt\m s\m).
		$$
	\end{proof}
	
	As a consequence we get
	\begin{prop}\label{C_le-complex}
		The sequence
		\begin{align}\label{eq-complex-C_le}
		C^0_\le(S^1,A^1)\overset{\delta_2^0}{\to}\dots\overset{\delta_2^{n-1}}{\to}C^n_\le(S^1,A^1)\overset{\delta_2^n}{\to}\dots
		\end{align}
		is a cochain complex of abelian groups.
	\end{prop}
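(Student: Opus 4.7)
My plan is to verify the three defining conditions of a cochain complex of abelian groups: each term is an abelian group, each $\delta_2^n$ is a group homomorphism with the stated domain and codomain, and $\delta_2^{n+1}\circ\delta_2^n=0$ for every $n\ge 0$. Most of this will be essentially free from what has been set up, so the proof will largely be a packaging exercise.

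I would start with the group-theoretic bookkeeping. The group $C^0_\le(S^1,A^1)$ is an abelian group by its very definition. For $n\ge 1$, I would note that $C^n_\le(S^1,A^1)$ is a subgroup of $C^n(S^1,A^1)$ because the natural partial order on $A$ is compatible with multiplication, so pointwise products and pointwise inverses of order-preserving cochains are again order-preserving. For $n\ge 1$ the map $\delta_2^n$ is already a homomorphism of abelian groups by formula \cref{eq-delta_2^nf}, and the paragraph preceding the proposition observes that $\delta_2^n f$ is order-preserving whenever $f$ is, so its restriction lands in $C^{n+1}_\le(S^1,A^1)$ as required.

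The case $n=0$ needs a short direct check. That $\delta_2^0 f\in C^1_\le(S^1,A^1)$ is already observed right after \cref{delta_2^0f(s)}. To see that $\delta_2^0$ is a homomorphism, I would expand
\[
(\delta_2^0(fg))(s)=\lambda_s(f(s\m s)g(s\m s))\,(f(ss\m)g(ss\m))\m
\]
and use that the component $A_{\alpha(ss\m)}$ is abelian and that $\lambda_s$ is a (relatively invertible) endomorphism to rearrange the product as $(\delta_2^0 f)(s)\,(\delta_2^0 g)(s)$.

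For the composition vanishing I would split into two cases. The case $n=0$, namely $\delta_2^1\circ\delta_2^0=0$, is exactly \cref{lem-delta_2^1-circ-delta_2^0}. For $n\ge 1$, using the identification of $C^n(S^1,A^1)$ with $\Hom(D_n(S^1),A^1)$ employed in the proof of \cref{H^n(S_A)=H^n(S^1_A^1)}, the coboundary has the form $\delta_2^n(f)=f\circ\partial''_{n+1}$; hence $\delta_2^{n+1}\circ\delta_2^n(f)=f\circ(\partial''_{n+1}\circ\partial''_{n+2})$, which is zero by \cref{cor-partial''^2=0}. I do not anticipate any genuine obstacle: all the real work has already been absorbed into \cref{lem-delta_2^1-circ-delta_2^0,cor-partial''^2=0} and the order-preservation remark; the remaining content of the proposition is essentially assembling these ingredients.
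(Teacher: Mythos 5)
Your proof is correct and matches the paper's (implicit) argument: the paper presents \cref{C_le-complex} as an immediate consequence of \cref{lem-delta_2^1-circ-delta_2^0} together with the preceding observations that order-preserving cochains form a subgroup on which $\delta_2^n$ restricts, and that for $n\ge 1$ the vanishing $\delta_2^{n+1}\circ\delta_2^n=0$ is inherited from $\delta_2^n(f)=f\circ\partial''_{n+1}$ and \cref{cor-partial''^2=0}. Your only addition is the short explicit check that $\delta_2^0$ is a homomorphism (where what you really use is commutativity of $A$ as a whole, not just of a single component), which the paper leaves tacit.
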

	
	\begin{defn}\label{defn-H_le}
		The groups of $n$-cocycles, $n$-coboundaries and $n$-cohomologies of \cref{eq-complex-C_le} will be denoted by $Z^n_\le(S^1,A^1)$, $B^n_\le(S^1,A^1)$ and $H^n_\le(S^1,A^1)$, respectively ($n\ge 0$).
	\end{defn}
	
	\begin{prop}\label{H^0_le(S^1_A^1)-cong-H^0(S_A)}
		The group $H^0_\le(S^1,A^1)$ is isomorphic to $H^0(S,A)$.
	\end{prop}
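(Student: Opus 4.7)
The plan is to observe that the two groups coincide as sets of functions, via the characterizations already established in \cref{H^0(S_A)-as-f:E(S)->A,lambda_s(f(e))=f(ses-inv)<=>f-ord-pres-and-delta^0f=0}.

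First, since the complex \cref{eq-complex-C_le} defining $H^0_\le(S^1,A^1)$ has no term in degree $-1$, one has $B^0_\le(S^1,A^1)=\{0\}$, so $H^0_\le(S^1,A^1)=Z^0_\le(S^1,A^1)$. By definition, $Z^0_\le(S^1,A^1)$ consists of the order-preserving functions $f:E(S)\to A$ with $f(e)\in A_{\alpha(e)}$ for every $e\in E(S)$, satisfying $(\delta_2^0f)(s)=\alpha(ss\m)$ (the identity element of $A_{\alpha(ss\m)}$) for every $s\in S$; by \cref{delta_2^0f(s)} this last condition is exactly $\lambda_s(f(s\m s))=f(ss\m)$.

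Next, \cref{lambda_s(f(e))=f(ses-inv)<=>f-ord-pres-and-delta^0f=0} says that, for a function $f:E(S)\to A$ with $f(e)\in A_{\alpha(e)}$, the conjunction of being order-preserving and satisfying \cref{lambda_s(f(s-inv-s))=f(ss-inv)} is equivalent to the single condition \cref{lambda_s(f(e))=f(ses-inv)}. Hence the elements of $Z^0_\le(S^1,A^1)$ are precisely those $f:E(S)\to A$ with $f(e)\in A_{\alpha(e)}$ satisfying $\lambda_s(f(e))=f(ses\m)$ for all $s\in S$ and $e\in E(S)$, which, by \cref{H^0(S_A)-as-f:E(S)->A}, is exactly the description of $H^0(S,A)$.

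It remains to check that the bijection $f\mapsto f$ is a group isomorphism. Both group operations are pointwise multiplication in $A$ (on the side of $Z^0_\le(S^1,A^1)$ this is inherited from $C^0_\le(S^1,A^1)$; on the side of $H^0(S,A)$ it comes from the functor $\Hom(-,A)$ under the identification of \cref{H^0(S_A)-as-f:E(S)->A}), so the identification is automatically additive in the multiplicative convention. No serious obstacle is expected: the whole statement is essentially a restatement of \cref{lambda_s(f(e))=f(ses-inv)<=>f-ord-pres-and-delta^0f=0} combined with the fact that the complex \cref{eq-complex-C_le} starts in degree~$0$.
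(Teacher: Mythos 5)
Your proposal is correct and takes essentially the same route as the paper: the paper's own proof is a one-liner citing exactly the three ingredients you use, namely \cref{delta_2^0f(s)}, \cref{lambda_s(f(e))=f(ses-inv)<=>f-ord-pres-and-delta^0f=0} and \cref{H^0(S_A)-as-f:E(S)->A}. You merely spell out what the paper leaves implicit --- that $H^0_\le(S^1,A^1)=Z^0_\le(S^1,A^1)$ because the complex \cref{eq-complex-C_le} starts in degree $0$, and that the identification respects the pointwise group structures.
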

	\begin{proof}
		This is explained by \cref{lambda_s(f(e))=f(ses-inv)<=>f-ord-pres-and-delta^0f=0,delta_2^0f(s),H^0(S_A)-as-f:E(S)->A}.
	\end{proof}
	
	\begin{prop}\label{H^1_le(S^1_A^1)-cong-H^1(S^1_A^1)}
		One has $Z^1_\le(S^1,A^1)=Z^1(S^1,A^1)$, so $H^1_\le(S^1,A^1)$ is isomorphic to
		\begin{align}\label{Z^1(S^1_A^1)-modulo-B^1_le(S^1_A^1)}
		Z^1(S^1,A^1)/\{f:S\to A\mid \exists g\in C^0_\le(S^1,A^1):\  f(s)=\lambda_s(g(s\m s))g(ss\m)\m\}.
		\end{align}
		In particular, it is isomorphic to $H^1(S,A)$, when $S$ is a monoid.
	\end{prop}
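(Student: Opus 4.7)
My plan is to carry out three steps: (i) prove $Z^1_\le(S^1,A^1)=Z^1(S^1,A^1)$; (ii) read off $B^1_\le(S^1,A^1)$ from the formula \cref{delta_2^0f(s)}; and (iii), when $S$ is a monoid, identify the resulting $B^1_\le(S^1,A^1)$ with the denominator in \cref{H^1(S_A)}.

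For step (i) only the inclusion $Z^1(S^1,A^1)\subseteq Z^1_\le(S^1,A^1)$ needs work. Given $f\in Z^1(S^1,A^1)$, I would first substitute $s=t=e\in E(S)$ in the cocycle identity from \cref{H^1(S^1_A^1)}; together with \cref{lambda_e(a)} this yields $f(e)=\alpha(e)$. Since $A$ is a semilattice of abelian groups and $f(st)\in A_{\alpha(stt\m s\m)}$, I would then rearrange the cocycle identity to the form $f(st)=\lambda_s(f(t))f(s)$ inside that group. Specializing $s=e\in E(S)$ produces $f(et)=\alpha(e)f(t)$, so that for any $s\le t$, writing $s=et$ with $e\in E(S)$ gives $f(s)=\alpha(e)f(t)\le f(t)$ by the definition of the natural partial order on $A$; hence $f$ is order-preserving.

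Step (ii) is immediate from the shape of $\delta_2^0$ in \cref{delta_2^0f(s)}, and combined with (i) it yields the announced quotient formula for $H^1_\le(S^1,A^1)$. For step (iii) I would observe that $1_S$ is the top of $E(S)$; since $\alpha$ is an isomorphism, $\alpha(1_S)$ is the top of $E(A)$, so $A$ is a monoid with $1_A=\alpha(1_S)$ and $A_{\alpha(1_S)}=\cU A$. Given $g\in C^0_\le(S^1,A^1)$, I would set $a=g(1_S)\in\cU A$; order-preservation of $g$ at $e\le 1_S$ forces $g(e)=\alpha(e)a$, and substituting into \cref{delta_2^0f(s)} while using $\lambda_s(\alpha(s\m s))=\alpha(ss\m)$ together with the fact that $\alpha(ss\m)$ is the identity of the group containing $\lambda_s(a)$ would collapse $(\delta_2^0g)(s)$ to $\lambda_s(a)a\m$. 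Conversely, for $a\in\cU A$ the function $g(e)=\alpha(e)a$ manifestly lies in $C^0_\le(S^1,A^1)$ and satisfies $(\delta_2^0g)(s)=\lambda_s(a)a\m$. Together with step (i), this gives $H^1_\le(S^1,A^1)\cong H^1(S,A)$ via \cref{H^1(S_A)}.

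The main technical obstacle is the careful bookkeeping of group components in the semilattice of abelian groups $A$: most cancellations above reduce to the principle that an idempotent $\alpha(e)\in E(A)$ acts as the identity on any group component $A_{\alpha(f)}$ with $f\le e$. Once this principle is invoked systematically, no serious difficulties remain.
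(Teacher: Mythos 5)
Your proposal is correct and takes essentially the same route as the paper's proof: the paper likewise gets $f(e)=\alpha(e)$ from the cocycle identity at $(e,e)$ and then $f(es)=\alpha(e)f(s)\le f(s)$ at $(e,s)$ (your rearrangement $f(st)=\lambda_s(f(t))f(s)$ specialized at $s=e$ is the same computation), reads off the coboundary group from \cref{delta_2^0f(s)}, and in the monoid case identifies $g\in C^0_\le(S^1,A^1)$ with $g(1_S)\in\cU A$ via $g(e)=\alpha(e)g(1_S)$ before invoking \cref{H^1(S_A)}. The only differences are cosmetic: you spell out the converse assignment $a\mapsto g(e)=\alpha(e)a$ and the collapse of $(\delta_2^0g)(s)$ to $\lambda_s(a)a\m$, which the paper leaves implicit in the phrase ``the result now follows''.
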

	\begin{proof}
		Let $f\in Z^1(S^1,A^1)$. Applying the $1$-cocycle identity to the pair $(e,e)$, where $e\in E(S)$, we get $\alpha(e)(f(e))f(e)\m f(e)=\alpha(e)$, that is $f(e)=\alpha(e)$. Now writing the same identity for the pair $(e,s)$, we have $\alpha(e)f(s)f(es)\m f(e)=\alpha(ess\m)$, yielding $f(es)=\alpha(e)f(s)\le f(s)$, so $f$ is order-preserving. This shows that $Z^1_\le(S^1,A^1)=Z^1(S^1,A^1)$, proving \cref{Z^1(S^1_A^1)-modulo-B^1_le(S^1_A^1)}.
		
		If $S$ is a monoid and $g\in C^0_\le(S^1,A^1)$, then $g(e)=\alpha(e)g(1_S)$ by \cref{f(ee')=alpha(e')f(e)}, so $g$ is identified with $g(1_S)\in \cU A$. The result now follows from \cref{H^1(S_A)}.
	\end{proof}
	
	
	\section{Partial group cohomology with values in non-unital partial modules}\label{sec-non-unital-H^n}
	\begin{lem}\label{lem-M(S)-is-comm}
		Let $S$ be a commutative semigroup and $S^2=S$. Then the multipliers of $S$ commute with each other and with the elements of $S$.
	\end{lem}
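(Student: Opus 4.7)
The plan is to prove the two assertions of the lemma in sequence: first that every multiplier $w \in \cM{S}$ commutes with every element $s \in S$ in the sense that $ws = sw$, and then deduce from this that any two multipliers commute in $\cM{S}$. The hypothesis $S^2 = S$ is precisely what is needed to bring the linking axiom (iii) into play, since that axiom relates the left and right actions of a single multiplier through a pair of ambient elements of $S$.

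For the first claim, given $s \in S$ I would use $S^2 = S$ to factor $s = uv$ with $u, v \in S$, and then run the chain
\[
ws \;=\; w(uv) \;\overset{\text{(i)}}{=}\; (wu)v \;=\; v(wu) \;\overset{\text{(iii)}}{=}\; (vw)u \;=\; u(vw) \;\overset{\text{(ii)}}{=}\; (uv)w \;=\; sw,
\]
where the unlabeled equalities are the commutativity of $S$ applied to two elements of $S$. The crucial step is (iii), which lets $w$ ``migrate'' past an internal factor of $s$; axioms (i) and (ii) are only used to peel off or reattach the outer element.

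For the second claim, let $w_1, w_2 \in \cM{S}$. By the definition of composition in $\cM{S}$ one has $(ww')s = w(w's)$ and $s(ww') = (sw)w'$ for every $s \in S$. Combining this with the first claim, for any $s \in S$ I would compute
\[
(w_1 w_2)s \;=\; w_1(w_2 s) \;=\; (w_2 s)\,w_1 \;=\; (s w_2)\,w_1 \;=\; s(w_2 w_1) \;=\; (w_2 w_1)\,s,
\]
and an entirely analogous chain, using step 1 three times and the right-composition rule, yields $s(w_1 w_2) = s(w_2 w_1)$. Since a multiplier is determined by its pair of linked left and right translations on $S$, this forces $w_1 w_2 = w_2 w_1$ in $\cM{S}$.

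The argument is essentially mechanical; I do not anticipate a serious obstacle. The only real subtlety is recognizing that $S^2 = S$ is exactly the hypothesis needed to produce the internal factorization $s = uv$ to which axiom (iii) can be applied, after which both assertions fall out by short formal manipulations.
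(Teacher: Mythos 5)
Your proposal is correct and takes essentially the same route as the paper: the commutation of two multipliers is established by exactly the same chain $(w_1w_2)s=w_1(w_2s)=(w_2s)w_1=(sw_2)w_1=s(w_2w_1)=(w_2w_1)s$, together with the analogous computation for the right action, so that equality of the two multipliers follows from agreement of both linked translations. The only difference is that for the first claim $ws=sw$ the paper simply cites [DK2, Remark 5.2], whereas you reprove it directly via the factorization $s=uv$ afforded by $S^2=S$ and the three multiplier axioms --- which is precisely the standard argument behind that cited remark, so no gap arises.
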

	\begin{proof}
		It was proved in~\cite[Remark 5.2]{DK2} that $ws=sw$ for all $w\in\cM S$ and $s\in S$. Now if $w',w''\in\cM S$, then
		$$
		(w'w'')s=w'(w''s)=(w''s)w'=(sw'')w'=s(w''w')=(w''w')s.
		$$
		Similarly $s(w'w'')=s(w''w')$.
	\end{proof}
	
	Let $G$ be a group and $A$ a semilattice of groups. If $A$ is commutative, then any twisted partial action of $G$ on $A$ splits into a partial action $\0=\{\0_x:\cD_{x\m}\to\cD_x\}_{x\in G}$ of $G$ on $A$ and a {\it twisting related to $(A,\0)$}, i.\,e. a collection $w=\{w_{x,y}\}_{x,y\in G}$ of invertible multipliers of $\cD_x\cD_{xy}$, satisfying $w_{1,x}=w_{x,1}=\id_{\cD_x}$ and 
	\begin{align}\label{eq-2-coc-id-comm}
	\0_x(aw_{y,z})w\m_{xy,z}w_{x,yz}w\m_{x,y}=\0_x(a),\ \ a\in\cD_{x\m}\cD_y\cD_{yz}.
	\end{align}
	Here we used \cref{lem-M(S)-is-comm} restricting $w_{xy,z}$, $w_{x,yz}$ and $w_{x,y}$ to the (idempotent) ideal $\cD_x\cD_{xy}\cD_{xyz}$ which contains both $\0_x(aw_{y,z})$ and $\0_x(a)$. Observe that \cref{eq-2-coc-id-comm} is the same as
	\begin{align*}
	\0_x(\0_{x\m}(a)w_{y,z})w\m_{xy,z}w_{x,yz}w\m_{x,y}=a,\ \ a\in\cD_x\cD_{xy}\cD_{xyz}.
	\end{align*}
	Now $(\0,w)$ is equivalent to $(\0',w')$ if and only if $\0=\0'$ and $w$ is {\it equivalent} to $w'$ in the sense that there exists $\e=\{\e_x\in\cU{\cM{\cD_x}}\}_{x\in G}$, such that
	$$
	aw'_{x,y}=\0_x(\0_{x\m}(a)\e_y)\e\m_{xy}\e_xw_{x,y},\ \ a\in\cD_x\cD_{xy}.
	$$
	This motivates us to introduce the following notions.
	\begin{defn}\label{defn-part-G-mod}
		Let $G$ be a group. A {\it partial $G$-module} is a semilattice of abelian groups $A$ with a partial action of $G$ on $A$.
	\end{defn}
	
	Given a partial $G$-module $(A,\0)$ and $x_1,\dots,x_n\in G$, we shall write $\cD_{(x_1,\dots,x_n)}$ for $\cD_{x_1}\cD_{x_1x_2}\dots\cD_{x_1\dots x_n}$.
	
	\begin{defn}\label{defn-part-n-cochain}
		Let $(A,\0)$ be a partial $G$-module and $n\ge 1$. A {\it partial $n$-cochain} of $G$ with values in $A$ is a collection $w=\{w(x_1,\dots,x_n)\mid x_1,\dots,x_n\in G\}$, where $w(x_1,\dots,x_n)\in\cU{\cM{\cD_{(x_1,\dots,x_n)}}}$. By a {\it partial $0$-cochain} of $G$ with values in $A$ we mean $w\in\cU{\cM A}$.
	\end{defn}
	It follows from \cref{lem-M(S)-is-comm} that partial $n$-cochains form an abelian group under pointwise multiplication. We denote this group by $C^n(G,A)$.
	
	\begin{rem}\label{rem-C^n-inv-direct-prod}
		Observe that $C^n(G,A)$, $n\ge 1$, is the group of units of 
		$$
		\prod_{(x_1,\dots,x_n)\in G^n}\cM{\cD_{(x_1,\dots,x_n)}}.
		$$
	\end{rem}
	
	\begin{defn}\label{defn-delta^nw}
		Given $n\ge 1$, $w\in C^n(G,A)$ and $a\in\cD_{(x_1,\dots,x_{n+1})}$, define
		\begin{align}
		(\delta^nw)(x_1,\dots,x_{n+1})a&=\0_{x_1}(\0_{x\m_1}(a)w(x_2,\dots,x_{n+1}))\notag\\
		&\prod_{i=1}^n w(x_1,\dots,x_ix_{i+1},\dots,x_{n+1})^{(-1)^i}\notag\\
		&w(x_1,\dots,x_n)^{(-1)^{n+1}}.\label{eq-delta^nw}
		\end{align}
		For $w\in C^0(G,A)$ and $a\in\cD_x$ set 
		\begin{align}\label{eq-delta^0w}
		(\delta^0w)(x)a&=\0_x(\0_{x\m}(a)w)w\m.
		\end{align}
	\end{defn}
	Observe that $\0_{x\m_1}(a)\in\cD_{x\m_1}\cD_{(x_2,\dots,x_{n+1})}$, so $w(x_2,\dots,x_{n+1})$ is applicable in \cref{eq-delta^nw}. The result $\0_{x\m_1}(a)w(x_2,\dots,x_{n+1})$ belongs to $\cD_{x\m_1}\cD_{(x_2,\dots,x_{n+1})}$, since the latter is an idempotent ideal. Therefore, $\0_{x_1}(\0_{x\m_1}(a)w(x_2,\dots,x_{n+1}))$ is an element of $\cD_{(x_1,\dots,x_{n+1})}$. So, the rest of the multipliers in \cref{eq-delta^nw} are obviously applicable and $(\delta^nw)(x_1,\dots,x_{n+1})a\in\cD_{(x_1,\dots,x_{n+1})}$.
	
	\begin{lem}\label{lem-delta^nw-multiplier}
		For all $n\ge 0$ and $x_1,\dots,x_{n+1}\in G$ the map $(\delta^nw)(x_1,\dots,x_{n+1})$ is a multiplier of $\cD_{(x_1,\dots,x_{n+1})}$,  whose right action coincides with the left one.
	\end{lem}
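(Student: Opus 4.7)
The plan is to fix $n\ge 0$ and $x_1,\dots,x_{n+1}\in G$, and to write $I:=\cD_{(x_1,\dots,x_{n+1})}$ for brevity. Since $A$ is a semilattice of abelian groups, so is the ideal $I$; in particular $I$ is commutative with $I^2=I$, and hence \cref{lem-M(S)-is-comm} applies: every multiplier of $I$ has coinciding left and right actions. It will therefore suffice to exhibit $(\delta^nw)(x_1,\dots,x_{n+1})$ as the left action of a single multiplier of $I$, the statement about the right action then being automatic.

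I would then write the right-hand side of \cref{eq-delta^nw} (respectively \cref{eq-delta^0w} when $n=0$), evaluated at $a$, as a product of multipliers of $I$. For each factor $w(x_1,\dots,x_ix_{i+1},\dots,x_{n+1})^{(-1)^i}$ and $w(x_1,\dots,x_n)^{(-1)^{n+1}}$, note that dropping one of the products $x_1\cdots x_i$ or $x_1\cdots x_{n+1}$ from the chain of ideals only enlarges the resulting domain, so $I$ is contained in each of $\cD_{(x_1,\dots,x_ix_{i+1},\dots,x_{n+1})}$ and $\cD_{(x_1,\dots,x_n)}$. Since every ideal of $A$ is idempotent, any invertible multiplier of such a larger ideal restricts to an invertible multiplier of the subideal $I$; this takes care of all the $w(\cdots)^{\pm1}$ factors.

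The remaining factor is the map $\phi:I\to I$ sending $a$ to $\0_{x_1}(\0_{x\m_1}(a)w(x_2,\dots,x_{n+1}))$ (and to $\0_x(\0_{x\m}(a)w)$ when $n=0$). By property \cref{0_x(D_(x^(-1))D_y)} of a (twisted) partial action, $\0_{x_1}$ restricts to a bijection from $\cD_{x\m_1}\cap\cD_{(x_2,\dots,x_{n+1})}$ onto $I$, which at once explains why $w(x_2,\dots,x_{n+1})$ is applicable to $\0_{x\m_1}(a)$ and why $\phi$ takes values in $I$. A direct calculation, using that $\0_{x_1}$ is a semigroup homomorphism on its domain and that $w(x_2,\dots,x_{n+1})$ commutes with elements of $\cD_{(x_2,\dots,x_{n+1})}$ by \cref{lem-M(S)-is-comm}, gives $\phi(ab)=a\phi(b)$ for $a,b\in I$; combined with the commutativity of $I$, this identifies $\phi$ as the left action of a multiplier of $I$.

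The conclusion then follows, because the composition of multipliers is a multiplier. The main obstacle is not the multiplier-nature of the individual factors---which is either immediate by restriction or a one-line computation for $\phi$---but rather the careful bookkeeping of the ideal inclusions $I\subseteq\cD_{(\cdots)}$ and of the applicability of each multiplier at each step, all of which rests on property \cref{0_x(D_(x^(-1))D_y)} of a partial action together with the fact that every ideal in a semilattice of groups is idempotent.
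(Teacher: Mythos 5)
Your proof is correct, and it organizes the argument somewhat differently from the paper's. The paper uses the very same decomposition you do---it writes $(\delta^nw)(x_1,\dots,x_{n+1})a=\0_{x_1}(\0_{x\m_1}(a)w')w''$ with $w'=w(x_2,\dots,x_{n+1})$ and $w''$ the product of the remaining $w$-factors---but then verifies the multiplier identity $(\delta^nw)(x_1,\dots,x_{n+1})(ab)=\bigl((\delta^nw)(x_1,\dots,x_{n+1})a\bigr)b$ by a single direct chain of equalities, using that $\0_{x_1}$ is an isomorphism on its domain together with \cref{lem-M(S)-is-comm}, and remarks that the other two multiplier axioms are proved similarly; the applicability bookkeeping you stress is exactly what the paper records in the paragraph following \cref{defn-delta^nw}, before the lemma. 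You instead exhibit the map as a composition of honest multipliers of $I=\cD_{(x_1,\dots,x_{n+1})}$: the $w$-factors restricted to the subideal $I$ (a legitimate step---for $a\in I$ one has $wa=(w(aa\m))a\in I$ by idempotency of ideals, and the paper itself uses this restriction device right after \cref{eq-2-coc-id-comm}), together with the conjugation map $\phi$, and then you invoke closure of $\cM{I}$ under composition. Your recognition of $\phi$ as a multiplier parallels what the paper does later, in \cref{lem-w-inside-0}, for the companion map $a\mapsto\0_{x\m}(\0_x(a)w')$, there via \cite[Proposition 2.7]{DE}. What your route buys: all three multiplier axioms follow at once from $\phi(ab)=a\phi(b)$ plus commutativity of $I$, and the coincidence of the right action with the left one is automatic from \cref{lem-M(S)-is-comm} applied to $I$, whereas the paper's route buys brevity, never needing the restriction fact or the isolation of $\phi$. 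One micro-remark: for $\phi(ab)=a\phi(b)$ you only need the multiplier axiom $(st)w=s(tw)$ for $w'$, not its commutation with elements, though the latter of course also works.
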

	\begin{proof}
		Let $n\ge 1$. According to \cref{eq-delta^nw} we may write 
		\begin{align}\label{eq-delta^nw-using-w'w''}
		(\delta^nw)(x_1,\dots,x_{n+1})a=\0_{x_1}(\0_{x\m_1}(a)w')w'',
		\end{align}
		where
		\begin{align*}
		w'&=w(x_2,\dots,x_{n+1}),\\
		w''&=\left(\prod_{i=1}^n w(x_1,\dots,x_ix_{i+1},\dots,x_{n+1})^{(-1)^i}\right)w(x_1,\dots,x_n)^{(-1)^{n+1}}.
		\end{align*}
		We shall check the equality
		$$
		(\delta^nw)(x_1,\dots,x_{n+1})(ab)=((\delta^nw)(x_1,\dots,x_{n+1})a)b
		$$
		for $a,b\in\cD_{(x_1,\dots,x_{n+1})}$. The other two properties of a multiplier are proved similarly. Taking into account \cref{lem-M(S)-is-comm,eq-delta^nw-using-w'w''}, we have
		\begin{align*}
		(\delta^nw)(x_1,\dots,x_{n+1})(ab)&=\0_{x_1}(\0_{x\m_1}(ab)w')w''=\0_{x_1}(\0_{x\m_1}(a)\0_{x\m_1}(b)w')w''\\
		&=\0_{x_1}(\0_{x\m_1}(a)w'\0_{x\m_1}(b))w''=\0_{x_1}(\0_{x\m_1}(a)w')bw''\\
		&=\0_{x_1}(\0_{x\m_1}(a)w')w''b=((\delta^nw)(x_1,\dots,x_{n+1})a)b.
		\end{align*}
		
		The case $n=0$ uses the same idea (take $w'=w$ and $w''=w\m$).
	\end{proof}
	
	\begin{lem}\label{lem-w-inside-0}
		For all $a\in\cD_{x\m}\cD_{(y_1,\dots,y_n)}$, $w\in\cM{\cD_{x\m}\cD_{(y_1,\dots,y_n)}}$ and $w'\in\cM{\cD_{(x,y_1,\dots,y_n)}}$  one has $\0_{x\m}(\0_x(aw)w')=\0_{x\m}(\0_x(a)w')w$.
	\end{lem}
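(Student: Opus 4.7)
The idea is to transfer $w$ across the isomorphism $\0_x$ by exploiting the induced isomorphism on multiplier monoids, slide the transferred multiplier past $w'$ using the commutativity supplied by \cref{lem-M(S)-is-comm}, and then transfer back via $\0_{x\m}$.

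First I would observe that $\0_x$ restricts to a semigroup isomorphism $\cD_{x\m}\cD_{(y_1,\dots,y_n)}\to\cD_{(x,y_1,\dots,y_n)}$, a routine consequence of axiom (ii) in the definition of a partial action together with the fact that in an inverse semigroup products of ideals coincide with intersections. This isomorphism induces an isomorphism $\tilde\0_x$ of the corresponding multiplier monoids, characterised by $\tilde\0_x(w)=\0_x\circ w\circ\0_{x\m}$, for which the transfer identities
\begin{equation*}
\0_x(cw)=\0_x(c)\tilde\0_x(w),\qquad \0_{x\m}(b\tilde\0_x(w))=\0_{x\m}(b)w
\end{equation*}
hold whenever $c\in\cD_{x\m}\cD_{(y_1,\dots,y_n)}$ and $b\in\cD_{(x,y_1,\dots,y_n)}$; these are immediate from the definition of $\tilde\0_x$ after a short check that the multiplier axioms are preserved.

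With these identities in hand, the lemma reduces to a short formal calculation. The first transfer identity rewrites $\0_x(aw)w'$ as $(\0_x(a)\tilde\0_x(w))w'$. Since $\tilde\0_x(w)$ and $w'$ are now multipliers of the same ideal $\cD_{(x,y_1,\dots,y_n)}$, \cref{lem-M(S)-is-comm} lets me rearrange this to $(\0_x(a)w')\tilde\0_x(w)$. Finally the second transfer identity produces
\begin{equation*}
\0_{x\m}\bigl((\0_x(a)w')\tilde\0_x(w)\bigr)=\0_{x\m}(\0_x(a)w')w,
\end{equation*}
which is the claimed right-hand side.

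The main obstacle, modest as it is, will be the bookkeeping around well-definedness: each multiplier must act on the ideal on which it is defined, and the rearrangement via commutativity cannot be invoked until both $\tilde\0_x(w)$ and $w'$ have been brought into the same multiplier monoid. Once the transfer isomorphism $\tilde\0_x$ is set up correctly, the rest of the argument is a sequence of routine identities.
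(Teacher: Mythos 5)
Your proposal is correct and takes essentially the same approach as the paper, just in mirror image: where you push $w$ forward to a multiplier $\tilde\0_x(w)$ of $\cD_{(x,y_1,\dots,y_n)}$, commute it past $w'$ via \cref{lem-M(S)-is-comm}, and transfer back, the paper instead pulls $w'$ back to a multiplier $\bar w'$ of $\cD_{x\m}\cD_{(y_1,\dots,y_n)}$ (invoking Proposition 2.7 of \cite{DE} for the conjugation transfer) and commutes it past $w$ by the same lemma. Both arguments rest on exactly the same two ingredients --- conjugating a multiplier across the isomorphism $\0_x$ and the commutativity of multipliers on an idempotent commutative ideal --- so the difference is purely cosmetic.
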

	\begin{proof} By \cite[Proposition 2.7]{DE} the pair of maps  $a \mapsto \0_{x\m}(\0_x(a)w')$ and 
		$a \mapsto \0_{x\m}(w'\0_x(a))$ defines a multiplier $\bar{w}'$ of the idempotent ideal $\cD_{x\m}\cD_{(y_1,\dots,y_n)}$. Then our statement  transforms into the equality 
		$(aw)\bar{w}' = (a\bar{w}')w$, which holds thanks to \cref{lem-M(S)-is-comm}.
	\end{proof}
	
	\begin{lem}\label{lem-delta^n-homo}
		For all $n\ge 0$ the map $\delta^n$ is a homomorphism $C^n(G,A)\to C^{n+1}(G,A)$.
	\end{lem}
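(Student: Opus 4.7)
The plan is to unfold $(\delta^n(wu))(\vec x)\cdot a$ and $(\delta^n(w)(\vec x)\cdot\delta^n(u)(\vec x))\cdot a$ for arbitrary $w,u\in C^n(G,A)$, $\vec x=(x_1,\dots,x_{n+1})\in G^{n+1}$ and $a\in\cD_{(x_1,\dots,x_{n+1})}$, and to show they agree by shuffling multipliers across $\0_{x_1}$. Write $w'=w(x_2,\dots,x_{n+1})$, $u'=u(x_2,\dots,x_{n+1})$ and let $w''$, $u''$ denote the trailing products appearing in \cref{eq-delta^nw} (the case $n=0$ is the degenerate specialization $w'=w$, $w''=w\m$ and the argument is a direct specialization). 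Since multipliers of a commutative idempotent ideal commute pointwise by \cref{lem-M(S)-is-comm}, pointwise multiplication in $C^n(G,A)$ gives $(wu)'=w'u'$ and $(wu)''=w''u''$, so \cref{eq-delta^nw} yields
$$
(\delta^n(wu))(\vec x)\cdot a=\0_{x_1}\bigl(\0_{x\m_1}(a)\cdot w'u'\bigr)\cdot w''u''.
$$

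To unfold the composed multiplier $\delta^n(w)(\vec x)\cdot\delta^n(u)(\vec x)$, I first note that the product of multipliers on a commutative idempotent ideal coincides with their composition, so I apply $\delta^n(u)(\vec x)$ to $a$ and then $\delta^n(w)(\vec x)$ to the result:
\begin{align*}
(\delta^n(w)(\vec x)\cdot\delta^n(u)(\vec x))\cdot a&=\delta^n(w)(\vec x)\cdot\bigl(\0_{x_1}(\0_{x\m_1}(a)u')u''\bigr)\\
&=\0_{x_1}\bigl(\0_{x\m_1}(\0_{x_1}(\0_{x\m_1}(a)u')u'')\cdot w'\bigr)\cdot w''.
\end{align*}
\cref{lem-w-inside-0}, in which the roles of $a$, $w$ and $w'$ are played by $\0_{x\m_1}(a)$, $u'$ and $u''$, collapses the innermost expression to $\0_{x\m_1}(au'')u'$; a single commutation of $u'$ and $w'$ then yields $\0_{x_1}(\0_{x\m_1}(au'')\cdot w'u')\cdot w''$. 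Next, applying $\0_{x_1}$ to both sides of the identity of \cref{lem-w-inside-0} gives the equivalent formulation $\0_{x_1}(\0_{x\m_1}(a)v_1)v_2=\0_{x_1}(\0_{x\m_1}(av_2)v_1)$, and invoking it with $v_1=w'u'$ and $v_2=u''$ pulls $u''$ back outside. A final commutation of $u''$ with $w''$ reproduces exactly the expression displayed above.

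Once the homomorphism identity $\delta^n(wu)=\delta^n(w)\delta^n(u)$ is established at the level of multipliers, membership of $\delta^n(w)$ in $C^{n+1}(G,A)$ (i.e.\ invertibility of each $(\delta^n(w))(\vec x)$) is automatic: a glance at \cref{eq-delta^nw,eq-delta^0w} confirms $\delta^n(1)=1$, and therefore $\delta^n(w)\delta^n(w\m)=1$ in $C^{n+1}(G,A)$. The main obstacle is the interchange of the trailing factor $u''$, which lives on $\cD_{(x_1,\dots,x_{n+1})}$, with the outer action of $\0_{x_1}$ needed to bring it into the argument and subsequently to pull it out again; this is precisely the manipulation tailored for \cref{lem-w-inside-0}, so the whole argument reduces to two applications of that lemma combined with the commutativity supplied by \cref{lem-M(S)-is-comm}.
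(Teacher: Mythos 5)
Your proof is correct and follows essentially the same route as the paper's: the same decomposition of \cref{eq-delta^nw} into the inner factor $w'$ and trailing factor $w''$, the same two applications of \cref{lem-w-inside-0} (your ``equivalent formulation'' is just that lemma read with $x$ and $x\m$ interchanged, which the paper also uses implicitly) together with the commutativity from \cref{lem-M(S)-is-comm}, and your closing unit argument ($\delta^n(1)=1$, hence $\delta^n(w)\delta^n(w\m)=1$) is the same ``monoid homomorphism sends units to units'' reasoning the paper packages via \cref{rem-C^n-inv-direct-prod}, with multiplierhood of each $(\delta^nw)(\vec x)$ supplied, as in the paper, by the earlier \cref{lem-delta^nw-multiplier}.
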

	\begin{proof}
		We shall prove that $\delta^n$ is a homomorphisms of monoids
		$$
		C^n(G,A)\to\prod_{(x_1,\dots,x_{n+1})\in G^{n+1}}\cM{\cD_{(x_1,\dots,x_{n+1})}}.
		$$
		In view of \cref{rem-C^n-inv-direct-prod} this will imply that $\delta^n(C^n(G,A))\subseteq C^{n+1}(G,A)$.
		
		The fact that $\delta^n$ maps identity to identity is clear from \cref{eq-delta^nw}. Fix $n\ge 1$, $u,v\in C^n(G,A)$, $x_1,\dots,x_{n+1}\in G$ and $a\in\cD_{(x_1,\dots,x_{n+1})}$. We need to show that
		\begin{align}\label{eq-delta^nuv}
		(\delta^n(uv))(x_1,\dots,x_{n+1})a=(\delta^nu)(x_1,\dots,x_{n+1})(\delta^nv)(x_1,\dots,x_{n+1})a.
		\end{align}
		Using \cref{eq-delta^nw-using-w'w''}, we represent the right-hand side of \cref{eq-delta^nuv} as
		\begin{align*}
		(\delta^nu)(x_1,\dots,x_{n+1})\0_{x_1}(\0_{x\m_1}(a)v')v''=\0_{x_1}(\0_{x\m_1}(\0_{x_1}(\0_{x\m_1}(a)v')v'')u')u'',
		\end{align*}
		where $u',v'\in\cM{\cD_{(x_2,\dots,x_{n+1})}}$ and $u'',v''\in\cM{\cD_{(x_1,\dots,x_{n+1})}}$. By \cref{lem-w-inside-0}
		$$
		\0_{x\m_1}(\0_{x_1}(\0_{x\m_1}(a)v')v'')=\0_{x\m_1}(\0_{x_1}(\0_{x\m_1}(a))v'')v'=\0_{x\m_1}(av'')v'.
		$$
		Therefore,
		$$
		\0_{x_1}(\0_{x\m_1}(\0_{x_1}(\0_{x\m_1}(a)v')v'')u')u''=\0_{x_1}(\0_{x\m_1}(av'')v'u')u'',
		$$
		the latter being $\0_{x_1}(\0_{x\m_1}(a)v'u')v''u''$ in view of \cref{lem-w-inside-0}. The result now follows from the observation that $(uv)'=u'v'$ and $(uv)''=u''v''$, where $(uv)'$ and $(uv)''$ denote the ``parts'' of $(\delta^n(uv))(x_1,\dots,x_{n+1})a$ from the representation similar to \cref{eq-delta^nw-using-w'w''}.
		
		The case $n=0$ is proved analogously.
	\end{proof}
	
	Given a partial $G$-module $(A,\0)$, as it was mentioned above, there exist an $E$-unitary semigroup $S$ and an epimorphism $\kappa:S\to G$ whose kernel coincides with $\sigma$, such that \cref{lambda-from-Theta} defines an $S$-module structure $(\alpha,\lambda)$ on $A$. As to formula \cref{f-from-Theta}, it can be generalized to arbitrary $n\ge 0$. 
	
	\begin{lem}\label{lem-from-w-to-f}
		For all $n\ge 0$ there is a homomorphism from $C^n(G,A)$ to $C^n_\le(S^1,A^1)$ which maps $w$ to $f$ defined by 
		\begin{align}
		f(e)&=\alpha(e)w, & n=0,\label{eq-f(e)}\\
		f(s_1,\dots,s_n)&=\alpha(s_1\dots s_ns\m_n\dots s\m_1)w(\kappa(s_1),\dots,\kappa(s_n)), & n\ge 1,\label{eq-f(s_1...s_n)}
		\end{align}
		where $e\in E(S)$, $s_1,\dots,s_n\in S$.
	\end{lem}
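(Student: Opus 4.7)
The plan is to verify three things in order: first that the formula produces a well-defined cochain (i.e. the multiplier actually applies and the result lies in the claimed component), then that the resulting cochain is order-preserving, then that $w\mapsto f$ respects multiplication. I expect the main obstacle to be the bookkeeping around idempotent ideals: showing that the idempotent $\alpha(s_1\dots s_ns\m_n\dots s\m_1)$ really lies in every one of the ideals $\cD_{\kappa(s_1\dots s_i)}$, so that the multiplier $w(\kappa(s_1),\dots,\kappa(s_n))$ can be applied to it.

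For \textbf{Step 1 (well-definedness)}, write $e=s_1\dots s_ns\m_n\dots s\m_1$ and put $x_i=\kappa(s_i)$. Since $s_1\dots s_i\in\kappa(s_1\dots s_i)=x_1\dots x_i$, the description \cref{D-from-Lambda} of $\cD_{x_1\dots x_i}$ gives $A_{\alpha((s_1\dots s_i)(s_1\dots s_i)\m)}\subseteq \cD_{x_1\dots x_i}$. From $e\le(s_1\dots s_i)(s_1\dots s_i)\m$ (a standard inverse-semigroup fact) I get $\alpha(e)=\alpha(e)\alpha((s_1\dots s_i)(s_1\dots s_i)\m)$, and since $\cD_{x_1\dots x_i}$ is an ideal, this product lies in $\cD_{x_1\dots x_i}$. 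Hence $\alpha(e)$ lies in $\cD_{(x_1,\dots,x_n)}$, so $w(x_1,\dots,x_n)$ is applicable to it. Using \cref{eq-ws-inv} and \cref{lem-M(S)-is-comm} the element $\alpha(e)w(x_1,\dots,x_n)$ is inverse to $w(x_1,\dots,x_n)\m\alpha(e)$ and the product of the two equals $\alpha(e)$, showing $f(s_1,\dots,s_n)\in A_{\alpha(e)}$. The case $n=0$ is the same argument with the ideal $A$ itself.

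For \textbf{Step 2 (order preservation)}, suppose $s_i\le t_i$ for $i=1,\dots,n$. Then $s_1\dots s_n\le t_1\dots t_n$, hence $e\le e':=t_1\dots t_nt\m_n\dots t\m_1$ and $\alpha(e)\le\alpha(e')$. Moreover $s_i\le t_i$ forces $s_i\mathrel\sigma t_i$, so $\kappa(s_i)=\kappa(t_i)=x_i$ and the multiplier factor is literally the same on both sides. Using \cref{lem-M(S)-is-comm} together with the associativity identities for multipliers,
\begin{align*}
\alpha(e)f(t_1,\dots,t_n)&=\alpha(e)\bigl(\alpha(e')w(x_1,\dots,x_n)\bigr)=\alpha(e)\alpha(e')w(x_1,\dots,x_n)\\
&=\alpha(e)w(x_1,\dots,x_n)=f(s_1,\dots,s_n),
\end{align*}
which is exactly $f(s_1,\dots,s_n)\le f(t_1,\dots,t_n)$. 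The $n=0$ case is identical.

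For \textbf{Step 3 (homomorphism)}, let $w_1,w_2\in C^n(G,A)$ and write $w_i'=w_i(x_1,\dots,x_n)$. Because both $w_1'$ and $w_2'$ act on the same idempotent ideal $\cD_{(x_1,\dots,x_n)}$ containing $\alpha(e)$, \cref{lem-M(S)-is-comm} and the multiplier axioms \cref{D_x^2=D_x,0_x(D_(x^(-1))D_y)} (properties (i)--(iii) of a multiplier) give
\begin{align*}
\bigl(\alpha(e)w_1'\bigr)\bigl(\alpha(e)w_2'\bigr)=\alpha(e)\alpha(e)w_1'w_2'=\alpha(e)(w_1w_2)(x_1,\dots,x_n),
\end{align*}
so that $f_{w_1w_2}=f_{w_1}\cdot f_{w_2}$ pointwise. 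Combined with the fact that the identity multiplier is sent to the constant function $\alpha(\,\cdot\,)$, which is the identity of $C^n_\le(S^1,A^1)$, this yields the homomorphism property, completing the proof.
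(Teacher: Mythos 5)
Your proof is correct, and its overall skeleton (applicability of the multiplier, membership in the right group component, order-preservation, multiplicativity) matches the paper's; but at the key step — showing $\alpha(s_1\dots s_ns\m_n\dots s\m_1)\in\cD_{(\kappa(s_1),\dots,\kappa(s_n))}$ — you take a genuinely different route. The paper works in the concrete model $S=E(A)*_\0 G$: it writes $s_i=e_i\delta_{x_i}$, computes $s_1\dots s_n=e\delta_x$ directly, and reads off that $e=\alpha(ss\m)$ lies in $E(\cD_{(x_1,\dots,x_n)})$, which settles applicability in one line. You instead argue model-free: from \cref{D-from-Lambda} you get $A_{\alpha(pp\m)}\subseteq\cD_{x_1\dots x_i}$ for $p=s_1\dots s_i$, then use $e\le pp\m$ and ideal absorption to place $\alpha(e)$ in each $\cD_{x_1\dots x_i}$, hence (by the standing fact that intersections of ideals of an inverse semigroup coincide with their products) in $\cD_{(x_1,\dots,x_n)}$. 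Your version buys independence from the crossed-product realization — it works for any $E$-unitary $S$ with $\ker\kappa=\sigma$ carrying the induced module structure — at the cost of a little extra bookkeeping; the paper's buys brevity by exploiting the identification $S=E(A)*_\0 G$, which it invokes anyway for \cref{lem-from-f-to-w}. Your Steps 2 and 3 are the paper's arguments made explicit: the paper justifies order-preservation by the same three facts you list ($e\le e'$, $\alpha$ order-preserving, $\kappa(s_i)=\kappa(t_i)$ since $\ker\kappa=\sigma$), and your computation $\bigl(\alpha(e)w_1'\bigr)\bigl(\alpha(e)w_2'\bigr)=\alpha(e)w_1'w_2'$ is exactly the paper's observation $e(ww')=(ew)(ew')$ in right-handed form. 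One slip worth fixing: in Step 3 you cite \cref{D_x^2=D_x,0_x(D_(x^(-1))D_y)}, which are axioms of a twisted partial action, not the multiplier axioms (i)--(iii) from \cref{subsec-tw-pact} that you actually use; the parenthetical shows you meant the latter, but the cross-reference is wrong.
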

	\begin{proof}
		The case $n=0$ is immediate: $\alpha$, being an isomorphism $E(S)\to E(A)$, preserves the order and hence $f$ does. Moreover, since $w\in\cU{\cM A}$, we see from \cref{eq-ws-inv,eq-f(e)} that $f(e)f(e)\m=\alpha(e)ww\m=\alpha(e)$. Thus, $f\in C^0_\le(S^1,A^1)$.
		
		When $n\ge 1$, we first note that the right-hand side of \cref{eq-f(s_1...s_n)} makes sense, as for $s_i=e_i\delta_{x_i}$, $e_i\in E(\cD_{x_i})$, one has $x_i=\kappa(s_i)$, $1\le i\le n$, and $s:=s_1\dots s_n=e\delta_x$, where $e\in E(\cD_{(x_1,\dots,x_n)})$ and $x=x_1\dots x_n$. So, $\alpha(ss\m)=e$ and thus $w(\kappa(s_1),\dots,\kappa(s_n))$ is applicable in \cref{eq-f(s_1...s_n)}. 
		
		As above for $n=0$, one easily gets from \cref{eq-ws-inv} that
		$$
		f(s_1,\dots,s_n)f(s_1,\dots,s_n)\m=\alpha(s_1\dots s_ns\m_n\dots s\m_1).
		$$
		Moreover, if $s_i\le t_i$, $1\le i\le n$, then
		$$
		f(s_1,\dots,s_n)\le f(t_1,\dots,t_n),
		$$
		because
		$$
		s_1\dots s_ns\m_n\dots s\m_1\le t_1\dots t_nt\m_n\dots t\m_1,
		$$
		$\alpha$ preserves the order and $\kappa(s_i)=\kappa(t_i)$, as $(s_i,t_i)\in\sigma$, $1\le i\le n$. This shows that $f\in C^n_\le(S^1,A^1)$.
		
		The fact that the map $w\mapsto f$ is a homomorphism is explained by the observation that in a commutative semigroup $S$ one has $e(ww')=(ew)w'=(e(ew))w'=((ew)e)w'=(ew)(ew')$ for all $e\in E(S)$ and $w,w'\in\cM S$.
	\end{proof}
	
	Recall now that we may take $S=E(A)*_\0 G$ with $\alpha:E(S)\to E(A)$ and $\kappa: S\to G$ given by \cref{kappa(e-delta_x),alpha(e-delta_1)}. Notice that for any $x\in G$ and $a\in\cD_x$ we have that $s=aa\m\delta_x$ is the unique element of $S$ such that $\kappa(s)=x$ and $\alpha(ss\m)=aa\m$. 
	
	\begin{lem}\label{lem-from-f-to-w}
		Let $n\ge 0$ and $f\in C^n_\le(S^1,A^1)$. If $n=0$, then for any $a\in A$ define
		\begin{align}\label{eq-wa}
		wa=aw=f(\alpha\m(aa\m))a.
		\end{align}
		When $n\ge 1$, given $x_1,\dots,x_n\in G$ and $a\in\cD_{(x_1,\dots,x_n)}$, choose a unique $n$-tuple $(s_1,\dots,s_n)\in S^n$, such that $\kappa(s_i)=x_1\dots x_i$ and $\alpha(s_is\m_i)=aa\m$, $1\le i\le n$. Then set\footnote{When $n=1$, the right-hand term  of \cref{eq-w(x_1...x_n)a}  is $f(s_1)a$.}
		\begin{align}\label{eq-w(x_1...x_n)a}
		w(x_1,\dots,x_n)a=aw(x_1,\dots,x_n)=f(s_1,s\m_1 s_2,\dots,s\m_{n-1} s_n)a.
		\end{align}
		This defines a homomorphism from $C^n_\le(S^1,A^1)$ to $C^n(G,A)$, $n\ge 0$.
	\end{lem}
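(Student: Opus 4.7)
The plan is to verify three things: that each $w(x_1,\dots,x_n)$ is a well-defined multiplier of $\cD_{(x_1,\dots,x_n)}$ whose left and right actions agree, that it is invertible, and that $f\mapsto w$ is a homomorphism.

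For $n=0$, since $f(\alpha\m(aa\m))\in A_{aa\m}$, the product $f(\alpha\m(aa\m))a$ is the natural group operation in $A_{aa\m}$, so $w$ commutes with every element of $A$ and the three multiplier axioms reduce, via commutativity of $A$, to the single identity $w(ab)=(wa)b$. This in turn follows from $(ab)(ab)\m=aa\m\cdot bb\m$ combined with the formula $f(\alpha\m(aa\m)\cdot\alpha\m(bb\m))=bb\m\cdot f(\alpha\m(aa\m))$, which is exactly \cref{f(ee')=alpha(e')f(e)}. Invertibility is produced from the pointwise inverse $f\m(e):=f(e)\m$, which lies again in $C^0_\le(S^1,A^1)$ because inversion within a component of a semilattice of abelian groups preserves the order.

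For $n\ge 1$, I take $S=E(A)*_\0 G$ as in \cref{alpha(e-delta_1)}; then uniqueness of the tuple $(s_1,\dots,s_n)$ is transparent, with $s_i=\alpha\m(aa\m)\delta_{x_1\dots x_i}$. A direct crossed-product calculation gives $s_is_i\m=\alpha\m(aa\m)\delta_1$, and hence $f(s_1,s_1\m s_2,\dots,s_{n-1}\m s_n)\in A_{aa\m}$; this makes \cref{eq-w(x_1...x_n)a} well-defined and shows that its left and right actions coincide. To check the multiplier identity $w(x_1,\dots,x_n)(ab)=(w(x_1,\dots,x_n)a)b$, write $(s_i^a)$ and $(s_i^{ab})$ for the tuples associated to $a$ and to $ab$. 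Since $(ab)(ab)\m\le aa\m$, one gets $s_i^{ab}\le s_i^a$, and hence $(s_i^{ab})\m s_{i+1}^{ab}\le (s_i^a)\m s_{i+1}^a$ by compatibility of $\le$ with multiplication and inversion. Order-preservingness of $f$ together with the same argument that yields \cref{f(ee')=alpha(e')f(e)} then produces $f(s_1^{ab},\dots,(s_{n-1}^{ab})\m s_n^{ab})=bb\m\cdot f(s_1^a,\dots,(s_{n-1}^a)\m s_n^a)$. The desired identity now reduces to formal rearrangement in the commutative semigroup $A$; the remaining two multiplier axioms follow analogously, and invertibility is again produced from the pointwise inverse of $f$.

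The homomorphism property is essentially automatic: if $f,f'$ map to $w,w'$ respectively, then $f'(s_1,\dots,s_{n-1}\m s_n)a$ again lies in $A_{aa\m}$, so its tuple of $s_i$'s coincides with the one for $a$, and therefore $(ww')(x_1,\dots,x_n)a=w(x_1,\dots,x_n)(f'(s_1,\dots)a)=f(s_1,\dots)f'(s_1,\dots)a=(ff')(s_1,\dots)a$, as required. The only genuine obstacle in this proof is the bookkeeping in the multiplier verification for $n\ge 1$, where one must track how the uniquely determined tuple $(s_1,\dots,s_n)$ changes when $a$ is replaced by $ab$ and then invoke order-preservingness of $f$ at the right moment; everything else reduces to routine manipulation in a commutative semilattice of abelian groups.
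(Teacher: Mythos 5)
Your proof is correct and takes essentially the same approach as the paper: well-definedness via the component computation $f(s_1,s\m_1 s_2,\dots,s\m_{n-1}s_n)\in A_{aa\m}$, the multiplier axiom by tracking how the unique tuple changes when $a$ is replaced by $ab$ and then invoking order-preservingness of $f$ together with the absorption argument behind \cref{f(ee')=alpha(e')f(e)}, and the homomorphism property from the observation that $w(x_1,\dots,x_n)a$ stays in $A_{aa\m}$ so the same tuple serves for $f'$. The only blemish is a notational slip: since $\alpha\m(aa\m)=aa\m\delta_1\in E(S)$, the tuple should read $s_i=aa\m\delta_{x_1\dots x_i}$ with $s_is\m_i=aa\m\delta_1=\alpha\m(aa\m)$, rather than $s_i=\alpha\m(aa\m)\delta_{x_1\dots x_i}$.
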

	\begin{proof}
		For $n=0$ we note from \cref{eq-wa} using the order-preserving property of $f$ that
		\begin{align*}
		w(ab)&=f(\alpha\m(abb\m a\m))ab=f(\alpha\m(aa\m bb\m))ab\\
		&=f(\alpha\m(aa\m)\alpha\m(bb\m))ab\le f(\alpha\m(aa\m))ab=(wa)b.
		\end{align*}
		Since both $w(ab)$ and $(wa)b$ belong to the same group component $A_{aa\m bb\m}$ of $A$, they are equal. Due to the equality $wa=aw$ and the commutativity of $A$ this explains that $w$ is a multiplier of $A$. Clearly, $w$ is invertible with $w\m a=aw\m=f(\alpha\m(aa\m))\m a$. So, $w\in C^0(G,A)$. 
		
		Suppose that $f\mapsto w$ and $f'\mapsto w'$ for some $f,f'\in C^0_\le(S^1,A^1)$. As $wa\in A_{aa\m}$, 
		$$
		w'(wa)=f'(\alpha\m(aa\m))wa=f'(\alpha\m(aa\m))f(\alpha\m(aa\m))a,
		$$
		showing that $f'f\mapsto w'w$.
		
		Let $n\ge 1$. It is immediately seen that the right-hand term of \cref{eq-w(x_1...x_n)a} belongs to the ideal $\cD_{(x_1,\dots,x_n)}$. Observe that 
		\begin{align}\label{eq-f-in-A_aa-inv}
		f(s_1,s\m_1 s_2,\dots,s\m_{n-1} s_n)\in A_{\alpha(s_1s\m_1\dots s_ns\m_n)}=A_{aa\m}. 
		\end{align}
		Hence, the function $w(x_1,\dots,x_n)$ from $\cD_{(x_1,\dots,x_n)}$ to itself is a bijection, whose inverse is
		$$
		w(x_1,\dots,x_n)\m a=aw(x_1,\dots,x_n)\m=f(s_1,s\m_1 s_2,\dots,s\m_{n-1} s_n)\m a.
		$$
		
		To prove that $w(x_1,\dots,x_n)$ is a multiplier of $\cD_{(x_1,\dots,x_n)}$, it suffices to verify
		\begin{align}\label{eq-w(x_1...x_n)-multiplier}
		w(x_1,\dots,x_n)(ab)=(w(x_1,\dots,x_n)a)b
		\end{align}
		for $a,b\in\cD_{(x_1,\dots,x_n)}$. Let $(s_1,\dots,s_n)$ and $(t_1,\dots,t_n)$ be the $n$-tuples  in $S^n$ from the definition of $w$, corresponding to $a$ and $b$, respectively. Then the right-hand side of \cref{eq-w(x_1...x_n)-multiplier} equals
		$$
		f(s_1,s\m_1 s_2,\dots,s\m_{n-1} s_n)ab
		$$
		by \cref{eq-w(x_1...x_n)a}. As to the left-hand side of \cref{eq-w(x_1...x_n)-multiplier}, note that
		$$
		ab(ab)\m=aa\m bb\m=\alpha(s_is\m_i t_1t\m_1)=\alpha(t_1t\m_1 s_i(t_1t\m_1 s_i)\m)
		$$
		with $\kappa(t_1t\m_1 s_i)=\kappa(s_i)=x_1\dots,x_i$, $1\le i\le n$. Therefore, $w(x_1,\dots,x_n)(ab)$ is
		\begin{align}
		&f(t_1t\m_1 s_1,s\m_1 t_1t\m_1\cdot  t_1t\m_1  s_2,\dots,s\m_{n-1}  t_1t\m_1 \cdot t_1t\m_1 s_n)ab\notag\\
		&=f(t_1t\m_1 s_1,s\m_1 t_1t\m_1 s_2,\dots,s\m_{n-1} t_1t\m_1 s_n)ab.\label{eq-long-f-ab}
		\end{align}
		Since $t_1t\m_1 s_1\le s_1$ and $s\m_i t_1t\m_1 s_{i+1}\le s\m_i s_{i+1}$, $1\le i\le n-1$, we have
		\begin{align}\label{eq-f-le-f}
		f(t_1t\m_1 s_1,s\m_1 t_1t\m_1 s_2,\dots,s\m_{n-1} t_1t\m_1 s_n)\le f(s_1,s\m_1 s_2,\dots,s\m_{n-1} s_n). 
		\end{align}
		Taking into account \cref{eq-f-in-A_aa-inv} and the fact that the left-hand side of \cref{eq-f-le-f} belongs to
		$$
		A_{\alpha(t_1t\m_1 s_1s\m_1\dots s_ns\m_n)}=A_{aa\m bb\m},
		$$
		one sees that \cref{eq-long-f-ab} is 
		$$
		bb\m f(s_1,s\m_1 s_2,\dots,s\m_{n-1} s_n)ab=f(s_1,s\m_1 s_2,\dots,s\m_{n-1} s_n)ab.
		$$
		This concludes the proof that $w\in C^n(G,A)$.
		
		Take additionally $f'\in C^n_\le(S^1,A^1)$ and let $f'\mapsto w'$. It follows from \cref{eq-f-in-A_aa-inv} that $w(x_1,\dots,x_n)a\in A_{aa\m}$, so to apply $w'(x_1,\dots,x_n)$ to this element, one uses the same $n$-tuple $(s_1,\dots,s_n)$: 
		\begin{align*}
		w'w(x_1,\dots,x_n)a&=f'(s_1,s\m_1 s_2,\dots,s\m_{n-1} s_n)w(x_1,\dots,x_n)a\\
		&=f'f(s_1,s\m_1 s_2,\dots,s\m_{n-1} s_n)a.
		\end{align*}
		Thus, $f'f$ is mapped to $w'w$.
	\end{proof}
	
	\begin{lem}\label{C^n-isom-C^n_<=}
		The group $C^n(G,A)$ is isomorphic to $C^n_\le(S^1,A^1)$ for all $n\ge 0$.
	\end{lem}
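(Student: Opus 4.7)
The plan is to show that the homomorphisms $\Phi\colon C^n(G,A)\to C^n_\le(S^1,A^1)$ from \cref{lem-from-w-to-f} (sending $w\mapsto f$) and $\Psi\colon C^n_\le(S^1,A^1)\to C^n(G,A)$ from \cref{lem-from-f-to-w} (sending $f\mapsto w$) are mutually inverse. Since both are already known to be group homomorphisms, it remains to verify $\Phi\circ\Psi=\id$ and $\Psi\circ\Phi=\id$.

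For $n=0$, the verifications are short. Given $w\in\cU{\cM A}$, write $f=\Phi(w)$, so $f(e)=\alpha(e)w$. Then $\Psi(f)$ acts on $a\in A$ by $f(\alpha\m(aa\m))a=(aa\m\cdot w)a=w(aa\m\cdot a)=wa$, using \cref{lem-M(S)-is-comm} to move $aa\m$ past $w$ and the identity $aa\m a=a$. Conversely, starting from $f\in C^0_\le(S^1,A^1)$, a similar computation shows $\Phi(\Psi(f))(e)=\alpha(e)w=f(\alpha\m(\alpha(e)))\alpha(e)=f(e)\alpha(e)=f(e)$, since $f(e)\in A_{\alpha(e)}$.

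For $n\ge 1$, the $\Psi\circ\Phi=\id$ direction is the easy one: starting from $w\in C^n(G,A)$, for a fixed $a\in\cD_{(x_1,\dots,x_n)}$ choose the $n$-tuple $(s_1,\dots,s_n)\in S^n$ with $\kappa(s_i)=x_1\cdots x_i$ and $\alpha(s_is\m_i)=aa\m$. A direct telescoping computation in the inverse semigroup $S=E(A)*_\theta G$ shows that $s_i\cdot s\m_i s_{i+1}=s_{i+1}$ (because $s_is\m_i=s_{i+1}s\m_{i+1}$), so the ``idempotent part'' of the tuple $(s_1,s\m_1s_2,\dots,s\m_{n-1}s_n)$ equals $s_ns\m_n$. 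Hence $\Phi(w)(s_1,s\m_1s_2,\dots,s\m_{n-1}s_n)=\alpha(s_ns\m_n)w(x_1,\dots,x_n)=aa\m\cdot w(x_1,\dots,x_n)$, and multiplying by $a$ and using commutativity of multipliers with elements of $A$ recovers $w(x_1,\dots,x_n)a$.

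The main obstacle will be the $\Phi\circ\Psi=\id$ direction for $n\ge 1$. Given $f\in C^n_\le(S^1,A^1)$ and a generator $(s_1,\dots,s_n)\in V_n(S)$ with idempotent $e=s_1\cdots s_ns\m_n\cdots s\m_1$, one computes $\Phi(\Psi(f))(s_1,\dots,s_n)=\alpha(e)\cdot w(\kappa(s_1),\dots,\kappa(s_n))$, which by the definition of $\Psi(f)$ applied to $a=\alpha(e)$ equals $f(t_1,t\m_1t_2,\dots,t\m_{n-1}t_n)\alpha(e)$, where $t_i$ is the unique element of $S$ with $\kappa(t_i)=\kappa(s_1\cdots s_i)$ and $t_it\m_i=e$. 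The key observation is that $t_i=e\cdot(s_1\cdots s_i)\le s_1\cdots s_i$, from which $t_1\le s_1$ and $t\m_it_{i+1}\le s\m_i\cdots s\m_1s_1\cdots s_i\cdot s_{i+1}\le s_{i+1}$. Applying the order-preserving property of $f$ yields $f(t_1,t\m_1t_2,\dots,t\m_{n-1}t_n)\le f(s_1,\dots,s_n)$; as both elements live in the single group $A_{\alpha(e)}$, the inequality forces equality, and the result follows.
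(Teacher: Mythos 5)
Your proposal is correct and takes essentially the same approach as the paper: both verify that the homomorphisms of \cref{lem-from-w-to-f,lem-from-f-to-w} are mutually inverse, using the telescoping identity $s_is\m_is_{i+1}=s_{i+1}$ for the direction $w\mapsto f\mapsto w$ and the order-preserving/same-group-component argument for $f\mapsto w\mapsto f$. Your choice $t_i=e\,s_1\cdots s_i$ coincides with the paper's $t_i=s_1\cdots s_i\,e_{i+1}$, so the arguments match in all essentials.
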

	\begin{proof}
		We shall show that the homomorphisms from \cref{lem-from-w-to-f,lem-from-f-to-w} are inverse to each other.
		
		Let $w\in C^n(G,A)$ and $w\mapsto f\mapsto w'$. If $n=0$, then by \cref{eq-f(e),eq-wa}
		$$
		w'a=f(\alpha\m(aa\m))a=(\alpha(\alpha\m(aa\m))w)a=((aa\m) w)a=w(aa\m) a=wa, 
		$$
		so $w'=w$ by \cref{lem-M(S)-is-comm}. Consider now $n\ge 1$ and take $x_1,\dots,x_n\in G$ and $a\in\cD_{(x_1,\dots,x_n)}$. Find a unique $(s_1,\dots,s_n)\in S^n$, such that $\kappa(s_i)=x_1\dots x_i$ and $\alpha(s_is\m_i)=aa\m$, $1\le i\le n$. According to \cref{eq-w(x_1...x_n)a,eq-f(s_1...s_n)}:
		\begin{align*}
		w'(x_1,...,x_n)a&=f(s_1,s\m_1 s_2,\dots,s\m_{n-1} s_n)a\\
		&=\alpha(s_1s\m_1\dots s_ns\m_n)w(\kappa(s_1),\kappa(s\m_1 s_2),\dots,\kappa(s\m_{n-1} s_n))a\\
		&=aa\m w(x_1,x\m_1  x_1x_2,\dots,x\m_{n-1}\dots x\m_1 x_1\dots x_n)a\\
		&=w(x_1,...,x_n)a.
		\end{align*}
		
		Now take $f\in C^n_\le(S^1,A^1)$ and assume that $f\mapsto w\mapsto f'$. For $n=0$ one has
		$$
		f'(e)=\alpha(e)w=f(\alpha\m(\alpha(e)))\alpha(e)=f(e)\alpha(e)=f(e),
		$$
		as $f(e)\in A_{\alpha(e)}$. Let $n\ge 1$ and $s_1,\dots,s_n\in S$.  Set
		$$
		e_i=s_i\dots s_ns\m_n\dots s\m_i\in E(S),\ \ t_i=s_1\dots s_i\cdot e_{i+1},\ \ 1\le i\le n-1, 
		$$
		and $t_n=s_1\dots s_n$. Then $\kappa(t_i)=\kappa(s_1)\dots\kappa(s_i)$ and $\alpha(t_it\m_i)=\alpha(e_1)$, $1\le i\le n$. So, in view of \cref{eq-f(s_1...s_n),eq-w(x_1...x_n)a}:
		\begin{align*}
		f'(s_1,\dots,s_n)&=\alpha(s_1\dots s_ns\m_n\dots s\m_1)w(\kappa(s_1),\dots,\kappa(s_n))\\
		&=\alpha(e_1)f(t_1,t\m_1 t_2,\dots,t\m_{n-1} t_n).
		\end{align*}
		
		Clearly, $t_1=s_1e_2\le s_1$. Moreover,
		$$
		t\m_i t_{i+1}=e_{i+1}\cdot s\m_i\dots s\m_1 s_1\dots s_i\cdot s_{i+1}\cdot e_{i+2}\le s_{i+1},\ \ 1\le i\le n-1.
		$$
		Therefore,
		$$
		f(t_1,t\m_1 t_2,\dots,t\m_{n-1} t_n)\le f(s_1,s_2,\dots, s_n)
		$$
		and hence
		$$
		f'(s_1,\dots,s_n)\le\alpha(e_1)f(s_1,\dots,s_n)=f(s_1,\dots,s_n).
		$$
		Since both $f'(s_1,\dots,s_n)$ and $f(s_1,\dots,s_n)$ belong to the same group component $A_{\alpha(e_1)}$ of $A$, we conclude that they are equal.
	\end{proof}
	
	\begin{rem}\label{w-normalized-iff-f-normalized}
		Observe that $w\in C^2(G,A)$ satisfies $w(1,x)=w(x,1)=\id_{\cD_x}$ for all $x$ if and only if the corresponding $f\in C^2_\le(S^1,A^1)$ satisfies the Sieben's condition \ref{Sieben's-condition}.
	\end{rem}
	\noindent Indeed, if $w(1,x)=\id_{\cD_x}$ for all $x$, then $f(e,s)=\alpha(ess\m)w(1,\kappa(s))=\alpha(ess\m)$ by \cref{eq-f(s_1...s_n)}, and analogously $w(x,1)=\id_{\cD_x}$ for all $x$ implies $f(s,e)=\alpha(ses\m)$. Conversely, assume \ref{Sieben's-condition} and take $x\in G$, $a\in\cD_x$. There is $s\in S$, such that $\kappa(s)=x$ and $\alpha(ss\m)=aa\m$. Since $\kappa(ss\m)=1$, it follows that $w(1,x)a=aw(1,x)=f(ss\m,(ss\m)\m s)a=f(ss\m,s)a=\alpha(ss\m)a=a$ by \cref{eq-w(x_1...x_n)a}. Similarly $w(x,1)a=aw(x,1)=a$.
	
	\begin{lem}\label{w->f-respects-delta}
		The homomorphism from \cref{lem-from-w-to-f} respects $\delta^n$ and $\delta_2^n$, $n\ge 0$, in the sense that the following diagram
		\begin{align}\label{diag-C^n_<=(S^1A^1)-C^n(GA)}
		\begin{tikzpicture}[node distance=4cm, auto]
		\node (C_0) {$C^0(G,A)$};
		\node (C_1) [left of=C_0] {$C^1(G,A)$};
		\node (dots_C_1) [xshift=1.1cm, left of=C_1] {$\dots$};
		\node (D_0) [yshift=2.5cm, below of=C_0]{$C^0_\le(S^1,A^1)$};
		\node (D_1) [left of=D_0] {$C^1_\le(S^1,A^1)$};
		\node (dots_D_1) [xshift=1.1cm, left of=D_1] {$\dots$};
		\draw[->] (C_0) to node[above] {$\delta^0$} (C_1);
		\draw[->] (C_1) to node[above] {$\delta^1$} (dots_C_1);
		\draw[->] (D_0) to node[above] {$\delta_2^0$} (D_1);
		\draw[->] (D_1) to node[above] {$\delta_2^1$} (dots_D_1);
		\draw[-] (C_0) to node {$\wr$} (D_0);
		\draw[-] (C_1) to node {$\wr$} (D_1);
		\end{tikzpicture}
		\end{align}
		commutes.
	\end{lem}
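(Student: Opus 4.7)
The approach is to unfold both sides of the asserted equality and match them factor by factor, using the identity $\lambda_s(a)=\0_{\kappa(s)}(\alpha(s\m s)a)$ from \cref{lambda-from-Theta} to translate between the partial action $\0$ and the semigroup action $\lambda$. I first handle the case $n=0$ by direct calculation: starting from $(\delta^0w)(x)\alpha(ss\m)=\0_x(\0_{x\m}(\alpha(ss\m))w)w\m$ and noting that $\0_{x\m}(\alpha(ss\m))=\alpha(s\m s)$ (which follows from $\lambda_s(\alpha(s\m s))=\alpha(ss\m)$ and the $\0$--$\lambda$ identity above), the leading factor collapses to $\0_x(\alpha(s\m s)w)=\lambda_s(\alpha(s\m s)w)=\lambda_s(f(s\m s))$. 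The trailing multiplier $w\m$, applied to an element of $A_{\alpha(ss\m)}$, coincides with multiplication by the element $\alpha(ss\m)w\m=f(ss\m)\m$, so the whole expression equals $\lambda_s(f(s\m s))f(ss\m)\m=(\delta_2^0 f)(s)$.

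For $n\ge 1$, fix $(s_1,\dots,s_{n+1})\in S^{n+1}$, set $x_i=\kappa(s_i)$, and put $e=s_1\cdots s_{n+1}s_{n+1}\m\cdots s_1\m$, so that the image $f'$ of $\delta^n w$ satisfies $f'(s_1,\dots,s_{n+1})=\alpha(e)(\delta^n w)(x_1,\dots,x_{n+1})$ by \cref{eq-f(s_1...s_n)}. Substituting $a=\alpha(e)$ into \cref{eq-delta^nw} produces $n+2$ factors which must be identified with the corresponding factors of $(\delta_2^n f)(s_1,\dots,s_{n+1})$. For the leading $\0$-factor, I use $\0_{x_1\m}(\alpha(e))=\alpha(s_1\m e s_1)=\alpha(s_1\m s_1)\alpha(s_2\cdots s_{n+1}s_{n+1}\m\cdots s_2\m)$, which turns $\0_{x_1\m}(\alpha(e))w(x_2,\dots,x_{n+1})$ into $\alpha(s_1\m s_1)f(s_2,\dots,s_{n+1})$; applying $\0_{x_1}$ and invoking the defining formula for $\lambda_{s_1}$ yields $\lambda_{s_1}(f(s_2,\dots,s_{n+1}))$. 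For each middle factor $w(x_1,\dots,x_ix_{i+1},\dots,x_{n+1})^{(-1)^i}$, the idempotent attached to $f(s_1,\dots,s_is_{i+1},\dots,s_{n+1})$ is exactly $e$, since grouping $s_is_{i+1}$ inside $s_1\cdots s_{n+1}$ does not change the overall product; hence $f(s_1,\dots,s_is_{i+1},\dots,s_{n+1})=\alpha(e)w(x_1,\dots,x_ix_{i+1},\dots,x_{n+1})$, and, as the running product already lies in $A_{\alpha(e)}$, acting by this multiplier coincides with multiplication by $f(s_1,\dots,s_is_{i+1},\dots,s_{n+1})^{(-1)^i}$. The trailing factor $w(x_1,\dots,x_n)^{(-1)^{n+1}}$ is handled identically, the only subtlety being that $f(s_1,\dots,s_n)$ sits in the strictly larger component $A_{\alpha(s_1\cdots s_ns_n\m\cdots s_1\m)}\supseteq A_{\alpha(e)}$; the extra idempotent prefactor is absorbed upon multiplication against an element of $A_{\alpha(e)}$. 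Collecting the identified factors reproduces $(\delta_2^n f)(s_1,\dots,s_{n+1})$.

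The main obstacle is the bookkeeping: one must verify that every idempotent prefactor arising in the rearrangements collapses correctly into $\alpha(e)$. This is streamlined by \cref{lem-M(S)-is-comm,lem-w-inside-0}, which permit multipliers and elements to be freely commuted inside $\0_{x_1}(\cdots)$, together with the observation that the ``long idempotent'' $e$ is insensitive to the bracketing of the product $s_1\cdots s_{n+1}$.
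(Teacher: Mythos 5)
Your proof is correct and takes essentially the same route as the paper's: the same factor-by-factor identification using \cref{lambda-from-Theta} and \cref{eq-f(s_1...s_n)}, the same observation that the long idempotent $e$ is unchanged by grouping $s_is_{i+1}$, and the same absorption of the larger idempotent attached to the final factor $f(s_1,\dots,s_n)$ --- the paper merely runs the computation in the opposite direction, expanding $(\delta_2^nf)(s_1,\dots,s_{n+1})$ and recognizing it as $e(s_1,\dots,s_{n+1})(\delta^nw)(\kappa(s_1),\dots,\kappa(s_{n+1}))$, and it likewise rewrites $\0_{\kappa(s_1)\m}(e(s_1,\dots,s_{n+1}))=\alpha(s\m_1s_1)e(s_2,\dots,s_{n+1})$ as you do for the leading factor. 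One cosmetic slip: since the group components of $A$ are pairwise disjoint, the inclusion $A_{\alpha(s_1\cdots s_ns\m_n\cdots s\m_1)}\supseteq A_{\alpha(e)}$ is not literally meaningful --- what you want (and clearly intend) is the inequality $e\le s_1\cdots s_ns\m_n\cdots s\m_1$ of idempotents, which is exactly how the paper phrases the absorption step.
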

	\begin{proof}
		Let $w\in C^n(G,A)$. Suppose that $w\mapsto f\in C^n_\le(S^1,A^1)$ and $\delta^nw\mapsto f'\in C^{n+1}_\le(S^1,A^1)$. We need to prove that $f'=\delta_2^nf$. 
		
		Consider the case $n=0$. By \cref{delta_2^0f(s),eq-f(e),lambda-from-Theta,eq-delta^0w,eq-f(s_1...s_n)}
		\begin{align*}
		(\delta_2^0f)(s)&=\lambda_s(f(s\m s))f(ss\m)\m=\0_{\kappa(s)}(\alpha(s\m s)f(s\m s))f(ss\m)\m\\
		&=\0_{\kappa(s)}(\alpha(s\m s)w)\alpha(ss\m)w\m=\0_{\kappa(s)}(\alpha(s\m s)w)w\m\\
		&=\0_{\kappa(s)}(\0_{\kappa(s)\m}(\alpha(ss\m))w)w\m=\alpha(ss\m)(\delta^0w)(\kappa(s))=f'(s).
		\end{align*}
		Here we also used the fact that $\0_{\kappa(s)}(\alpha(s\m s))=\lambda_s(\alpha(s\m s))=\alpha(ss\m)$.

		Let $n\ge 1$. Given $t_1,\dots,t_k\in S$, we shall denote for brevity
		$$
		e(t_1,\dots,t_k)=\alpha(t_1\dots t_kt_k\m\dots t\m_1)\in E(A).
		$$
		
		Using \cref{eq-f(s_1...s_n)}, we see that the factor $f(s_1,\dots,s_is_{i+1},\dots,s_{n+1})^{(-1)^i}$ in \cref{eq-delta_2^nf} equals
		\begin{align}\label{eq-f(s_1...s_is_(i+1)...s_(n+1))}
		e(s_1,\dots,s_{n+1})w(\kappa(s_1),\dots,\kappa(s_is_{i+1}),\dots,\kappa(s_{n+1}))^{(-1)^i},\ \ 1\le i\le n.
		\end{align}
		Since \cref{eq-delta_2^nf} contains $\lambda_{s_1}(f(s_2,\dots,s_{n+1}))\in A_{e(s_1,\dots,s_{n+1})}$, we may easily remove $e(s_1,\dots,s_{n+1})$ from \cref{eq-f(s_1...s_is_(i+1)...s_(n+1))}. Moreover, we may remove $e(s_1,\dots,s_n)$ from
		$$
		f(s_1,\dots,s_n)^{(-1)^{n+1}}=e(s_1,\dots,s_n)w(\kappa(s_1),\dots,\kappa(s_n))^{(-1)^{n+1}},
		$$
		as $e(s_1,\dots,s_{n+1})\le e(s_1,\dots,s_n)$. Taking into account \cref{lambda-from-Theta}, we come to
		\begin{align}
		(\delta_2^nf)(s_1,\dots,s_{n+1})&=\0_{\kappa(s_1)}(\alpha(s\m_1 s_1)e(s_2,\dots,s_{n+1})w(\kappa(s_2),\dots,\kappa(s_{n+1})))\notag\\
		&\prod_{i=1}^nw(\kappa(s_1),\dots,\kappa(s_is_{i+1}),\dots,\kappa(s_{n+1}))^{(-1)^i}\notag\\
		&w(\kappa(s_1),\dots,\kappa(s_n))^{(-1)^{n+1}}.\label{eq-delta_2^nf-from-w}
		\end{align}
		Now observe using \cref{lambda-from-Theta} that
		\begin{align*}
		\alpha(s\m_1 s_1)e(s_2,\dots,s_{n+1})&=e(s\m_1,s_1,\dots,s_{n+1})\\
		&=\lambda_{s\m_1}(e(s_1,\dots,s_{n+1}))\\
		&=\lambda_{s\m_1}(\alpha(s_1s\m_1) e(s_1,\dots,s_{n+1}))\\
		&=\0_{\kappa(s_1)\m}(e(s_1,\dots,s_{n+1})),
		\end{align*}
		the latter showing in view of \cref{eq-delta^nw} that the right-hand side of \cref{eq-delta_2^nf-from-w} is exactly
		$$
		e(s_1,\dots,s_{n+1})(\delta^nw)(\kappa(s_1),\dots,\kappa(s_{n+1}))=f'(s_1,\dots,s_{n+1}).
		$$
	\end{proof}
	
	\begin{cor}\label{cor-C^n(G,A)-cochain-compl}
		The sequence
		\begin{align}\label{C^0(GA)->...}
		C^0(G,A)\overset{\delta^0}{\to}\dots \overset{\delta^{n-1}}{\to}C^n(G,A)\overset{\delta^n}{\to}\dots
		\end{align}
		is a cochain complex of abelian groups. 
	\end{cor}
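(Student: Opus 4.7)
The plan is to reduce the statement to the corresponding fact about the complex $C^\bullet_\le(S^1,A^1)$, which has already been established in \cref{C_le-complex}. Concretely, we need two ingredients: (a) each $\delta^n$ is a group homomorphism $C^n(G,A)\to C^{n+1}(G,A)$ and (b) $\delta^{n+1}\circ\delta^n=0$ for every $n\ge 0$. Ingredient (a) is already \cref{lem-delta^n-homo}, so the work lies entirely in (b).

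For ingredient (b) the natural route is to transport the cochain complex structure along the isomorphism supplied by \cref{C^n-isom-C^n_<=}. Fix an $E$-unitary semigroup $S$ and epimorphism $\kappa:S\to G$ with $\ker\kappa=\sigma$ associated with the partial $G$-module $(A,\0)$ (for concreteness one may take $S=E(A)*_\0 G$), and let $(\alpha,\lambda)$ denote the resulting $S$-module structure on $A$. By \cref{C^n-isom-C^n_<=} the maps $w\mapsto f$ of \cref{lem-from-w-to-f} are mutually inverse isomorphisms of abelian groups between $C^n(G,A)$ and $C^n_\le(S^1,A^1)$ for every $n\ge 0$. Moreover, \cref{w->f-respects-delta} asserts exactly that these isomorphisms intertwine the coboundary operators, i.e. the diagram \cref{diag-C^n_<=(S^1A^1)-C^n(GA)} commutes at every stage.

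With this in hand the argument is immediate: given $w\in C^n(G,A)$ with image $f\in C^n_\le(S^1,A^1)$, commutativity of \cref{diag-C^n_<=(S^1A^1)-C^n(GA)} gives that $\delta^n w$ corresponds to $\delta_2^n f$ and $\delta^{n+1}\delta^n w$ corresponds to $\delta_2^{n+1}\delta_2^n f$. But \cref{C_le-complex} says $\delta_2^{n+1}\circ\delta_2^n=0$, so the latter is the identity element of $C^{n+2}_\le(S^1,A^1)$, whose preimage under the isomorphism of \cref{C^n-isom-C^n_<=} is the identity element of $C^{n+2}(G,A)$. Hence $\delta^{n+1}\delta^n w$ is trivial for every $w$, as required.

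I do not anticipate any genuine obstacle: the heavy lifting has already been done in \cref{lem-delta^n-homo,C^n-isom-C^n_<=,w->f-respects-delta,C_le-complex}, and the corollary is simply their combination. The only point requiring a line of care is that the ``identity element'' in $C^n(G,A)$ (which under \cref{rem-C^n-inv-direct-prod} is the tuple of identity multipliers $\id_{\cD_{(x_1,\dots,x_n)}}$) corresponds under the isomorphism of \cref{C^n-isom-C^n_<=} to the $n$-cochain $f(s_1,\dots,s_n)=\alpha(s_1\dots s_ns_n\m\dots s_1\m)$, which is the identity of $C^n_\le(S^1,A^1)$; this is clear from \cref{eq-f(s_1...s_n)}. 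Thus the one-line proof may simply cite \cref{C^n-isom-C^n_<=,w->f-respects-delta,C_le-complex}.
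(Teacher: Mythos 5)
Your proposal is correct and follows essentially the same route as the paper, whose proof simply cites \cref{w->f-respects-delta,C_le-complex}: transport the complex structure along the isomorphisms of \cref{C^n-isom-C^n_<=} using the commuting diagram \cref{diag-C^n_<=(S^1A^1)-C^n(GA)}, and invoke \cref{C_le-complex} for $\delta_2^{n+1}\circ\delta_2^n=0$. Your extra remarks (explicitly citing \cref{lem-delta^n-homo} for the homomorphism property and checking that identities correspond to identities) just spell out details the paper leaves implicit.
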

    \begin{proof}
    	This is explained by \cref{w->f-respects-delta,C_le-complex}.
    \end{proof}
	
	\begin{defn}\label{H^n(GA)-defn}
		The complex \cref{C^0(GA)->...} naturally defines the groups $Z^n(G,A)=\ker{\delta^n}$, $B^n(G,A)=\im{\delta^{n-1}}$ and $H^n(G,A)=Z^n(G,A)/B^n(G,A)$ of {\it partial $n$-cocycles}, {\it $n$-coboundaries} and {\it $n$-cohomologies} of $G$ with values in $A$, $n\ge 1$ ($H^0(G,A)=Z^0(G,A)=\ker{\delta^0}$).
	\end{defn}
	
	The next fact immediately follows from \cref{C^n-isom-C^n_<=,w->f-respects-delta}.
	\begin{thrm}\label{H^n(GA)-cong-H^n_<=(SA)}
		There are isomorphisms of groups $Z^n(G,A)\cong Z^n_\le(S^1,A^1)$ and $B^n(G,A)\cong B^n_\le(S^1,A^1)$. In particular, $H^n(G,A)$ is isomorphic to $H^n_\le(S^1,A^1)$ for all $n\ge 0$.
	\end{thrm}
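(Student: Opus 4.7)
The plan is to combine the two preceding results into a formal argument about cochain complexes. First I would note that by \cref{C^n-isom-C^n_<=}, for each $n \geq 0$ the assignment $w \mapsto f$ described in \cref{lem-from-w-to-f} yields a group isomorphism $\Phi^n : C^n(G,A) \to C^n_\le(S^1,A^1)$, with inverse given by the assignment of \cref{lem-from-f-to-w}. By \cref{w->f-respects-delta}, these maps intertwine the coboundary operators, i.e., $\Phi^{n+1} \circ \delta^n = \delta_2^n \circ \Phi^n$ for every $n \geq 0$. Hence the family $\{\Phi^n\}_{n \geq 0}$ is an isomorphism of cochain complexes from \cref{C^0(GA)->...} to \cref{eq-complex-C_le}.

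The remaining step is the standard observation that any isomorphism of cochain complexes restricts to isomorphisms on the subgroups of cocycles and of coboundaries, and therefore descends to an isomorphism of cohomology groups. Explicitly, the commutation $\Phi^{n+1} \circ \delta^n = \delta_2^n \circ \Phi^n$ forces $\Phi^n$ to carry $\ker\delta^n$ bijectively onto $\ker\delta_2^n$, giving $Z^n(G,A) \cong Z^n_\le(S^1,A^1)$; the commutation one level down forces $\Phi^n$ to carry $\im\delta^{n-1}$ bijectively onto $\im\delta_2^{n-1}$, giving $B^n(G,A) \cong B^n_\le(S^1,A^1)$. Passing to the quotient yields the final isomorphism $H^n(G,A) \cong H^n_\le(S^1,A^1)$.

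The case $n = 0$ requires only the convention $B^0 = 0$ on both sides (there being no $\delta^{-1}$ or $\delta_2^{-1}$), so the statement reduces to the $n=0$ commuting square, which is already covered. I do not foresee any genuine obstacle: all of the substantive content has been placed in \cref{C^n-isom-C^n_<=,w->f-respects-delta}, and what remains is the purely formal fact that the cocycle, coboundary, and cohomology functors send isomorphisms of cochain complexes to isomorphisms of abelian groups.
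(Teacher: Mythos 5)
Your proposal is correct and takes essentially the same route as the paper, whose proof consists precisely of the remark that the theorem follows immediately from \cref{C^n-isom-C^n_<=,w->f-respects-delta}; you have merely spelled out the standard formal argument (an isomorphism of cochain complexes restricts to cocycles and coboundaries and descends to cohomology) that the authors leave implicit. Your handling of the degree-zero case, with $B^0=0$ on both sides so that $H^0=Z^0$, also matches the paper's convention in \cref{H^n(GA)-defn}.
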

	
	\begin{cor}\label{w-cohom-to-twisting}
		There is a one-to-one correspondence between the elements of $H^2(G,A)$ and the equivalence classes of twistings related to $(A,\0)$.
	\end{cor}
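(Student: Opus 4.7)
The plan is to deduce the desired bijection from the isomorphism $H^2(G,A)\cong H^2_\le(S^1,A^1)$ of \cref{H^n(GA)-cong-H^n_<=(SA)}, essentially transferring \cref{H^2<->twistings} through the cochain-level dictionary $w\leftrightarrow f$ of \cref{lem-from-w-to-f,lem-from-f-to-w}. The bridge that turns Sieben's twistings (on the semigroup side) into twistings in the sense of \cref{subsec-tw-pact} (on the group side) is \cref{w-normalized-iff-f-normalized}, which pairs the normalization $w(1,x)=w(x,1)=\id_{\cD_x}$ with the Sieben's condition \ref{Sieben's-condition} on the corresponding $f$.

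I first show that every cohomology class in $H^2(G,A)$ contains a representative that is a twisting related to $(A,\0)$. Given $w\in Z^2(G,A)$, transfer it to $f\in Z^2_\le(S^1,A^1)$ and apply the normalization procedure of \cref{f-cohom-to-normalized}: set $g(s)=f(s,s\m)\m$, so that $\tilde f=f\cdot\delta_2^1 g$ satisfies $\tilde f(e,e)=\alpha(e)$, hence is a twisting by \cref{f(e_e)-for-f(se_e)-and-f(e_es)}. One needs to check that $g$ itself lies in $C^1_\le(S^1,A^1)$: this is immediate because $s\le t$ in $S$ implies $s\m\le t\m$, hence $f(s,s\m)\le f(t,t\m)$ by order-preservingness of $f$, and therefore $g(s)\le g(t)$. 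Since $\tilde f$ is then an order-preserving twisting, it is a Sieben's twisting by the discussion preceding \cref{C_le-complex}, so by \cref{w-normalized-iff-f-normalized} the $w$-side image $\tilde w$ of $\tilde f$ is a normalized element of $Z^2(G,A)$, i.e.\ a twisting related to $(A,\0)$ cohomologous to $w$.

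Next, I match the coboundary relation on such twistings with their equivalence in the sense of \cref{subsec-tw-pact}. For $\e=\{\e_x\}\in C^1(G,A)$ formula \cref{eq-delta^nw} with $n=1$ reads
\begin{align*}
(\delta^1\e)(x,y)a=\0_x(\0_{x\m}(a)\e_y)\e_{xy}\m\e_x,
\end{align*}
which is precisely the factor by which equivalent twistings $w'$ and $w$ differ in the formula $aw'_{x,y}=\0_x(\0_{x\m}(a)\e_y)\e\m_{xy}\e_x w_{x,y}$ recalled in \cref{subsec-tw-pact}. Hence two twistings $w,w'$ related to $(A,\0)$ are equivalent if and only if $w'=w\cdot\delta^1\e$ for some $\e\in C^1(G,A)$, if and only if they define the same class in $H^2(G,A)$. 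Combined with the previous step, this yields the claimed bijection.

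The heavy lifting has already been carried out in \cref{C^n-isom-C^n_<=,w->f-respects-delta,w-normalized-iff-f-normalized,f-cohom-to-normalized}; the only subtlety, rather than a genuine obstacle, is checking that the normalizing $1$-cochain $g$ built from \cref{f-cohom-to-normalized} is itself order-preserving, so that the whole normalization takes place inside the subcomplex $C^\bullet_\le(S^1,A^1)$ and survives the identification with $C^\bullet(G,A)$.
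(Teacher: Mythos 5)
Your proposal is correct and follows essentially the same route as the paper: transfer a class through the isomorphism $H^2(G,A)\cong H^2_\le(S^1,A^1)$ of \cref{H^n(GA)-cong-H^n_<=(SA)}, normalize via \cref{f-cohom-to-normalized} with the choice $g(s)=f(s,s\m)\m$ (whose order-preservingness you rightly flag as the one point needing a check, exactly as the paper does), and return through \cref{w-normalized-iff-f-normalized} using the Sieben's-condition characterization of order-preserving twistings. Your explicit verification that the twisting-equivalence from \cref{subsec-tw-pact} coincides with the coboundary relation $w'=w\cdot\delta^1\e$ is left implicit in the paper (it is built into the motivation for \cref{defn-delta^nw}), so spelling it out is a harmless, indeed welcome, addition rather than a deviation.
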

    \begin{proof}
    	By \cref{H^n(GA)-cong-H^n_<=(SA)} each class $[w]\in H^2(G,A)$ corresponds to $[f]\in H^2_\le(S^1,A^1)$, and by \cref{f-cohom-to-normalized} there is $g\in C^1(S^1,A^1)$, such that $\tilde f=f\cdot \delta_2^1g$ is a twisting related to $(\alpha,\lambda)$. It is seen by the proof of \cref{f-cohom-to-normalized} that $g(s)$ can be chosen to be $f(s,s\m)\m$, so $g$ preserves the order, as $f$ does. Therefore, $\tilde f\in Z^2_\le(S^1,A^1)$ and $[f]=[\tilde f]$ in $H^2_\le(S^1,A^1)$. Since $\tilde f$ is order-preserving, it satisfies Sieben's condition \ref{Sieben's-condition} as it was observed at the beginning of \cref{sec-H^n_<=}. By \cref{w-normalized-iff-f-normalized} there is a twisting $\tilde w$ related to $(A,\0)$, such that $[\tilde w]$ corresponds to $[\tilde f]$. Thus, $[w]=[\tilde w]$.
    \end{proof}
	
	\begin{cor}\label{H^n_le-cong-H^n-for-F-inverse}
		Let $S$ be a max-generated $F$-inverse monoid (see~\cite[p. 196]{Lawson2002}) and $A$ an $S$-module. Then $H^n_\le(S^1,A^1)\cong H^n(S,A)$.
	\end{cor}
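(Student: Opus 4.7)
The cases $n=0$ and $n=1$ follow immediately from \cref{H^0_le(S^1_A^1)-cong-H^0(S_A)} and \cref{H^1_le(S^1_A^1)-cong-H^1(S^1_A^1)} respectively (the latter uses that $S$ is a monoid). For $n\ge 2$, by \cref{H^n(S_A)=H^n(S^1_A^1)} we have $H^n(S,A)\cong H^n(S^1,A^1)$, so it suffices to show that the inclusion $C_\le^\bullet(S^1,A^1)\hookrightarrow C^\bullet(S^1,A^1)$ induces an isomorphism on cohomology.

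The plan is to route both sides through partial group cohomology via \cref{H^n(GA)-cong-H^n_<=(SA)}. Set $G=\cG S$ and, for each $x\in G$, denote by $m_x$ the maximum of the $\sigma$-class $x$. First I would verify that $\cD_x:=A_{\alpha(m_xm_x\m)}$ together with $\0_x:=\lambda_{m_x}\colon\cD_{x\m}\to\cD_x$ defines a partial $G$-action on $A$, so that $(A,\0)$ becomes a partial $G$-module. The only nontrivial axiom is $\0_x\circ\0_y=\0_{xy}$ on the appropriate domain; this follows from $\lambda_{m_x}\circ\lambda_{m_y}=\lambda_{m_xm_y}$ combined with the $F$-inverse relation $m_xm_y\le m_{xy}$, since then $\lambda_{m_xm_y}=\alpha((m_xm_y)(m_xm_y)\m)\cdot\lambda_{m_{xy}}$, which agrees with $\0_{xy}$ on the relevant component. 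By \cref{H^n(GA)-cong-H^n_<=(SA)}, $H^n(G,A)\cong H^n_\le(\tilde S^1,A^1)$, where $\tilde S=E(A)*_\0 G$.

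Next I would identify $H^n_\le(S^1,A^1)$ with $H^n(G,A)$ directly. An order-preserving $f\in C^n_\le(S^1,A^1)$ is determined by its values on max tuples, since $s_i\le m_{\sigma(s_i)}$ forces
\[
f(s_1,\dots,s_n)=\alpha(s_1\cdots s_ns_n\m\cdots s_1\m)\,f(m_{\sigma(s_1)},\dots,m_{\sigma(s_n)}).
\]
This gives an isomorphism of graded abelian groups $C_\le^\bullet(S^1,A^1)\cong C^\bullet(G,A)$ via $f\mapsto((x_1,\dots,x_n)\mapsto f(m_{x_1},\dots,m_{x_n}))$, and commutativity with the coboundaries follows by a calculation parallel to \cref{w->f-respects-delta} applied to $\tilde S$, with the discrepancy between $m_{x_i}m_{x_{i+1}}$ and $m_{x_ix_{i+1}}$ absorbed by the order-preserving restriction formula above. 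Hence $H^n_\le(S^1,A^1)\cong H^n(G,A)\cong H^n_\le(\tilde S^1,A^1)$.

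The main obstacle, and the substantive content, is to show $H^n(S^1,A^1)\cong H^n(G,A)$, equivalently that the inclusion $\iota\colon C_\le^\bullet(S^1,A^1)\hookrightarrow C^\bullet(S^1,A^1)$ is a quasi-isomorphism. The plan is to define the natural retraction $\pi(f)(s_1,\dots,s_n)=\alpha(s_1\cdots s_ns_n\m\cdots s_1\m)f(m_{\sigma(s_1)},\dots,m_{\sigma(s_n)})$ and produce an explicit chain homotopy from $\iota\circ\pi$ to the identity of $C^\bullet(S^1,A^1)$. While $\pi$ is not itself a chain map---the defect $\pi\delta-\delta\pi$ measures $f(\dots,m_{x_i}m_{x_{i+1}},\dots)$ versus $f(\dots,m_{x_ix_{i+1}},\dots)$---max-generation guarantees that every $s\in S$ admits a decomposition $s=m_{x_1}\cdots m_{x_k}$, which supplies the data for a homotopy $h^n\colon C^n(S^1,A^1)\to C^{n-1}(S^1,A^1)$ built by induction on the length $k$; the base case $k=1$ is trivial, and the inductive step reduces to the two-factor comparison. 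Verifying the homotopy relation $\partial h+h\partial=\iota\pi-\id$ is the hardest step: it will reduce, via the two-factor base case, to the partial-group cocycle identity already encoded in the coboundary formula \cref{eq-delta^nw}, and it is precisely max-generation that ensures every cochain defect can be recovered from such two-factor building blocks.
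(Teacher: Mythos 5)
Your $n=0,1$ reductions are fine, and the first leg of your route for $n\ge 2$ --- realizing $(\alpha,\lambda)$ as induced by a partial $\cG S$-module $(A,\0)$ and invoking \cref{H^n(GA)-cong-H^n_<=(SA)} to obtain $H^n_\le(S^1,A^1)\cong H^n(\cG S,A)$ --- coincides with the paper's proof, which gets both steps at once from~\cite[Proposition 9.1]{DK2} (that proposition also yields that one may assume $S=E(A)*_\0\cG S$, which you in effect re-derive with your $\tilde S$; this identification is needed anyway, since \cref{H^n(GA)-cong-H^n_<=(SA)} is established for that specific $S$). Two local slips: your $\cD_x$ must be the unital ideal $\bigsqcup_{f\le m_xm_x\m}A_{\alpha(f)}$ as in \cref{D-from-Lambda}, not the single group component $A_{\alpha(m_xm_x\m)}$, which is not an ideal of $A$, so $\0_x=\lambda_{m_x}$ would not be an isomorphism between ideals as the definition of a partial action requires; and $f(m_{x_1},\dots,m_{x_n})$ is an element of one group component, not an invertible multiplier of $\cD_{(x_1,\dots,x_n)}$, so your map $C^\bullet_\le(S^1,A^1)\to C^\bullet(G,A)$ has to be defined pointwise on tuples adapted to each $a$, as in \cref{lem-from-f-to-w}, rather than by evaluation at max tuples alone.

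The genuine gap is exactly where you say the substantive content lies: the comparison $H^n(S^1,A^1)\cong H^n(G,A)$ --- equivalently, that the order-preserving subcomplex computes the cohomology of the full complex --- is never proved, and the plan you sketch is internally inconsistent. You concede that the retraction $\pi$ is not a chain map; but if a homotopy $h$ with $\partial h+h\partial=\iota\circ\pi-\id$ existed, then $\iota\circ\pi=\id+\partial h+h\partial$ would automatically commute with the differentials (since $\partial(\partial h+h\partial)=\partial h\partial=(\partial h+h\partial)\partial$), forcing $\iota\circ\pi$ to be a chain map --- contradicting your own computation of the defect $\pi\delta-\delta\pi$. So the induction on the length of factorizations $s=m_{x_1}\cdots m_{x_k}$ is chasing an unsatisfiable identity, and at minimum $\pi$ would have to be modified before any homotopy could be constructed. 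The paper does not attempt this comparison here at all: after observing that each $\cD_x$ is unital with identity $1_x=\alpha(\max x(\max x)\m)$ and that these idempotents generate $E(A)$ --- precisely the point where max-generation enters --- it recognizes $(A,\0)$ as an inverse partial $\cG S$-module in the sense of~\cite[Definition 3.15]{DK} and quotes the previously established isomorphism $H^n(S,A)\cong H^n(\cG S,A)$ from~\cite[Corollaries 3.34 and 4.12]{DK}. That citation supplies exactly what your homotopy was meant to produce; without it, or a corrected and fully executed replacement for it, your argument does not close.
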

    \begin{proof}
    	By~\cite[Proposition 9.1]{DK2}, up to an isomorphism,  each $S$-module $(\alpha,\lambda)$ on $A$ comes from the partial $\cG S$-module $(A,\0)$ defined by \cref{D-from-Lambda,0-from-Lambda},   and we may assume that $S=E(A)*_\0\cG S$. Hence, $H^n_\le(S^1,A^1)\cong H^n(\cG S,A)$ by \cref{H^n(GA)-cong-H^n_<=(SA)}.  Observe from \cref{D-from-Lambda} that each $\cD_x$ is a monoid with identity $1_x=\alpha(\max x(\max x)\m)$, where $\max x$ is the maximum element of the class $x\in\cG S$. The idempotents $1_x$ generate $E(A)$, as the elements $\max x$ generate $S$.  Therefore, $(A,\0)$ is an inverse partial $\cG S$-module in the sense of~\cite[Definition 3.15]{DK}. By~\cite[Corollaries 3.34 and 4.12]{DK} one has $H^n(S,A)\cong H^n(\cG S,A)$ (see also~\cite[Definition 4.1]{DK}).\footnote{Notice that in \cite{DK} the H. Lausch's cohomology group $H^n(S,A)$ is denoted by $H^n_S(A)$.}
    \end{proof}
	
	\section{Extensions of semilattices of abelian groups and \texorpdfstring{$H^2(G,A)$}{H\texttwosuperior(G,A)}}\label{sec-ext-abel}
	
	It was proved in~\cite[Theorem 6.12]{DK2} that any admissible extension $A\overset{i}{\to}U\overset{j}{\to}G$ induces a twisted partial action $\Theta$ of $G$ by $A$, as soon as one fixes a refinement $A\overset{i}{\to}U\overset{\pi}{\to}S\overset{\kappa}{\to}G$ of $U$ together with an order-preserving transversal $\rho$ of $\pi$. We shall show that the change of $U$ by an equivalent one, as well as a change of $\rho$, leads to an equivalent $\Theta$.
	
	\begin{lem}\label{equiv-ext-to-equiv-tw-pact}
		Suppose that $A\overset{i}{\to}U\overset{j}{\to}G$ and $A\overset{i'}{\to}U'\overset{j'}{\to}G$ is a pair of equivalent admissible extensions of $A$ by $G$. Then any two refinements $A\overset{i}{\to}U\overset{\pi}{\to}S\overset{\kappa}{\to}G$ and $A\overset{i'}{\to}U'\overset{\pi'}{\to}S'\overset{\kappa'}{\to}G$ of $U$ and $U'$ with order-preserving transversals $\rho$ and $\rho'$ of $\pi$ and $\pi'$ induce equivalent twisted partial actions $\Theta$ and $\Theta'$ of $G$ on $A$.
	\end{lem}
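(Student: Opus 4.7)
The plan is to factor the equivalence of $\Theta$ and $\Theta'$ through two steps by introducing a third, ``transferred'' order-preserving transversal of $\pi'$.

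First, I would use the equivalence $\mu:U\to U'$ to transport the refinement data. Since $\mu$ is an isomorphism, the induced map $\nu:S\to S'$ in the refined diagram~\cref{refined-diag-ext-A-G} is an isomorphism of $E$-unitary inverse semigroups satisfying $\pi'\circ\mu=\nu\circ\pi$ and $\kappa'\circ\nu=\kappa$. I would set $\rho'':=\mu\circ\rho\circ\nu\m$; this is an order-preserving transversal of $\pi'$ (order preservation because inverse-semigroup homomorphisms preserve $\le$, transversality since $\pi'\circ\rho''=\nu\circ\pi\circ\rho\circ\nu\m=\id_{S'}$). Computing $\alpha''$, $\lambda''$, $f''$ from~\cref{alpha-from-rho,lambda-from-rho,f-from-rho} and using $\mu\circ i=i'$, one sees immediately that the Sieben's twisted $S'$-module $\Lambda''$ determined by $\rho''$ is the transport along $\nu$ of the Sieben's twisted $S$-module $\Lambda$ determined by $\rho$. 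Because $\kappa'\circ\nu=\kappa$, formulas~\cref{D-from-Lambda,0-from-Lambda,w-from-Lambda} then produce the same twisted partial action from $\Lambda$ and from $\Lambda''$, namely $\Theta$.

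Second, I would compare the two order-preserving transversals $\rho'$ and $\rho''$ of the \emph{same} epimorphism $\pi':U'\to S'$. By~\cref{rho|_E(S)=j|_E(U)-inv} they coincide on $E(S')$. For an arbitrary $s\in S'$ the elements $\rho'(s)$ and $\rho''(s)$ lie in the same $\pi'$-fibre, so $\rho''(s)\rho'(s)\m$ projects to $ss\m\in E(S')$ and hence belongs to $i'(A)=\pi'^{-1}(E(S'))$. I would define $g:S'\to A$ by $g(s):=(i')\m(\rho''(s)\rho'(s)\m)$; a short check gives $g(s)\in A_{\alpha'(ss\m)}$. A direct calculation, using~\cref{alpha-from-rho,lambda-from-rho,f-from-rho} for both transversals together with injectivity of $i'$ and the inverse-semigroup calculus in $U'$, then verifies that $g$ witnesses the equivalence of the Sieben's twisted $S'$-module structures $\Lambda'$ (from $\rho'$) and $\Lambda''$ (from $\rho''$) in the sense of~\cref{eq-tw-mod-alpha',eq-tw-mod-lambda',eq-tw-mod-f'}.

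Finally, by~\cite[Theorem 9.3]{DK2} (summarised at the end of~\cref{subsec-Theta-and-Lambda}), equivalent Sieben's twisted $S'$-module structures on $A$ correspond to equivalent twisted partial actions of $\cG{S'}=G$ on $A$; therefore $\Theta'$ (obtained from $\Lambda'$) is equivalent to the twisted partial action obtained from $\Lambda''$, which by the first step coincides with $\Theta$. I expect the main obstacle to be the verification of~\cref{eq-tw-mod-f'} for $g$ in the second paragraph: it mixes $\lambda''_s$ with the cocycle $f'$ and requires careful bookkeeping of the idempotents produced when reducing products of the form $\rho''(s)\rho'(s)\m\cdot\rho''(t)\rho'(t)\m$ in $U'$ against $\rho''(st)\rho'(st)\m$; everything else is essentially unwinding definitions.
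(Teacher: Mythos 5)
Your proposal is correct and follows essentially the same route as the paper's proof: the paper also introduces the transported transversal $\rho''=\mu\circ\rho\circ\nu\m$ of $\pi'$, shows that the twisted partial action induced by $\Lambda''$ coincides with $\Theta$ (using $\kappa'\circ\nu=\kappa$ in \cref{D-from-Lambda,0-from-Lambda,w-from-Lambda}), and then concludes that $\Lambda''$ is equivalent to $\Lambda'$ and hence $\Theta$ to $\Theta'$. The only difference is that where you propose to verify the transport identity $\Lambda''\circ\nu=\Lambda$ and the equivalence of the two twisted $S'$-module structures by direct computation with $g(s)=(i')\m(\rho''(s)\rho'(s)\m)$, the paper simply cites \cite[Corollary 3.16]{DK2}, \cite[Proposition 3.10]{DK2} and \cite[Proposition 6.13]{DK2}, so your computations amount to inlining those results.
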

	\begin{proof}
		Let $\mu:U\to U'$ be an isomorphism defining the equivalence. There is a homomorphism $\nu:S\to S'$ making the diagram \cref{refined-diag-ext-A-G} commute. Since $\mu$ is an isomorphism, $\nu$ is also an isomorphism. 
		
		Denote by $\Lambda=(\alpha,\lambda,f)$ and $\Lambda'=(\alpha',\lambda',f')$ the twisted module structures on $A$ over $S$ and $S'$, which come from $A\overset{i}{\to}U\overset{\pi}{\to}S$, $\rho$ and $A\overset{i'}{\to}U'\overset{\pi'}{\to}S'$, $\rho'$, respectively. Note that $\rho''=\mu\circ\rho\circ\nu\m$ is another order-preserving transversal of $\pi'$ with $\rho''\circ\nu=\mu\circ\rho$. By~\cite[Corollary 3.16]{DK2} the induced twisted $S'$-module structure $\Lambda''=(\alpha'',\lambda'',f'')$ on $A$ satisfies $\Lambda''\circ\nu=\Lambda$ in the sense that $\lambda''\circ\nu=\lambda$, $\alpha''\circ\nu|_{E(S)}=\alpha$ and $f=f''\circ(\nu\times\nu)$. 
		
		Let $\Theta''=(\0'',w'')$ be the twisted partial action of $G$ on $A$ coming from $\Lambda''$. Observe from \cref{D-from-Lambda} that
		$$
		\cD_x=\bigsqcup_{\kappa(s)=x}A_{\alpha(ss\m)}=\bigsqcup_{\kappa'\circ\nu(s)=x}A_{\alpha''\circ\nu(ss\m)}=\bigsqcup_{\kappa'\circ\nu(s)=x}A_{\alpha''(\nu(s)\nu(s)\m)}=\cD''_x.
		$$
		Moreover, $\0_x(a)=\lambda_s(a)=\lambda''_{\nu(s)}(a)$ for $s\in S$ with $x=\kappa(s)=\kappa'\circ\nu(s)$, $aa\m=\alpha(s\m s)=\alpha''(\nu(s)\m\nu(s))$, so $\0_x(a)=\0''_x(a)$ (see \cref{0-from-Lambda}). Similarly using \cref{w-from-Lambda} one proves  that $w''=w$. Thus, $\Theta=\Theta''$. 
		
		On the other hand, in view of \cite[Proposition 3.10]{DK2} one sees that $\Lambda''$ is equivalent to $\Lambda'$ and hence $\Theta''$ is equivalent to $\Theta'$ by \cite[Proposition 6.13]{DK2}. So, $\Theta$ is equivalent to $\Theta'$.
	\end{proof}
	
	In particular, when $A$ is commutative, any two equivalent admissible extensions of $A$ by $G$ induce the same partial $G$-module structure on $A$, and a choice of refinements with order-preserving transversals induces a pair of cohomologous partial $2$-cocycles of $G$ with values in this module.
	
	\begin{defn}\label{ext-of-G-mod-by-G-defn}
		Let $(A,\0)$ be a partial $G$-module.  An {\it extension of $(A,\0)$ by $G$} is an admissible extension $A\overset{i}{\to}U\overset{j}{\to}G$ of $A$ by $G$, such that the induced partial $G$-module is $(A,\0)$.
	\end{defn}
	
	\begin{cor}\label{from-cl-adm-ext-to-H^2}
		Each equivalence class $[U]$ of admissible extensions of a partial $G$-module $(A,\0)$ by $G$ determines an element $[w]$ of $H^2(G,A)$.
	\end{cor}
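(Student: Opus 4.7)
The plan is to use the machinery already assembled: pass from an admissible extension to a twisting via a refinement and an order-preserving transversal, observe that this twisting \emph{is} a partial $2$-cocycle, and then use \cref{equiv-ext-to-equiv-tw-pact} to argue that the cohomology class is independent of all choices.

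Concretely, starting with a representative $A \overset{i}{\to} U \overset{j}{\to} G$ of the class $[U]$, I would choose a refinement $A \overset{i}{\to} U \overset{\pi}{\to} S \overset{\kappa}{\to} G$ together with an order-preserving transversal $\rho$ of $\pi$ (possible since the extension is admissible). This produces a twisted partial action $\Theta = (\0', w)$ of $G$ on $A$ by the procedure recalled in \cref{subsec-Theta-and-Lambda}; since $A$ is commutative, the pair $(\0',w)$ decomposes into a partial action $\0'$ of $G$ on $A$ and a twisting $w$ related to $(A,\0')$ in the sense of \cref{eq-2-coc-id-comm}. By \cref{ext-of-G-mod-by-G-defn} we have $\0' = \0$, so $w$ is a twisting related to the given $(A,\0)$.

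Next I would verify that $w \in Z^2(G,A)$. Indeed, \cref{eq-2-coc-id-comm} rewrites, using the commutativity of $A$ and \cref{lem-M(S)-is-comm}, as $(\delta^2 w)(x,y,z) a = \0_x(a)$ on $a \in \cD_{x^{-1}}\cD_y\cD_{yz}$; combined with the normalization $w_{1,x} = w_{x,1} = \id$, this yields $\delta^2 w = \mathbf{1}$ in $C^3(G,A)$. Hence $w$ defines a class $[w] \in H^2(G,A)$.

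Finally, I would prove independence from the choices. If $\rho_1, \rho_2$ are two order-preserving transversals of the same $\pi$ (and the refinement is fixed), or if one replaces $U$ by an equivalent extension $U'$ and chooses any refinement together with an order-preserving transversal, then \cref{equiv-ext-to-equiv-tw-pact} guarantees that the resulting twisted partial actions are equivalent. Commutativity of $A$ forces the partial actions to coincide (both equal $\0$) and the twistings $w, w'$ to be equivalent in the sense recalled just before \cref{defn-part-G-mod}, with an associated family $\e = \{\e_x\}$. Translating this equivalence through the isomorphism $C^2(G,A) \cong C^2_\le(S^1,A^1)$ of \cref{C^n-isom-C^n_<=}, the family $\e$ produces an element of $C^1_\le(S^1,A^1)$ whose coboundary relates the two twistings; equivalently, directly from the formula $a w'_{x,y} = \0_x(\0_{x^{-1}}(a)\e_y)\e^{-1}_{xy}\e_x w_{x,y}$ together with \cref{eq-delta^nw} for $n=1$, one recognizes $w'= w\cdot \delta^1\e$. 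Thus $[w] = [w'] \in H^2(G,A)$.

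The mildly delicate step is the last translation: one must check that the equivalence of twistings in the sense of \cref{subsec-tw-pact} is literally the relation of differing by a $1$-coboundary in $C^2(G,A)$, i.e., unpack $a w'_{x,y}=\0_x(\0_{x^{-1}}(a)\e_y)\e^{-1}_{xy}\e_x w_{x,y}$ and match it against \cref{eq-delta^nw}; everything else is bookkeeping on the choices already handled by \cref{equiv-ext-to-equiv-tw-pact}.
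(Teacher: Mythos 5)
Your proposal is correct and follows essentially the same route as the paper: the corollary there is just the conjunction of \cref{equiv-ext-to-equiv-tw-pact} (well-definedness under change of representative, refinement and order-preserving transversal) with the discussion preceding \cref{defn-part-G-mod}, where for commutative $A$ equivalence of twisted partial actions forces the partial actions to coincide and equivalence of twistings is precisely the relation $w'=w\cdot\delta^1\e$, i.e.\ being cohomologous in the complex \cref{C^0(GA)->...}. One notational nitpick: \cref{eq-2-coc-id-comm} says $(\delta^2w)(x,y,z)\,\0_x(a)=\0_x(a)$ for $a\in\cD_{x\m}\cD_y\cD_{yz}$ (not $(\delta^2w)(x,y,z)\,a=\0_x(a)$), which yields $\delta^2w=1$ because $\0_x$ maps $\cD_{x\m}\cD_y\cD_{yz}$ onto $\cD_{(x,y,z)}$, and the normalization $w_{1,x}=w_{x,1}=\id_{\cD_x}$ is not actually needed for that step.
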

	
	For the converse map we recall from~\cite[Proposition 5.15]{DK2} that any twisted partial action $\Theta$ of $G$ on $A$ defines the admissible extension $A*_\Theta G$ of $A$ by $G$ whose refinement can be chosen to be $A\overset{i}{\to}A*_\Theta G\overset{\pi}{\to}E(A)*_\0 G\overset{\kappa}{\to}G$ with $\pi(a\delta_x)=aa\m\delta_x$ and $\kappa(a\delta_x)=x$.
	
	\begin{lem}\label{Theta'-from-A*_Theta-G}
		Let $\Theta'$ be the twisted partial action induced by $A\overset{i}{\to}A*_\Theta G\overset{\pi}{\to}E(A)*_\0 G\overset{\kappa}{\to}G$ and the order-preserving transversal $\rho:E(A)*_\0 G\to A*_\Theta G$, $\rho(e\delta_x)=e\delta_x$. Then $\Theta'=\Theta$.
	\end{lem}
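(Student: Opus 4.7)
The plan is to verify in turn the three equalities $\cD'_x=\cD_x$, $\0'_x=\0_x$ and $w'_{x,y}=w_{x,y}$ by unwinding the recipe of \cref{subsec-Theta-and-Lambda} applied to the twisted $S$-module structure $\Lambda=(\alpha,\lambda,f)$ induced from $\rho$ via \cref{alpha-from-rho,lambda-from-rho,f-from-rho}. First I identify the ingredients inside $S=E(A)*_\0 G$. Since $E(S)=\{e\delta_1\mid e\in E(A)\}$, \cref{alpha-from-rho} reduces to $\alpha(e\delta_1)=e$, and for a generic $s=e\delta_x\in S$ a direct computation in the (untwisted) partial-action crossed product yields $ss\m=e\delta_1$ and $s\m s=\0_{x\m}(e)\delta_1$. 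Using the decomposition $\cD_x=\bigsqcup_{e\in\cD_x\cap E(A)}A_e$, \cref{D-from-Lambda} immediately gives $\cD'_x=\cD_x$.

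Next, for $\0'_x$, take $a\in\cD_{x\m}$. The unique $s\in S$ above $x$ with $\alpha(s\m s)=aa\m$ is $s=\0_x(aa\m)\delta_x$. By \cref{0-from-Lambda,lambda-from-rho}, $\0'_x(a)=\lambda_s(a)=i\m(\rho(s)i(a)\rho(s)\m)$. Computing in $A*_\Theta G$, the product $\rho(s)i(a)=\0_x(aa\m)\delta_x\cdot a\delta_1$ simplifies to $\0_x(a)\delta_x$ using $\0_{x\m}(\0_x(aa\m))\cdot a=a$. Writing $\rho(s)\m=(w\m_{x\m,x}aa\m)\delta_{x\m}$ by the inversion formula $(b\delta_z)\m=w\m_{z\m,z}\0_{z\m}(b\m)\delta_{z\m}$, right-multiplication followed by a single application of $\0_x(w\m_{x\m,x}b)=\0_x(b)w\m_{x,x\m}$ for $b\in\cD_{x\m}$ (obtained by specialising axiom \cref{0_x(sw_yz)w_xyz} to $y=x\m$, $z=x$) collapses the triple product to $\0_x(a)\delta_1=i(\0_x(a))$. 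Hence $\0'_x(a)=\0_x(a)$.

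For $w'_{x,y}$, fix $a\in\cD_x\cap\cD_{xy}$. The unique pair demanded by \cref{w-from-Lambda} is $s=aa\m\delta_x$ and $t=aa\m\delta_{xy}$, and inside $S$ one computes $s\m t=\0_{x\m}(aa\m)\delta_y$. Combining \cref{w-from-Lambda,f-from-rho}, $w'_{x,y}a=f(s,s\m t)\cdot a=i\m(\rho(s)\rho(s\m t)\rho(t)\m)\cdot a$. The product $\rho(s)\rho(s\m t)$ evaluates in $A*_\Theta G$ to $(aa\m w_{x,y})\delta_{xy}$; right-multiplying by $\rho(t)\m$, commuting multipliers past idempotents via \cref{lem-M(S)-is-comm}, and applying the same identity $\0_z(w\m_{z\m,z}b)=\0_z(b)w\m_{z,z\m}$ with $z=xy$ telescopes the whole expression down to $(aa\m w_{x,y})\delta_1=i(aa\m w_{x,y})$. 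Consequently $w'_{x,y}a=aa\m w_{x,y}\cdot a=w_{x,y}\cdot a$, so $w'_{x,y}=w_{x,y}$ as multipliers of $\cD_x\cD_{xy}$.

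The main obstacle will be the bookkeeping in these two computations: tracking which ideal each multiplier lives on, deploying the inversion formula in $A*_\Theta G$ without losing any twist, and recognising that every cancellation stems from the single identity $\0_x(w\m_{x\m,x}\cdot)=\0_x(\cdot)w\m_{x,x\m}$. The argument is conceptually direct once this identity is isolated and the semilattice-of-groups nature of $A$ is used to force $aa\m$ to absorb the stray idempotent factors.
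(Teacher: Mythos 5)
Your proof is correct, but it takes a genuinely different route from the paper. The paper's own proof is a short reduction: it invokes \cite[Proposition 9.2]{DK2}, which says that the twisted partial action $\Theta''$ induced through the refinement $A\overset{i}{\to}A*_\Theta G\overset{\pi}{\to}E(A)*_\0 G\overset{\sigma^\natural}{\to}\cG{E(A)*_\0 G}$ with the same $\rho$ satisfies $\Theta=\Theta''\circ\nu$ for the isomorphism $\nu:G\to\cG{E(A)*_\0 G}$, $\nu(x)=\sigma^\natural(e\delta_x)$, and then only checks that $\kappa(s)=x\iff\sigma^\natural(s)=\nu(x)$, so that \cref{D-from-Lambda,0-from-Lambda,w-from-Lambda} yield $\Theta'=\Theta''\circ\nu=\Theta$. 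You instead verify $\cD'_x=\cD_x$, $\0'_x=\0_x$ and $w'_{x,y}=w_{x,y}$ by direct computation in the crossed products, and your details check out: $ss\m=e\delta_1$ and $s\m s=\0_{x\m}(e)\delta_1$ for $s=e\delta_x\in E(A)*_\0 G$; the representatives $s=\0_x(aa\m)\delta_x$ (resp.\ $s=aa\m\delta_x$, $t=aa\m\delta_{xy}$ with $s\m t=\0_{x\m}(aa\m)\delta_y$) are the right ones; and the key cancellation identity $\0_x(sw_{x\m,x})=\0_x(s)w_{x,x\m}$ on $\cD_{x\m}$ is indeed the specialization of axiom \cref{0_x(sw_yz)w_xyz} at $(x,x\m,x)$, with your inverse form following by substituting $sw\m_{x\m,x}$. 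One small point you use silently: the cancellations $\0_{x\m}\circ\0_x=\id_{\cD_{x\m}}$ rest on axiom \cref{0_x-circ-0_y(s)} \emph{plus} commutativity of $A$ (via \cref{lem-M(S)-is-comm}) to trivialize conjugation by $w_{x\m,x}$ --- available here since $A$ is a semilattice of abelian groups, but worth stating. Comparing the two: the paper's argument is shorter and reuses machinery already established in \cite{DK2}, at the price of routing through the maximum group image and an external proposition; yours is self-contained and makes explicit exactly where each of $w_{x,x\m}$, $w_{x\m,x}$, $w_{x,y}$ cancels, at the price of redoing inline computations that \cite{DK2} has already packaged, and it also implicitly confirms the formula $f(s,t)=\alpha(stt\m s\m)w_{\kappa(s),\kappa(t)}$ of \cref{f-from-Theta} for this extension.
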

	\begin{proof}
		By~\cite[Proposition 9.2]{DK2} the twisted partial action $\Theta''$ of $\cG{E(A)*_\0 G}$ on $A$, coming from $A\overset{i}{\to}A*_\Theta G\overset{\pi}{\to}E(A)*_\0 G\overset{\sigma^\natural}{\to}\cG{E(A)*_\0 G}$ and the same $\rho$, satisfies $\Theta=\Theta''\circ\nu$, where $\nu$ is an isomorphism $G\to\cG{E(A)*_\0 G}$ defined by $\nu(x)=\sigma^\natural(e\delta_x)$ for $e\in E(\cD_x)$. Observe that $\nu\circ\kappa(e\delta_x)=\nu(x)=\sigma^\natural(e\delta_x)$, so $\kappa(s)=x\iff\sigma^\natural(s)=\nu(x)$ for any $s\in E(A)*_\Theta G$. Then it is immediately seen from \cref{D-from-Lambda,0-from-Lambda,w-from-Lambda} that $\Theta'=\Theta''\circ\nu$, i.\,e. $\0'_x=\0''_{\nu(x)}$ and $w'_{x,y}=w''_{\nu(x),\nu(y)}$. Thus, $\Theta=\Theta'$.
	\end{proof}
	
	\begin{cor}\label{A*G-ext-G-mod}
		Let $(A,\0)$ be a partial $G$-module and $w$ a twisting related to $(A,\0)$. Then the crossed product $A*_{(\0,w)}G$ is an extension of $(A,\0)$ by $G$.
	\end{cor}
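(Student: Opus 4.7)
The plan is to assemble this from the machinery already in place. Since $w$ is by assumption a twisting related to the partial $G$-module $(A,\0)$, the pair $\Theta=(\0,w)$ is a twisted partial action of $G$ on $A$. First, I would invoke \cite[Proposition 5.15]{DK2}, as restated in the paragraph preceding \cref{Theta'-from-A*_Theta-G}, which guarantees that the crossed product $A*_\Theta G$ is an admissible extension of $A$ by $G$: a refinement is given by $A\overset{i}{\to}A*_\Theta G\overset{\pi}{\to}E(A)*_\0 G\overset{\kappa}{\to}G$, with $\pi(a\delta_x)=aa\m\delta_x$ and $\kappa(a\delta_x)=x$, and the map $\rho(e\delta_x)=e\delta_x$ is an order-preserving transversal of $\pi$. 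This takes care of the ``admissible extension of $A$ by $G$'' part of \cref{ext-of-G-mod-by-G-defn}.

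It then remains to verify that the partial $G$-module on $A$ induced by this admissible extension is precisely $(A,\0)$. Here I would simply appeal to \cref{Theta'-from-A*_Theta-G}, applied to $\Theta=(\0,w)$ together with the refinement and transversal just described: the conclusion is that the induced twisted partial action $\Theta'$ coincides with $\Theta$. Extracting the partial action component (the first entry of the pair), we get $\0'=\0$, so the induced partial $G$-module is indeed $(A,\0)$. By \cref{ext-of-G-mod-by-G-defn}, this identifies $A*_{(\0,w)}G$ as an extension of the partial $G$-module $(A,\0)$ by $G$.

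There is essentially no obstacle here: the corollary is a direct repackaging of \cref{Theta'-from-A*_Theta-G} together with the admissibility statement from \cite{DK2}, once the definition of an extension of a partial $G$-module is unfolded. The only point requiring care is to confirm that the partial $G$-module induced by an admissible extension (in the commutative case) is obtained by taking the partial action component of the twisted partial action constructed via the procedure of \cref{equiv-ext-to-equiv-tw-pact} — an identification we already relied on in \cref{from-cl-adm-ext-to-H^2}. With that convention in place, the statement is immediate.
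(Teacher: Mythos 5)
Your proof is correct and follows exactly the route the paper intends: the corollary is stated without a separate proof precisely because it is the combination of the admissibility of $A*_\Theta G$ with its standard refinement and order-preserving transversal (recalled from \cite[Proposition 5.15]{DK2} just before \cref{Theta'-from-A*_Theta-G}) and the identity $\Theta'=\Theta$ from \cref{Theta'-from-A*_Theta-G}, whose partial-action component gives $\0'=\0$. Your closing remark on the well-posedness of the induced partial $G$-module is also the right care point, and it is covered by the paragraph following \cref{equiv-ext-to-equiv-tw-pact}.
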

	
	\begin{lem}\label{A*_Lambda-S-eq-A*_Lambda'-S}
		Let $\Lambda=(\alpha,\lambda,f)$ and $\Lambda'=(\alpha',\lambda',f')$ be twisted $S$-module structures on $A$. If $\Lambda$ and $\Lambda'$ are equivalent, then $A*_\Lambda S$ and $A*_{\Lambda'}S$ are equivalent as extensions of $A$ by $S$.
	\end{lem}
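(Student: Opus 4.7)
The plan is to define $\mu:A*_\Lambda S\to A*_{\Lambda'}S$ by $\mu(a\delta_s)=ag(s)\m\delta_s$, where $g:S\to A$ with $g(s)\in A_{\alpha(ss\m)}$ is the function realizing the equivalence of $\Lambda$ and $\Lambda'$ in the sense of \cref{eq-tw-mod-alpha',eq-tw-mod-lambda',eq-tw-mod-f'}, and then to check that $\mu$ is a semigroup homomorphism making the equivalence diagram from \cref{subsec-ext-of-A-by-S} commute. Since, as noted in \cref{subsec-ext-of-A-by-S}, any such $\mu$ is automatically an isomorphism, this will suffice. Well-definedness is immediate, as $a,g(s)\m\in A_{\alpha(ss\m)}$ yields $ag(s)\m\in A_{\alpha(ss\m)}$.

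For the homomorphism property I would expand
\[
\mu(a\delta_s)\mu(b\delta_t)=ag(s)\m\lambda'_s(bg(t)\m)f'(s,t)\delta_{st}.
\]
Applying \cref{eq-tw-mod-lambda'} (together with the fact that endomorphisms of inverse semigroups preserve inverses) gives $\lambda'_s(bg(t)\m)=g(s)\lambda_s(b)\lambda_s(g(t))\m g(s)\m$. Substituting this, using $g(s)\m g(s)=\alpha(ss\m)$, and observing that $\alpha(ss\m)$ acts as identity on elements of $A_{\alpha(stt\m s\m)}$ (such as $\lambda_s(b)$ and $\lambda_s(g(t))\m$), the product collapses to $a\lambda_s(b)\lambda_s(g(t))\m g(s)\m f'(s,t)\delta_{st}$. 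On the other hand,
\[
\mu(a\delta_s\cdot b\delta_t)=\mu(a\lambda_s(b)f(s,t)\delta_{st})=a\lambda_s(b)f(s,t)g(st)\m\delta_{st},
\]
so cancelling the common left factor $a\lambda_s(b)$ in the group $A_{\alpha(stt\m s\m)}$ reduces the desired identity to $g(s)\lambda_s(g(t))f(s,t)g(st)\m=f'(s,t)$, which is precisely \cref{eq-tw-mod-f'}.

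It remains to verify that the equivalence diagram commutes. The identity $j'\circ\mu=j$ is obvious from the definition of $\mu$. For $\mu\circ i=i'$, I would first show that $g(e)=\alpha(e)$ for every $e\in E(S)$: specializing \cref{eq-tw-mod-f'} to $s=t=e$, using $f(e,e)=\alpha(e)=f'(e,e)$ (from \cref{f(se_e)-and-f(e_es)}) and $\lambda_e(g(e))=\alpha(e)g(e)=g(e)$ (from \cref{lambda_e(a)}), one obtains $g(e)=g(e)^2$ in the group $A_{\alpha(e)}$, whence $g(e)=\alpha(e)$. Consequently, writing $e=\alpha\m(aa\m)$, we have $\mu(i(a))=\mu(a\delta_e)=ag(e)\m\delta_e=a\alpha(e)\delta_e=a\delta_e=i'(a)$.

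The main obstacle throughout is the careful bookkeeping of the group components $A_e$ in which each factor lies, needed to legitimately apply the absorption rule $\alpha(e)x=x$ whenever $xx\m\le\alpha(e)$; once this is controlled, the manipulations reduce directly to \cref{eq-tw-mod-lambda',eq-tw-mod-f'}.
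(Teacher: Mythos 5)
Your proposal is correct and follows essentially the same route as the paper: the identical map $\mu(a\delta_s)=ag(s)\m\delta'_s$, the same use of \cref{eq-tw-mod-lambda',eq-tw-mod-f'} to verify the homomorphism property (the paper substitutes $f'$ via \cref{eq-tw-mod-f'} and absorbs idempotents, while you cancel the common factor $a\lambda_s(b)$ --- the same computation read in the other direction), and the same commutativity checks. The only cosmetic difference is that for $\mu\circ i=i'$ the paper cites a remark from~\cite{DK2} for $g(e)=\alpha(e)$, $e\in E(S)$, which you instead derive directly from \cref{lambda_e(a),f(se_e)-and-f(e_es),eq-tw-mod-f'}.
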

	\begin{proof}
		If $g:S\to A$ is a map defining the equivalence, then set $\mu:A*_\Lambda S\to A*_{\Lambda'} S$, $\mu(a\delta_s)=ag(s)\m\delta'_s$. Since $\alpha=\alpha'$ and $g(s)\in A_{\alpha(ss\m)}$, $\mu$ is well-defined. Obviously, $j'\circ\mu=j$. Observe using~\cite[Remark 3.11]{DK2} that
		$$
		\mu\circ i(a)=\mu(a\delta_{\alpha\m(aa\m)})=ag(\alpha\m(aa\m))\m\delta'_{\alpha\m(aa\m)}=a\delta'_{\alpha\m(aa\m)}=i'(a).
		$$
		
		It remains to prove that $\mu$ is a homomorphism. We have by \cref{eq-tw-mod-lambda',eq-tw-mod-f'} and the fact that $g(s)\in A_{\alpha(ss\m)}$
		\begin{align*}
		\mu(a\delta_s)\mu(b\delta_t)&=ag(s)\m\delta'_s\cdot bg(t)\m\delta'_t=ag(s)\m\lambda'_s(bg(t)\m)f'(s,t)\delta'_{st}\\
		&=ag(s)\m g(s)\lambda_s(bg(t)\m)g(s)\m g(s)\lambda_s(g(t))f(s,t)g(st)\m\delta'_{st}\\
		&=a\cdot aa\m\cdot\lambda_s(bg(t)\m)\cdot aa\m\cdot\lambda_s(g(t))f(s,t)g(st)\m\delta'_{st}\\
		&=a\lambda_s(bg(t)\m g(t))f(s,t)g(st)\m\delta'_{st}=a\lambda_s(b\cdot bb\m)f(s,t)g(st)\m\delta'_{st}\\
		&=a\lambda_s(b)f(s,t)g(st)\m\delta'_{st}=\mu(a\delta_s\cdot b\delta_t).
		\end{align*}
	\end{proof}
	
	\begin{lem}\label{eq-tw-to-eq-ext}
		Let $\Theta$ and $\Theta'$ be twisted partial actions of $G$ on $A$. If $\Theta$ is equivalent to $\Theta'$, then $A*_\Theta G$ is equivalent to $A*_{\Theta'}G$.
	\end{lem}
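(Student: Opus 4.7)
I would mirror the construction used in \cref{A*_Lambda-S-eq-A*_Lambda'-S} for twisted $S$-modules. Let $\{\e_x\in\cU{\cM{\cD_x}}\mid x\in G\}$ be the family witnessing the equivalence of $\Theta=(\0,w)$ and $\Theta'=(\0',w')$, so that $\cD_x=\cD'_x$ and conditions (ii) and (iii) of \cref{subsec-tw-pact} hold. Define
\[
\mu:A*_\Theta G\to A*_{\Theta'}G,\qquad \mu(a\delta_x)=(a\e\m_x)\delta'_x.
\]
This is well-defined because $a\e\m_x\in\cD_x=\cD'_x$ whenever $a\in\cD_x$. The symmetric formula built from $\{\e\m_x\}$ (with the roles of $\Theta$ and $\Theta'$ swapped) will be a two-sided inverse of $\mu$, so once $\mu$ is shown to be a homomorphism it is automatically an isomorphism.

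\textbf{Commuting diagram.} The equality $j'\circ\mu=j$ is immediate from the definition. For $\mu\circ i=i'$ note that the image of $A$ inside $A*_\Theta G$ consists of elements of the form $a\delta_1$, and applying condition~(iii) with $x=y=1$ (together with $\0_1=\0'_1=\id_A$ and $w_{1,1}=w'_{1,1}=\id_A$) yields $s\e_1=\e_1s\e_1$ for every $s\in A$, forcing $\e_1=\id_A$; hence $\mu(a\delta_1)=a\delta'_1=i'(a)$.

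\textbf{Homomorphism property: the main obstacle.} For $a\in\cD_x$, $b\in\cD_y$ the identity $\mu(a\delta_x)\mu(b\delta_y)=\mu((a\delta_x)(b\delta_y))$ unfolds, via the multiplication rule in each crossed product, into
\[
\0'_x\bigl(\0'_{x\m}(a\e\m_x)\cdot b\e\m_y\bigr)w'_{x,y}=\0_x\bigl(\0_{x\m}(a)b\bigr)w_{x,y}\e\m_{xy}
\]
in $\cD_x\cD_{xy}$. The strategy is: first use (ii) on the outer $\0'_x$ (and on $\0'_{x\m}$ inside) to pull all the $\e$'s outside of $\0'$, using \cref{lem-M(S)-is-comm} freely to commute multipliers past elements and past each other; then invoke (iii) to swap $\0'_x(\cdot)w'_{x,y}$ for $\e_x\0_x(\cdot\,\e_y)w_{x,y}\e\m_{xy}$, after which the $\e_x$, $\e_y$ factors cancel the $\e\m_x$, $\e\m_y$ coming from the definition of $\mu$. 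The combinatorial bookkeeping of domains and of the many multipliers is the only genuine difficulty; no ingredient beyond (ii), (iii), and the commutativity guaranteed by \cref{lem-M(S)-is-comm} is required.

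\textbf{Alternative.} If one prefers to avoid the direct calculation, one can reduce to \cref{A*_Lambda-S-eq-A*_Lambda'-S} as follows. Because multipliers commute with idempotents (\cref{lem-M(S)-is-comm}), condition (ii) forces $\0$ and $\0'$ to agree on $E(A)$, so that $S:=E(A)*_\0 G=E(A)*_{\0'}G$ is a common $E$-unitary inverse semigroup on which, by \cref{subsec-Theta-and-Lambda}, $\Theta$ and $\Theta'$ induce Sieben's twisted $S$-module structures $\Lambda$ and $\Lambda'$. By~\cite[Theorem 9.3]{DK2}, $\Lambda$ and $\Lambda'$ are equivalent; by \cref{A*_Lambda-S-eq-A*_Lambda'-S}, $A*_\Lambda S$ and $A*_{\Lambda'}S$ are equivalent extensions of $A$ by $S$, hence also by $G$ via $\kappa$; and by the refinement description in~\cite[Proposition 5.15]{DK2} one has natural isomorphisms $A*_\Theta G\cong A*_\Lambda S$ and $A*_{\Theta'}G\cong A*_{\Lambda'}S$ of extensions of $A$ by $G$, which transport the equivalence back.
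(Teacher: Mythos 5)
Your proposal is correct, and it actually contains two proofs. The ``Alternative'' paragraph is, up to the choice of citations from \cite{DK2}, exactly the paper's own proof: the paper gets $E(A)*_\0 G=E(A)*_{\0'}G=:S$ from \cite[Remark 5.10]{DK2}, the equivalence of the induced twisted $S$-module structures from \cite[Proposition 8.2]{DK2} (where you invoke \cite[Theorem 9.3]{DK2} as quoted in \cref{subsec-Theta-and-Lambda} --- same content), and the identifications of $A*_\Theta G$ with $A*_\Lambda S$ and of $A*_{\Theta'}G$ with $A*_{\Lambda'}S$ from \cite[Remark 3.18]{DK2} (where you use the refinement of \cite[Proposition 5.15]{DK2}); the remaining steps --- \cref{A*_Lambda-S-eq-A*_Lambda'-S} plus transitivity --- are identical. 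Your primary, direct route is the genuinely different one, and it does go through; moreover, the ``combinatorial bookkeeping'' you defer is lighter than you suggest, for a reason the paper records just before \cref{defn-part-G-mod}: since $A$ is commutative, \cref{lem-M(S)-is-comm} makes condition (ii) collapse to $\0'=\0$ outright (not merely agreement on $E(A)$), and likewise $\0_x\circ\0_{x\m}=\id_{\cD_x}$, so $\0\m_x=\0_{x\m}$ --- which is what tacitly licenses your writing $\0'_{x\m}$ for the inverse occurring in the multiplication rule. With these reductions your displayed identity follows in two moves: applying (iii) in its commutative form $cw'_{x,y}=\0_x(\0_{x\m}(c)\e_y)\e\m_{xy}\e_xw_{x,y}$ to $c=\0_x(\0_{x\m}(a\e\m_x)\,b\,\e\m_y)\in\cD_x\cD_{xy}$ gives, after cancelling $\e\m_y\e_y$ and commuting multipliers,
\begin{align*}
\0_x(\0_{x\m}(a\e\m_x)\,b\,\e\m_y)\,w'_{x,y}=\0_x(\0_{x\m}(a\e\m_x)\,b)\,\e_x\,w_{x,y}\,\e\m_{xy},
\end{align*}
and transporting $\e\m_x$ through $\0_{x\m}$ via the multiplier $\bar\e{}\m_x$ of \cref{lem-w-inside-0} yields $\0_x(\0_{x\m}(a\e\m_x)\,b)=\0_x(\0_{x\m}(a)\,b)\,\e\m_x$, so the $\e_x$ cancels and both sides agree. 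Your derivation of $\e_1=\id_A$ (hence $\mu\circ i=i'$) is also correct. In sum, the direct route buys a self-contained verification inside the crossed-product formalism --- the paper never constructs $\mu$ on $A*_\Theta G$ explicitly --- while the paper's route (your alternative) buys brevity by outsourcing all multiplier manipulation to \cref{A*_Lambda-S-eq-A*_Lambda'-S} and \cite{DK2}.
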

	\begin{proof}
		Observe that $E(A)*_\0 G=E(A)*_{\0'}G=:S$ by~\cite[Remark 5.10]{DK2}. Moreover, the extensions $A\overset{i}{\to}A*_\Theta G\overset{\pi}{\to}S$ and $A\overset{i'}{\to}A*_{\Theta'} G\overset{\pi'}{\to}S$ together with the transversals $\rho:S\to A*_\Theta G$, $\rho(e\delta_x)=e\delta_x$, and $\rho':S\to A*_{\Theta'} G$, $\rho'(e\delta'_x)=e\delta'_x$, induce equivalent twisted $S$-module structures $\Lambda$ and $\Lambda'$ on $A$ thanks to~\cite[Proposition 8.2]{DK2}. Hence $A*_\Lambda S$ is equivalent to $A*_{\Lambda'}S$ by \cref{A*_Lambda-S-eq-A*_Lambda'-S}. But $A*_\Lambda S$ is equivalent to $A*_\Theta G$ and $A*_{\Lambda'}S$ is equivalent to $A*_{\Theta'}G$ by~\cite[Remark 3.18]{DK2}. Thus, by transitivity $A*_\Theta G$ is equivalent to $A*_{\Theta'}G$.
	\end{proof}
	
	\begin{cor}\label{from-H^2-to-cl-adm-ext}
		There is a well-defined map from $H^2(G,A)$ to equivalence classes of extensions of $(A,\0)$ by $G$ which sends a class $[w]$ to the class $[A*_{(\0,w)}G]$, where $w$ is a twisting related to $(A,\0)$ (see \cref{w-cohom-to-twisting}).
	\end{cor}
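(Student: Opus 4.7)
The plan is to assemble the map from three results already established. Starting with a class $[w] \in H^2(G,A)$, the first step is to invoke \cref{w-cohom-to-twisting} to choose a twisting $w$ related to $(A,\0)$ whose equivalence class corresponds to $[w]$; the correspondence guarantees that such a representative exists and is unique up to equivalence of twistings. The second step applies \cref{A*G-ext-G-mod} to obtain the crossed product $A*_{(\0,w)} G$, which is an extension of the partial $G$-module $(A,\0)$ by $G$, and thus produces the candidate equivalence class $[A*_{(\0,w)} G]$.

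The substantive content is well-definedness of this assignment: I would verify that if $w$ and $w'$ are two twistings related to $(A,\0)$ both representing the same class in $H^2(G,A)$, then the resulting extensions are equivalent. By the bijection of \cref{w-cohom-to-twisting}, cohomologous twistings are exactly equivalent twistings in the sense of \cref{subsec-tw-pact}, so the twisted partial actions $(\0, w)$ and $(\0, w')$ are equivalent. Then \cref{eq-tw-to-eq-ext} immediately gives that $A*_{(\0,w)} G$ and $A*_{(\0,w')} G$ are equivalent as extensions of $A$ by $G$, finishing the argument.

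No serious obstacle is expected, as the statement is essentially the composite of \cref{w-cohom-to-twisting,A*G-ext-G-mod,eq-tw-to-eq-ext}. The only point worth being careful about is the precise matching of equivalence relations: one must confirm that the notion of equivalent twistings related to $(A,\0)$ coincides with cohomology in $H^2(G,A)$ (which \cref{w-cohom-to-twisting} supplies) and that this is exactly the input consumed by \cref{eq-tw-to-eq-ext}. Both compatibilities are built into the bijection in \cref{w-cohom-to-twisting} together with the translation between partial actions and Sieben's twisted modules from \cref{subsec-Theta-and-Lambda}.
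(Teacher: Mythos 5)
Your proposal is correct and matches the paper's intended argument exactly: the corollary is stated without proof precisely because it is the composite of \cref{w-cohom-to-twisting} (choice of a twisting representative, with cohomologous twistings being equivalent), \cref{A*G-ext-G-mod} (the crossed product is an extension of $(A,\0)$ by $G$), and \cref{eq-tw-to-eq-ext} (equivalent twisted partial actions yield equivalent extensions). Your attention to matching the equivalence relations is the right point to check, and it is indeed supplied by the bijection in \cref{w-cohom-to-twisting}.
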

	
	\begin{thrm}\label{cl-eq-ext<->H^2}
		Let $(A,\0)$ be a partial $G$-module. Then the equivalence classes of extensions of $(A,\0)$ by $G$ are in a one-to-one correspondence with the elements of $H^2(G,A)$.
	\end{thrm}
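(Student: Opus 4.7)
The plan is to verify that the two constructions defined in \cref{from-cl-adm-ext-to-H^2,from-H^2-to-cl-adm-ext} are mutually inverse. Write $\Phi$ for the map $[U] \mapsto [w]$ from admissible extensions to $H^2(G,A)$, and $\Psi$ for the map $[w] \mapsto [A*_{(\0,w)}G]$ in the opposite direction. Well-definedness of $\Phi$ is already handled by \cref{equiv-ext-to-equiv-tw-pact} together with the isomorphism $Z^2(G,A)\cong Z^2_\le(S^1,A^1)$ from \cref{H^n(GA)-cong-H^n_<=(SA)}; well-definedness of $\Psi$ is handled by \cref{eq-tw-to-eq-ext,w-cohom-to-twisting}, which together guarantee that we may choose an actual twisting representative within each cohomology class and that equivalent twistings yield equivalent crossed products.

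For the composition $\Phi\circ\Psi=\id$, I would start with $[w]\in H^2(G,A)$ and, by \cref{w-cohom-to-twisting}, pick $w$ to be an honest twisting related to $(A,\0)$. The crossed product $A*_{(\0,w)}G$ is an extension of $(A,\0)$ by $G$ by \cref{A*G-ext-G-mod}, with canonical refinement $A\to A*_{(\0,w)}G\to E(A)*_\0 G\to G$ and the obvious order-preserving transversal $\rho(e\delta_x)=e\delta_x$. By \cref{Theta'-from-A*_Theta-G} the induced twisted partial action is precisely $(\0,w)$, so the partial $2$-cocycle recovered by $\Phi$ is $w$ itself, giving $\Phi(\Psi([w]))=[w]$.

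For $\Psi\circ\Phi=\id$, I would take an admissible extension $U$ of $(A,\0)$ by $G$, fix a refinement $A\to U\overset{\pi}{\to}S\overset{\kappa}{\to}G$ with an order-preserving transversal $\rho$ of $\pi$, and form $\Lambda=(\alpha,\lambda,f)$ by \cref{alpha-from-rho,lambda-from-rho,f-from-rho}, then pass to $\Theta=(\0,w)$ via \cref{lambda-from-Theta,f-from-Theta}. The goal is to show $U$ is equivalent to $A*_{(\0,w)}G$ as extensions of $A$ by $G$. From \cite[Theorem 9.1]{Lausch} one has an equivalence $U\sim A*_\Lambda S$ of extensions of $A$ by $S$ via $a\delta_s\mapsto i(a)\rho(s)$; composing with $\kappa$ yields an equivalence of $G$-extensions. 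It remains to identify $A*_\Lambda S$ with $A*_{(\0,w)}G$ as $A$-by-$G$ extensions, which is exactly the content of \cite[Theorem 6.12]{DK2}. Putting these together gives $\Psi(\Phi([U]))=[U]$.

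The main obstacle is bookkeeping across several layers of equivalence: the refinement and transversal are chosen but non-canonical, the induced twisted partial action is well-defined only up to equivalence of twistings, and the cohomology class depends only on the equivalence class of the twisting via \cref{w-cohom-to-twisting}. The technical heavy lifting — that each choice descends consistently — has already been performed in \cref{equiv-ext-to-equiv-tw-pact,Theta'-from-A*_Theta-G,eq-tw-to-eq-ext,w-cohom-to-twisting}; the proof is essentially a synthesis that threads these results together, with the correspondence between admissible extensions and crossed products by twisted partial actions from \cite{DK2} supplying the final identification.
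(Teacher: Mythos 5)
Your proof is correct and takes essentially the same route as the paper: the paper establishes injectivity of $[w]\mapsto[A*_{(\0,w)}G]$ via the composition $[w]\mapsto[A*_{(\0,w)}G]\mapsto[w]$ (\cref{Theta'-from-A*_Theta-G}) and surjectivity via \cite[Theorem 6.12]{DK2}, which is precisely your pair of identities $\Phi\circ\Psi=\id$ and $\Psi\circ\Phi=\id$, the latter merely unfolded through the intermediate equivalence $U\sim A*_\Lambda S$. One cosmetic slip: the passage from $\Lambda$ to $\Theta=(\0,w)$ is given by \cref{D-from-Lambda,0-from-Lambda,w-from-Lambda}, not by \cref{lambda-from-Theta,f-from-Theta}, which go in the opposite direction.
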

	\begin{proof}
		If $w$ is a twisting related to $(A,\0)$, then $[w]\mapsto[A*_{(\0,w)}G]\mapsto[w]$ by \cref{Theta'-from-A*_Theta-G}, so $[w]\mapsto[A*_{(\0,w)}G]$ is injective. It is also surjective by~\cite[Theorem 6.12]{DK2}.
	\end{proof}
	
	\section{Split extensions and \texorpdfstring{$H^1(G,A)$}{H\textonesuperior(G,A)}}\label{sec-split}
	
	The classical $H^1(G,A)$ characterizes  (up to $A$-conjugacy) the splittings of the extension $A\rtimes G$ of a $G$-module $A$ by a group $G$  (see~\cite[Proposition IV.2.3]{Brown}). We first introduce a similar notion for an extension of a semilattice of groups by an inverse semigroup.
	
	\subsection{Split extensions of \texorpdfstring{$A$}{A} by \texorpdfstring{$S$}{S}}\label{sec-split-A-by-S}
	
	\begin{defn}\label{split-ext-S-mod-defn}
		An extension $A\overset{i}{\to}U\overset{j}{\to}S$ of $A$ by $S$ is said to {\it split} if there is a transversal $k:S\to U$ of $j$ which is a homomorphism (called a {\it splitting} of $U$).
	\end{defn}
	
	\begin{rem}\label{U'-eq-U-splits}
		If $U$ splits, then any equivalent extension splits.
	\end{rem}
	\noindent For if $\mu:U\to U'$ is an isomorphism determining equivalence and $k:S\to U$ is a splitting of $U$, then $\mu\circ k$ is a splitting of $U'$.
	
	\begin{lem}\label{A*_Lambda-S-splits}
		Let $A$ be a semilattice of abelian groups, $(\alpha,\lambda)$ an $S$-module structure on $A$, $f\in Z^2(S^1,A^1)$ a twisting related to $(\alpha,\lambda)$ and $\Lambda=(\alpha,\lambda,f)$. Then the extension $A*_\Lambda S$ splits if and only if $f\in B^2(S^1,A^1)$.
	\end{lem}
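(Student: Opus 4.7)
The strategy is to reduce the question to an equation in $C^1(S^1,A^1)$. Any transversal $k\colon S\to A*_\Lambda S$ of $j$ must have the form $k(s)=g(s)\delta_s$ with $g\colon S\to A$, and the requirement $k(s)\in A*_\Lambda S$ forces $g(s)g(s)\m=\alpha(ss\m)$, i.e.\ $g\in C^1(S^1,A^1)$. Conversely, every such $g$ produces a transversal $k_g(s):=g(s)\delta_s$, and if $k_g$ is a homomorphism then $k_g(e)=k_g(e)^2\in E(A*_\Lambda S)$ for every $e\in E(S)$, so the idempotent-preservation clause in the definition of a transversal is automatic. Therefore the extension splits if and only if there exists $g\in C^1(S^1,A^1)$ for which $k_g$ is multiplicative.

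I would then unfold the multiplicativity condition. By the product rule $a\delta_s\cdot b\delta_t=a\lambda_s(b)f(s,t)\delta_{st}$, the identity $k_g(s)k_g(t)=k_g(st)$ reads
\begin{align*}
g(s)\lambda_s(g(t))f(s,t)=g(st)
\end{align*}
inside the abelian group $A_{\alpha(stt\m s\m)}$. Using commutativity of $A$, this rearranges to
\begin{align*}
f(s,t)=\lambda_s(h(t))h(st)\m h(s)=(\delta_2^1 h)(s,t),
\end{align*}
where $h(s):=g(s)\m\in C^1(S^1,A^1)$. Hence $k_g$ is a homomorphism precisely when $f=\delta_2^1 h$, giving both directions of the equivalence simultaneously.

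The main item to be careful about is administrative rather than conceptual: one must verify that every rearrangement above really takes place inside the single abelian group $A_{\alpha(stt\m s\m)}$. For this I would note that $\lambda_s(g(t))\in A_{\alpha(stt\m s\m)}$ by property~\cref{lambda_s(alpha(e))} of a twisted module, that $f(s,t)$ and $g(st)$ lie in the same component by the definitions of a twisting and of $C^1(S^1,A^1)$, and that the stray identity $\alpha(ss\m)$ arising from $g(s)g(s)\m$ is harmlessly absorbed because $\alpha(ss\m)\ge\alpha(stt\m s\m)$.
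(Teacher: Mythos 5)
Your proposal is correct and follows essentially the same route as the paper's proof: identify sections of $j$ with cochains $g\in C^1(S^1,A^1)$ via $k(s)=g(s)\delta_s$, unfold multiplicativity into $g(s)\lambda_s(g(t))f(s,t)=g(st)$, and recognize this as the coboundary condition. In fact you are slightly more careful than the paper on two cosmetic points it glosses over -- substituting $h=g\m$ so that the condition reads literally $f=\delta_2^1h$ (the paper writes $f=\delta_2^1g$, which is off by an inversion), and observing that the clause $k(E(S))\subseteq E(A*_\Lambda S)$ in the definition of a transversal is automatic for a multiplicative section.
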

	\begin{proof}
		Observe that any transversal $\rho:S\to A*_\Lambda S$ of $j:A*_\Lambda S\to S$ has the form $\rho(s)=g(s)\delta_s$, where $g(s)\in A_{\alpha(ss\m)}$. Hence, the transversals $\rho$ can be identified with the elements $g$ of $C^1(S^1,A^1)$. Now $\rho$ is a homomorphism if and only if 
		$$
		g(s)\delta_s\cdot g(t)\delta_t=g(st)\delta_{st}\iff g(s)\lambda_s(g(t))f(s,t)=g(st)\iff f=\delta_2^1g.
		$$
	\end{proof}
	
	\begin{lem}\label{U-splits<=>U-eq-A*S}
		An extension $U$ of an $S$-module $A$ by $S$ splits if and only if it is equivalent to $A*_{(\alpha,\lambda)}S$.
	\end{lem}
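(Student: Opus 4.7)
The plan is to prove the two directions by using the splitting (or its absence) to determine the induced twisting from the transversal machinery of \cref{subsec-ext-of-A-by-S}.

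For the $(\Rightarrow)$ direction, suppose $U$ splits via a homomorphism $k:S\to U$. Since $k$ is itself a transversal of $j$, I would plug $\rho=k$ into formulas \cref{alpha-from-rho,lambda-from-rho,f-from-rho} to obtain a twisted $S$-module structure $\Lambda'=(\alpha',\lambda',f')$ on $A$. First, $\alpha'=\alpha$ automatically by \cref{rho|_E(S)=j|_E(U)-inv}. Next, because $k$ is a homomorphism, $k(s)k(t)=k(st)$, hence $k(s)k(t)k(st)\m=k(st)k(st)\m$, and $i\m$ of this idempotent is precisely $\alpha(stt\m s\m)$; thus $f'$ is trivial. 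Finally, since $A$ is a semilattice of abelian groups, the equivalence relation \cref{eq-tw-mod-lambda'} becomes $\lambda_s'(a)=g(s)\lambda_s(a)g(s)\m=\lambda_s(a)\cdot\alpha(ss\m)=\lambda_s(a)$ (as $\lambda_s(a)\in A_{\alpha(ss\m)}$), so any two transversals produce the same $\lambda$; in particular $\lambda'=\lambda$. By the statement following \cref{f-from-rho} (Lausch's Theorem 9.1), $U$ is equivalent to $A\ast_{\Lambda'}S=A\ast_{(\alpha,\lambda)}S$.

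For the $(\Leftarrow)$ direction, by \cref{U'-eq-U-splits} it suffices to exhibit a splitting of $A\ast_{(\alpha,\lambda)}S$ itself. I would define $k:S\to A\ast_{(\alpha,\lambda)}S$ by $k(s)=\alpha(ss\m)\delta_s$, which is well-formed since $\alpha(ss\m)\cdot\alpha(ss\m)\m=\alpha(ss\m)$ matches the admissibility condition defining the crossed product. A direct multiplication, using the triviality of $f$ together with \cref{lambda_s(alpha(e))} (which gives $\lambda_s(\alpha(tt\m))=\alpha(stt\m s\m)$), yields
\begin{align*}
k(s)k(t)&=\alpha(ss\m)\lambda_s(\alpha(tt\m))\alpha(stt\m s\m)\delta_{st}\\
&=\alpha(ss\m)\alpha(stt\m s\m)\delta_{st}=\alpha(stt\m s\m)\delta_{st}=k(st),
\end{align*}
where the last line uses $stt\m s\m\le ss\m$. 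A similar check shows $k(E(S))\subseteq E(A\ast_{(\alpha,\lambda)}S)$, so $k$ is indeed a splitting transversal. Alternatively, one could invoke \cref{A*_Lambda-S-splits} directly: the trivial twisting is the identity of $C^2(S^1,A^1)$, which is certainly a coboundary (it equals $\delta_2^1$ applied to the $1$-cochain $s\mapsto\alpha(ss\m)$).

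The main subtlety I anticipate lies in the $(\Rightarrow)$ direction, specifically in matching the $\lambda$ induced by $k$ with the given $\lambda$ of the $S$-module structure on $A$; this is where commutativity of $A$ is essential and should be highlighted, since otherwise different transversals a priori yield only conjugate (not equal) actions via \cref{eq-tw-mod-lambda'}. The rest of the argument is a straightforward unwinding of definitions from \cref{subsec-ext-of-A-by-S,subsec-tw-mod}.
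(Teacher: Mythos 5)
Your proof is correct, but it is organized differently from the paper's. The paper disposes of this lemma in three lines as a corollary of the surrounding machinery: it fixes an \emph{arbitrary} transversal to identify $U$ with $A*_{(\alpha,\lambda,f)}S$ via Lausch's Theorem 9.1, and then quotes \cref{A*_Lambda-S-splits} ($U$ splits iff $f\in B^2(S^1,A^1)$) together with \cref{A*_Lambda-S-eq-A*_Lambda'-S} (equivalent twistings yield equivalent extensions). You instead let the splitting itself carry the weight: in the forward direction you choose $\rho=k$ as the transversal, so that \cref{f-from-rho} produces the trivial twisting on the nose and Lausch's Theorem 9.1 immediately gives the equivalence $U\sim A*_{(\alpha,\lambda)}S$, with no cochain manipulation and no appeal to \cref{A*_Lambda-S-splits}; in the backward direction you verify directly that $s\mapsto\alpha(ss\m)\delta_s$ is a homomorphic transversal of $A*_{(\alpha,\lambda)}S$ (a fact the paper only gets implicitly, the trivial twisting being a coboundary) and conclude by \cref{U'-eq-U-splits}. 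Your computations are all sound, including the identification $f'(s,t)=i\m(k(st)k(st)\m)=\alpha(stt\m s\m)$ and the check $k(s)k(t)=\alpha(ss\m)\alpha(stt\m s\m)\delta_{st}=k(st)$. What each route buys: yours is more self-contained and shows concretely \emph{why} a homomorphic transversal kills the twisting; the paper's is shorter because \cref{A*_Lambda-S-splits,A*_Lambda-S-eq-A*_Lambda'-S} are needed anyway for \cref{splittings<->Z^1(S^1_A^1)} and the later results, so the lemma falls out for free. One thing you should make explicit: in arguing $\lambda'=\lambda$ you use that any two transversals of the same extension induce twisted module structures related by some $g$ as in \cref{eq-tw-mod-alpha',eq-tw-mod-lambda',eq-tw-mod-f'}; this is~\cite[Proposition 3.10]{DK2}, and the paper's opening sentence (``we may assume $U$ to be $A*_{(\alpha,\lambda,f)}S$ for some twisting $f$ related to $(\alpha,\lambda)$'') rests tacitly on the same fact.

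A minor imprecision: your parenthetical ``$\lambda_s(a)\in A_{\alpha(ss\m)}$'' is not quite right. If $aa\m=\alpha(e)$, then $\lambda_s(a)\in A_{\alpha(ses\m)}$, and $ses\m\le ss\m$; so $\alpha(ss\m)$ merely acts as the identity on $\lambda_s(a)$. The intended conclusion $g(s)\lambda_s(a)g(s)\m=\lambda_s(a)g(s)g(s)\m=\lambda_s(a)\alpha(ss\m)=\lambda_s(a)$ is unaffected.
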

	\begin{proof}
		Choosing a transversal $\rho$ of $U$, we may assume $U$ to be $A*_{(\alpha,\lambda,f)} S$ for some twisting $f$ related to $(\alpha,\lambda)$. It follows from \cref{A*_Lambda-S-splits} that $U$ splits if and only if $f\in B^2(S^1,A^1)$, that is $f$ is equivalent to the trivial twisting. In view of \cref{A*_Lambda-S-eq-A*_Lambda'-S} the latter exactly means that $U$ is equivalent to $A*_{(\alpha,\lambda)} S$.
	\end{proof}
	
	\begin{lem}\label{splittings<->Z^1(S^1_A^1)}
		Let $U$ be a split extension of an $S$-module $A$ by $S$. Then the splittings of $U$ are in a one-to-one correspondence with the elements of $Z^1(S^1,A^1)$.
	\end{lem}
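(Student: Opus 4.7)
The natural plan is to reduce the problem to the canonical split extension $A*_{(\alpha,\lambda)}S$. First I would invoke \cref{U-splits<=>U-eq-A*S} to fix an equivalence isomorphism $\mu:U\to A*_{(\alpha,\lambda)}S$. Composition with $\mu$ on the left gives a bijection $k\mapsto\mu\circ k$ between splittings of $U$ and splittings of $A*_{(\alpha,\lambda)}S$ (whose inverse sends $k'$ to $\mu\m\circ k'$), so it suffices to parametrise the splittings of this particular extension.

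For the canonical extension I would argue as in the proof of \cref{A*_Lambda-S-splits}, specialised to trivial twisting. Any set-theoretic section $k$ of $j:A*_{(\alpha,\lambda)}S\to S$ must have the form $k(s)=g(s)\delta_s$ with $g(s)\in A_{\alpha(ss\m)}$, which identifies sections of $j$ with elements $g\in C^1(S^1,A^1)$. Since the crossed-product multiplication (with trivial $f$) yields $k(s)k(t)=g(s)\lambda_s(g(t))\delta_{st}$, such a $k$ is a homomorphism if and only if $g(s)\lambda_s(g(t))=g(st)$ for all $s,t\in S$. Both sides lie in the abelian group component $A_{\alpha(stt\m s\m)}$, so this identity rearranges to $\lambda_s(g(t))g(st)\m g(s)=\alpha(stt\m s\m)$, which by the description in \cref{H^1(S^1_A^1)} is precisely the condition $g\in Z^1(S^1,A^1)$.

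It remains to observe that a homomorphic section is automatically a transversal: for every $e\in E(S)$ the element $k(e)=k(e)^2$ is idempotent in $U$, so $k(E(S))\subseteq E(U)$. Hence the splittings of $A*_{(\alpha,\lambda)}S$ correspond bijectively with $Z^1(S^1,A^1)$, and transporting through $\mu$ produces the desired correspondence for $U$.

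I do not anticipate a real obstacle: the argument is a direct specialisation of \cref{A*_Lambda-S-splits} combined with the characterisation of $Z^1(S^1,A^1)$ recorded in \cref{H^1(S^1_A^1)}. The only step requiring a line of care is the passage from the multiplicative splitting identity to the $1$-cocycle relation, which uses that the relevant group component is abelian; this in turn relies on the standing hypothesis that $A$ is a semilattice of abelian groups.
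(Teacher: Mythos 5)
Your proposal is correct and follows essentially the same route as the paper's proof: reduce to $A*_{(\alpha,\lambda)}S$ via \cref{U-splits<=>U-eq-A*S} and the bijection $k\mapsto\mu\circ k$, write $k(s)=g(s)\delta_s$ with $g\in C^1(S^1,A^1)$, and read off the cocycle identity $g(st)=g(s)\lambda_s(g(t))$ from the crossed-product multiplication. Your extra observations --- that a homomorphic section automatically lands in $E(U)$ on idempotents, and that rearranging to the form in \cref{H^1(S^1_A^1)} uses commutativity of the group components --- are correct refinements of details the paper leaves implicit.
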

	\begin{proof}
		Notice that if $U'$ is an equivalent extension with $\mu:U\to U'$ being the corresponding isomorphism, then $k\mapsto\mu\circ k$ defines a bijection between the splittings of $U$ and the splittings of $U'$. Therefore we may assume $U$ to be $A*_{(\alpha,\lambda)} S$ thanks to \cref{U-splits<=>U-eq-A*S}.
		
		Let $k:S\to A*_{(\alpha,\lambda)} S$ be a splitting. As we have seen above $k(s)=g(s)\delta_s$ for some $g\in C^1(S^1,A^1)$. Then
		$$
		g(st)\delta_{st}=k(st)=k(s)k(t)=g(s)\delta_s\cdot g(t)\delta_t=g(s)\lambda_s(g(t))\delta_{st},
		$$
		whence $g(st)=g(s)\lambda_s(g(t))$, that is $g\in Z^1(S^1,A^1)$. 
	\end{proof}
	
	We recall from~\cite[IV.2]{Brown} that in the classical case, given a splitting $k$ of a split group extension of $A$ by $G$ and $a\in A$, the conjugate map $k'(g)=i(a)k(g)i(a)\m$ is again a splitting. This may fail in the semigroup case.
	
	\begin{rem}\label{i(a)k(s)i(a)-inv-splitting}
		Let $k$ be a splitting of a split extension $U$ of $A$ by $S$ and $a\in A$. Then $k'(s)=i(a)k(s)i(a)\m$ is a splitting if and only if $A$ is a monoid (equivalently, $S$ is a monoid, or, equivalently, $U$ is a monoid) and $a\in\cU A$.
	\end{rem}
	\noindent Indeed, observe that $k'(E(S))\subseteq E(U)$, as $k(E(S))\subseteq E(U)$, and $j(k'(s))=j(i(a)k(s)i(a)\m)=j(i(a))sj(i(a))\m$ with $j(i(a))$ being an idempotent. Therefore, $j(k'(s))=s$ for all $s\in S$ if and only if $S$ is a monoid, whose identity is $j(i(a))$. Now
	\begin{align}\label{j(i(a))=alpha-in(aa-inv)}
	j(i(a))=j(i(a))j(i(a))\m=j(i(aa\m))=\alpha\m(aa\m) 
	\end{align}
	by \cref{rho|_E(S)=j|_E(U)-inv,alpha-from-rho}, so $j(i(a))=1_S\iff aa\m=1_A$, i.\,e. $a\in\cU A$. Moreover, in this case $k'$ is a homomorphism, as $i(a)\m i(a)=i(a\m a)=i(1_A)=1_U$.
	
	\begin{defn}\label{A-conjugacy-defn}
		Under the conditions of \cref{i(a)k(s)i(a)-inv-splitting} the splitting $k'$ is said to be {\it $A$-conjugate} to $k$.
	\end{defn}
	
	A more general conjugacy in the non-monoid case is motivated by the following.
	\begin{lem}\label{C^0_<=-conjugate}
		Let $S$ be an inverse semigroup, $U$ a split extension of an $S$-module $A$ by $S$ and $k$ a splitting of $U$. Then for any $h\in C^0_\le(S^1,A^1)$ the map 
		\begin{align}\label{k'=g-conj-of-k}
		k'(s)=i(h(ss\m))k(s)i(h(s\m s))\m  
		\end{align}
		is a splitting of $U$.
	\end{lem}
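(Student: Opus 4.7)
My plan is to verify that $k'$ is a splitting by checking separately that it is a transversal of $j$ and a homomorphism. For the transversal part, using \eqref{j(i(a))=alpha-in(aa-inv)} and $h(e)\in A_{\alpha(e)}$ we obtain $j(i(h(ss\m)))=ss\m$ and $j(i(h(s\m s))\m)=s\m s$, so $j(k'(s))=ss\m\cdot s\cdot s\m s=s$; and since $j|_{E(U)}$ is bijective with inverse $e\mapsto i(\alpha(e))$ we have $k(e)=i(\alpha(e))$, whence commutativity of $A$ together with $h(e)h(e)\m=\alpha(e)$ gives $k'(e)=i(h(e)\alpha(e)h(e)\m)=i(\alpha(e))\in E(U)$.

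For the homomorphism part, I would first use \cref{U-splits<=>U-eq-A*S} to identify $U$ with $A*_{(\alpha,\lambda)}S$ in such a way that the given splitting $k$ becomes the canonical one $k(s)=\alpha(ss\m)\delta_s$. A short crossed-product computation (expanding the three factors of $k'(s)$ via $i(a)=a\delta_{\alpha\m(aa\m)}$, \eqref{a-delta_s-inv}, and $\lambda_e(a)=\alpha(e)a$) then yields
\[
k'(s)=g(s)\delta_s,\qquad g(s):=h(ss\m)\lambda_s(h(s\m s))\m\in A_{\alpha(ss\m)},
\]
so that $k'$ is a homomorphism iff $g\in Z^1(S^1,A^1)$, i.e.\ $g(s)\lambda_s(g(t))=g(st)$ in $A_{\alpha(stt\m s\m)}$. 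To prove this identity I would expand $g(s)\lambda_s(g(t))$, apply $\lambda_s\circ\lambda_t=\lambda_{st}$, use commutativity of $A$ and \cref{lem-f(e)f(e')-inv} to rewrite $h(s\m s)\m h(tt\m)=\alpha(s\m s\,tt\m)$, and then invoke \eqref{lambda_s(alpha(e))} together with \eqref{f(ee')=alpha(e')f(e)} (applied to the order-preserving $h$) to collapse the expression to $h(stt\m s\m)\lambda_{st}(h(t\m t))\m$. Comparing with $g(st)=h(stt\m s\m)\lambda_{st}(h(t\m s\m st))\m$, it then suffices to note that $t\m s\m st\le t\m t$ together with \eqref{f(ee')=alpha(e')f(e)} yields $h(t\m s\m st)=\alpha(t\m s\m st)h(t\m t)$, so that after applying $\lambda_{st}$ the two values coincide in $A_{\alpha(stt\m s\m)}$.

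The main obstacle is not any single step but the systematic bookkeeping of idempotent indices: at every stage of the computation one must check that both sides of an equation live in the same group component of $A$, so that the absorption identities \eqref{f(ee')=alpha(e')f(e)} and \eqref{lambda_s(alpha(e))} are legitimately applicable. Conceptually, this is the semigroup analog of the classical fact that conjugating a splitting of a group extension by an element of the kernel again yields a splitting, but the single element of $A$ must be replaced by the order-preserving family $h$, which forces one to track these component constraints carefully.
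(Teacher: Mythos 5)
Your argument is correct, but it takes a genuinely different route for the homomorphism part. The paper proves everything directly inside $U$: it rewrites $k'(s)k'(t)$ using \cref{lem-f(e)f(e')-inv} to turn $i(h(s\m s)\m h(tt\m))$ into $i(\alpha(s\m s\,tt\m))$, absorbs that idempotent via $k(s)\m k(s)=i(\alpha(s\m s))$, $k(t)k(t)\m=i(\alpha(tt\m))$ (idempotent-separation of $j$), and then uses order-preservation of $h$ to convert $\alpha(stt\m s\m)h(ss\m)$ into $h(stt\m s\m)$ — no crossed-product model appears. You instead front-load the identification $U\cong A*_{(\alpha,\lambda)}S$ via \cref{U-splits<=>U-eq-A*S} (legitimate, as that lemma precedes this one) and verify the cocycle identity for $g(s)=h(ss\m)\lambda_s(h(s\m s))\m$; your computation checks out, and the reduction is sound provided you record two small facts you only gesture at: taking $\rho=k$ as transversal makes the induced twisting trivial and the equivalence $a\delta_s\mapsto i(a)k(s)$ carries the canonical splitting $\alpha(ss\m)\delta_s$ to $k(s)$ (since $i(\alpha(ss\m))k(s)=k(ss\m)k(s)=k(s)$), and the conjugation formula \cref{k'=g-conj-of-k} transports along equivalences because $\mu\circ i=i'$ — the paper itself makes this last observation, but only later, at the start of the proof of \cref{conjugate-classes<->H^1_<=(S^1_A^1)}, whose crossed-product computation your argument essentially anticipates and absorbs. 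Two remarks on what each approach buys: the paper's proof is model-free and self-contained; yours produces the explicit formula $g(s)=h(ss\m)\lambda_s(h(s\m s))\m$, which by \cref{delta_2^0f(s)} and commutativity is exactly $(\delta_2^0 h\m)(s)$, so your hand verification of the $1$-cocycle identity could be replaced outright by citing \cref{lem-delta_2^1-circ-delta_2^0} together with $Z^1_\le(S^1,A^1)=Z^1(S^1,A^1)$ from \cref{H^1_le(S^1_A^1)-cong-H^1(S^1_A^1)}, making the conjugated splitting visibly a coboundary-shift of $k$ and shortening your proof further.
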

	\begin{proof}
		By \cref{j(i(a))=alpha-in(aa-inv)} and the fact that $h(e)\in A_{\alpha(e)}$
		$$
		j(k'(s))=\alpha\m(\alpha(ss\m))s\alpha\m(\alpha(s\m s))=ss\m\cdot s\cdot s\m s=s.
		$$
		Moreover, if $e\in E(S)$, then using \cref{alpha-from-rho} we get
		$$
		k'(e)=i(h(e))k(e)i(h(e))\m=i(h(e)\alpha(e)h(e)\m)=i(\alpha(e))=k(e),
		$$
		so $k'(E(S))\subseteq E(U)$. It remains to show that $k'$ is a homomorphism:
		\begin{align}
		k'(s)k'(t)&=i(h(ss\m))k(s)i(h(s\m s)\m h(tt\m))k(t)i(h(t\m t))\m\notag\\
		&=i(h(ss\m))k(s)i(\alpha(s\m s tt\m))k(t)i(h(t\m t))\m \label{k'(s)k'(t)}
		\end{align}
		by \cref{lem-f(e)f(e')-inv}. Since $j(k(s)\m k(s))=s\m s$ and $j(k(t)k(t)\m)=tt\m$, then 
		\begin{align}\label{k(s)k(s)-inv-and-k(s)-inv-k(s)}
		k(s)\m k(s)=i\circ\alpha(s\m s),\  k(t)k(t)\m=i\circ\alpha(tt\m)
		\end{align}
		as $j$ is idempotent-separating. Hence \cref{k'(s)k'(t)} equals 
		\begin{align}\label{i(h(ss-inv))k(st)i(h(t-inv-t))-inv}
		i(h(ss\m))k(s)k(t)i(h(t\m t))\m=i(h(ss\m))k(st)i(h(t\m t))\m.
		\end{align}
		Now applying \cref{k(s)k(s)-inv-and-k(s)-inv-k(s)} to $st$ we rewrite the right-hand side of \cref{i(h(ss-inv))k(st)i(h(t-inv-t))-inv} as 
		\begin{align}\label{i(alpha(stt-inv-s-inv)h(ss-inv))k(st)i(alpha(t-inv-s-inv-st)h(t-inv-t))-inv}
		i(\alpha(stt\m s\m)h(ss\m))k(st)i(\alpha(t\m s\m st)h(t\m t))\m.
		\end{align}
		Observe that $\alpha(stt\m s\m)h(ss\m)=h(stt\m s\m)$, as $stt\m s\m\le ss\m$ and $h$ is order-preserving. Similarly $\alpha(t\m s\m st)h(t\m t)=h(t\m s\m st)$, so \cref{i(alpha(stt-inv-s-inv)h(ss-inv))k(st)i(alpha(t-inv-s-inv-st)h(t-inv-t))-inv} is $k'(st)$.
	\end{proof}
	
	\begin{defn}\label{conjugate-splittings-defn}
		Under the conditions of \cref{C^0_<=-conjugate} the splittings $k$ and $k'$ are said to be {\it $C^0_\le$-equivalent}.
	\end{defn}
	
	\begin{rem}\label{g-conjugation-monoid-case}
		Observe that in the monoid case \cref{k'=g-conj-of-k} becomes
		$$
		k'(s)=i(h(1_S))i(\alpha(ss\m))k(s)i(\alpha(s\m s))i(h(1_S))\m=i(h(1_S))k(s)i(h(1_S))\m.
		$$
		by \cref{f(ee')=alpha(e')f(e),k(s)k(s)-inv-and-k(s)-inv-k(s)}. Since $h(1_S)\in \cU A$, the $C^0_\le$-equivalence generalizes the $A$-conjugacy.
	\end{rem}
	
	\begin{prop}\label{conjugate-classes<->H^1_<=(S^1_A^1)}
		Under the conditions of \cref{splittings<->Z^1(S^1_A^1)} the $C^0_\le$-equivalence classes of splittings of $U$ are in a one-to-one correspondence with the elements of $H^1_\le(S^1,A^1)$.
	\end{prop}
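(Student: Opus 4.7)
The plan is to combine \cref{splittings<->Z^1(S^1_A^1)} with a direct unfolding of the $C^0_\le$-conjugation formula inside the crossed product, and then identify the result with a coboundary in $B^1_\le(S^1,A^1)$. By \cref{U-splits<=>U-eq-A*S} (and the fact that an equivalence $\mu:U\to U'$ of extensions matches the $C^0_\le$-equivalence relations, since $\mu\circ i=i'$ and $\mu$ commutes with multiplication), we may assume without loss of generality that $U=A*_{(\alpha,\lambda)}S$. Under this identification a splitting $k$ has the form $k(s)=g(s)\delta_s$ with $g\in Z^1(S^1,A^1)=Z^1_\le(S^1,A^1)$ (the last equality by \cref{H^1_le(S^1_A^1)-cong-H^1(S^1_A^1)}), so we get a map $[k]\mapsto[g]$ into $H^1_\le(S^1,A^1)$.

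The key calculation is to compute $k'(s)=i(h(ss\m))k(s)i(h(s\m s))\m$ in $A*_{(\alpha,\lambda)}S$ for $h\in C^0_\le(S^1,A^1)$. Using $i(a)=a\delta_{\alpha\m(aa\m)}$, the product rule $a\delta_u\cdot b\delta_v=a\lambda_u(b)\delta_{uv}$ (trivial twisting), the identities $\lambda_{ss\m}(g(s))=\alpha(ss\m)g(s)=g(s)$, $ss\m\cdot s=s\cdot s\m s=s$, and $(a\delta_e)\m=a\m\delta_e$ for $e\in E(S)$, one obtains
\[
k'(s)=h(ss\m)\,g(s)\,\lambda_s(h(s\m s))\m\,\delta_s,
\]
so the $1$-cocycle $g'$ associated with $k'$ is $g'(s)=h(ss\m)\,g(s)\,\lambda_s(h(s\m s))\m$. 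Since $A$ is a semilattice of abelian groups and all factors lie in $A_{\alpha(ss\m)}$, this rearranges to
\[
g'(s)g(s)\m=h(ss\m)\lambda_s(h(s\m s))\m=(\delta_2^0 h)(s)\m,
\]
by \cref{delta_2^0f(s)}. Because $C^0_\le(S^1,A^1)$ is a group (one verifies $h\in C^0_\le\Rightarrow h\m\in C^0_\le$ directly from $h(ef)=\alpha(f)h(e)$) and $\delta_2^0$ is a homomorphism, we get $g'g\m=\delta_2^0(h\m)\in B^1_\le(S^1,A^1)$. Hence $C^0_\le$-equivalent splittings yield cohomologous cocycles, making $[k]\mapsto[g]$ well defined.

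Injectivity is the reverse direction: if $g'g\m=\delta_2^0 h$ for some $h\in C^0_\le(S^1,A^1)$, running the calculation above in reverse with $h\m$ in place of $h$ shows that $k'$ equals the $C^0_\le$-conjugate of $k$ by $h\m$, so $[k]=[k']$. Surjectivity onto $H^1_\le(S^1,A^1)=Z^1_\le(S^1,A^1)/B^1_\le(S^1,A^1)$ is immediate from the bijection between splittings and $Z^1(S^1,A^1)$ given by \cref{splittings<->Z^1(S^1_A^1)}, which descends to cohomology classes thanks to what we just proved.

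The main obstacle is the bookkeeping in the crossed product: correctly expressing $i(h(e))$ and its inverse as elements $h(e)\delta_e$ and $h(e)\m\delta_e$, tracking which $\lambda$'s act trivially on the appropriate idempotent components, and keeping the inverses in the right order before exploiting commutativity in $A_{\alpha(ss\m)}$. Once this is done, the identification of $g'g\m$ with $(\delta_2^0 h)\m$ is forced by the very definition of $\delta_2^0$ in \cref{delta_2^0f(s)}, and the rest of the argument is a formal application of the bijection from \cref{splittings<->Z^1(S^1_A^1)}.
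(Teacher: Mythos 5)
Your proof is correct and takes essentially the same route as the paper: reduce to $U=A*_{(\alpha,\lambda)}S$ after noting that $C^0_\le$-equivalence is compatible with equivalence of extensions, compute the conjugated splitting $k'(s)=h(ss\m)g(s)\lambda_s(h(s\m s))\m\delta_s$ inside the crossed product, and identify the discrepancy $g'g\m$ with a coboundary in $B^1_\le(S^1,A^1)$. The only cosmetic difference is that the paper expresses this coboundary via $p(e)=h(e)\m$ and the explicit description of the coboundary group in \cref{H^1_le(S^1_A^1)-cong-H^1(S^1_A^1)}, while you phrase it as $\delta_2^0(h\m)$ using inversion in $C^0_\le(S^1,A^1)$ — the same computation.
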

	\begin{proof}
		We first observe that the $C^0_\le$-equivalence agrees with the equivalence of extensions. Indeed, if $\mu:U\to U'$ is an isomorphism respecting the diagrams of $U$ and $U'$, then $k'(s)=i(h(ss\m))k(s)i(h(s\m s))\m$ exactly when $\mu(k'(s))=i'(h(ss\m))\mu(k(s))i'(h(s\m s))\m$. So, $k'$ is $C^0_\le$-equivalent to $k$ if and only if $\mu\circ k'$ is $C^0_\le$-equivalent to $\mu\circ k$. This shows that we may assume $U$ to be $A*_{(\alpha,\lambda)}S$ as in the proof of \cref{splittings<->Z^1(S^1_A^1)}.
		
		Let $k$ and $k'$ be splittings of $A*_{(\alpha,\lambda)}S$, and $g,g'$ the corresponding elements of $Z^1(S^1,A^1)$ (see \cref{splittings<->Z^1(S^1_A^1)}). In view of \cref{H^1_le(S^1_A^1)-cong-H^1(S^1_A^1)} it is enough to prove that $k'(s)=i(h(ss\m))k(s)i(h(s\m s))\m$ if and only if $g'(s)=g(s)\lambda_s(p(s\m s))p(ss\m)\m$ for some $p\in C^0_\le(S^1,A^1)$.
		
		Using \cref{a-delta_s-inv} and \cref{lambda_e(a)} of the definition of a twisted $S$-module, one has
		\begin{align*}
		g'(s)\delta_s&=k'(s)=i(h(ss\m))k(s)i(h(s\m s))\m\\
		&=h(ss\m)\delta_{ss\m}\cdot g(s)\delta_s\cdot (h(s\m s)\delta_{s\m s})\m\\
		&=h(ss\m)\lambda_{ss\m}(g(s))\delta_s\cdot \lambda_{s\m s}(h(s\m s)\m)\delta_{s\m s}\\
		&=h(ss\m)g(s)\delta_s\cdot h(s\m s)\m\delta_{s\m s}\\
		&=h(ss\m)g(s)\lambda_s(h(s\m s)\m)\delta_s,
		\end{align*}
		so we may take $p(e)=h(e)\m$.
	\end{proof}
	
	\begin{rem}\label{A-conjugate-classes<->H^1(S_A)}
		Under the conditions of \cref{splittings<->Z^1(S^1_A^1)} assume that $S$ is a monoid. Then the $A$-conjugacy classes of splittings of $U$ are in a one-to-one correspondence with the elements of $H^1(S,A)$.
	\end{rem}
	\noindent  This can be easily explained using \cref{H^1(S_A)}.
	
	\subsection{Split extensions of \texorpdfstring{$A$}{A} by \texorpdfstring{$G$}{G}}\label{sec-split-A-by-G}
	
	Let $A\overset{i}{\to}U\overset{j}{\to}G$ be an extension of $(A,\0)$ by $G$ and $A\overset{i}{\to}U\overset{\pi}{\to}S\overset{\kappa}{\to}G$ a refinement of $U$. We recall from~\cite[Proposition 7.4]{DK2} that there is a one-to-one correspondence between the transversals of $\pi$ and the partial maps $\tau:G\times E(U)\dashrightarrow U$, such that 
	\begin{enumerate}
		\item $\tau(x,e)$ is defined $\iff U(x,e)=\{u\in U\mid j(u)=x,\ \ uu\m=e\}\ne\emptyset$;
		\item $\tau(x,e)\in U(x,e)$, whenever defined;
		\item $\tau(1,e)=e$.
	\end{enumerate}
	Such a map $\tau$ was called a {\it transversal} of $j$ in~\cite{DK2}. 
	
	More precisely, given a transversal $\rho$ of $\pi$, the corresponding transversal $\tau$ of $j$ is defined by 
	\begin{align}\label{tau-from-rho}
	\tau(x,e)=\rho\circ\pi(u),
	\end{align}
	where $u$ is an arbitrary element of $U(x,e)$. The definition does not depend on $u$, since the sets $U(x,e)$ are exactly the classes of $\ker\pi$ by~\cite[Lemma 7.1]{DK2}. Conversely, by $\tau$ one constructs 
	\begin{align}\label{rho-from-tau}
	\rho(s)=\tau(\kappa(s),\pi\m(ss\m)),
	\end{align}
	which is a transversal of $\pi$.
	
	\begin{lem}\label{rho-splitting-in-terms-of-tau}
		The transversal $\rho$ is a splitting of $U$ if and only if the corresponding $\tau$ satisfies
		\begin{align}\label{tau(x_e)tau(y_f)=tau(xy_tau(x_e)-f-tau(x_e)-inv)}
		\tau(x,e)\tau(y,f)=\tau(xy,\tau(x,e)f\tau(x,e)\m)
		\end{align}
		for all $x,y\in G$ and $e,f\in E(U)$  (the equality should be understood as follows: if the left-hand side is defined, then the right-hand side is defined and they coincide).
	\end{lem}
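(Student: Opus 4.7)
The plan is to transfer the homomorphism property of $\rho$ to the identity \cref{tau(x_e)tau(y_f)=tau(xy_tau(x_e)-f-tau(x_e)-inv)} via the mutual constructions \cref{tau-from-rho,rho-from-tau}, exploiting two basic observations: first, $\pi\circ\rho=\id_S$ gives $\rho\circ\pi\circ\tau=\tau$ on every element where $\tau$ is defined, so $\tau$ takes values in $\rho(S)$; second, conjugation by any element of an inverse semigroup sends idempotents to idempotents, so the right-hand side of \cref{tau(x_e)tau(y_f)=tau(xy_tau(x_e)-f-tau(x_e)-inv)} at least has a chance of being defined.

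For the forward direction, assume $\rho$ is a splitting and suppose the left-hand side of \cref{tau(x_e)tau(y_f)=tau(xy_tau(x_e)-f-tau(x_e)-inv)} is defined. Put $u=\tau(x,e)$ and $v=\tau(y,f)$; by property (ii) of a transversal, $u\in U(x,e)$ and $v\in U(y,f)$, so a direct computation gives $j(uv)=xy$ and $(uv)(uv)\m=ufu\m$. In particular $U(xy,ufu\m)\ne\emptyset$, so $\tau(xy,\tau(x,e)f\tau(x,e)\m)$ is defined. Applying \cref{tau-from-rho} to $uv$ and using that both $\pi$ and $\rho$ are homomorphisms,
\begin{align*}
\tau(xy,\tau(x,e)f\tau(x,e)\m)=\rho(\pi(uv))=\rho(\pi(u))\rho(\pi(v))=uv=\tau(x,e)\tau(y,f),
\end{align*}
where in the penultimate step we used $\rho\circ\pi\circ\tau=\tau$.

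For the converse, assume \cref{tau(x_e)tau(y_f)=tau(xy_tau(x_e)-f-tau(x_e)-inv)} and take $s,t\in S$. Setting $x=\kappa(s)$, $y=\kappa(t)$, $e=\pi\m(ss\m)$, $f=\pi\m(tt\m)$, formula \cref{rho-from-tau} gives $\rho(s)=\tau(x,e)$ and $\rho(t)=\tau(y,f)$, both of which are defined because $\rho(s)\in U(x,e)$ and $\rho(t)\in U(y,f)$. The hypothesis then yields $\rho(s)\rho(t)=\tau(xy,\rho(s)f\rho(s)\m)$, and it remains to match this with $\rho(st)=\tau(\kappa(st),\pi\m((st)(st)\m))$. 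The first coordinates agree since $\kappa$ is a homomorphism; for the second, the conjugate $\rho(s)f\rho(s)\m$ is an idempotent of $U$ and
\begin{align*}
\pi(\rho(s)f\rho(s)\m)=s\cdot tt\m\cdot s\m=stt\m s\m=(st)(st)\m,
\end{align*}
so the idempotent-separating property of $\pi$ forces $\rho(s)f\rho(s)\m=\pi\m((st)(st)\m)$, and hence $\rho(s)\rho(t)=\rho(st)$.

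The only genuinely delicate point is the partial nature of $\tau$: one must ensure in the forward direction that whenever the left-hand side of \cref{tau(x_e)tau(y_f)=tau(xy_tau(x_e)-f-tau(x_e)-inv)} is defined, so is the right-hand side. This is handled by exhibiting the concrete element $uv\in U(xy,ufu\m)$, after which the remainder of the argument is essentially bookkeeping with \cref{tau-from-rho,rho-from-tau}.
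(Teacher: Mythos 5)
Your proof is correct and follows essentially the same route as the paper's: in the forward direction you exhibit a concrete element of $U(xy,\tau(x,e)f\tau(x,e)\m)$ to get definedness of the right-hand side and then compute via \cref{tau-from-rho} using that $\rho$ is a homomorphism, and in the converse you reduce $\rho(st)=\rho(s)\rho(t)$ to the identity $\pi\m(stt\m s\m)=\rho(s)\pi\m(tt\m)\rho(s)\m$, settled by the idempotent-separating property of $\pi$. The only cosmetic difference is that you take the witness $uv$ with $u=\tau(x,e)$, $v=\tau(y,f)$ directly (justified by property (ii) of a transversal and your observation $\rho\circ\pi\circ\tau=\tau$), where the paper uses $w=\tau(x,e)v$ for an arbitrary $v\in U(y,f)$ and invokes that the sets $U(x,e)$ are the $\ker\pi$-classes — a negligible variation.
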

	\begin{proof}
		Let $\rho:S\to U$ be a splitting of $U$. Suppose that $\tau(x,e)$ and $\tau(y,f)$ are defined. This means that $U(x,e)$ and $U(y,f)$ are nonempty, so that $\tau(x,e)=\rho\circ\pi(u)$ and $\tau(y,f)=\rho\circ\pi(v)$ for some $u\in U(x,e)$ and $v\in U(y,f)$. Then by \cref{tau-from-rho}
		\begin{align*}
		\tau(x,e)\tau(y,f)=\rho(\pi(u))\rho(\pi(v))=\rho\circ\pi(uv),
		\end{align*}
		as $\rho$ is a homomorphism. Take $w=\tau(x,e)v$ and note that
		\begin{align*}
		j(w)&=j(\tau(x,e)v)=j(\tau(x,e))j(v)=xy,\\
		ww\m&=\tau(x,e)vv\m\tau(x,e)\m=\tau(x,e)f\tau(x,e)\m,
		\end{align*}
		hence $w\in U(xy,\tau(x,e)f\tau(x,e)\m)\ne\emptyset$. Moreover,
		\begin{align*}
		\tau(xy,\tau(x,e)f\tau(x,e)\m)=\rho\circ\pi(w)=\rho(\pi(\tau(x,e))\pi(v))=\rho(\pi(u)\pi(v))=\rho\circ\pi(uv),
		\end{align*}
		proving \cref{tau(x_e)tau(y_f)=tau(xy_tau(x_e)-f-tau(x_e)-inv)}. Here we used the fact that $\pi(\tau(x,e))=\pi(u)$, since both $u$ and $\tau(x,e)$ belong to the same $U(x,e)$.
		
		Conversely, assume that \cref{tau(x_e)tau(y_f)=tau(xy_tau(x_e)-f-tau(x_e)-inv)} holds. For $s,t\in S$ one has 
		\begin{align*}
		\rho(s)\rho(t)&=\tau(\kappa(s),\pi\m(ss\m))\tau(\kappa(t),\pi\m(tt\m)),\\
		\rho(st)&=\tau(\kappa(st),\pi\m(stt\m s\m))=\tau(\kappa(s)\kappa(t),\pi\m(stt\m s\m))
		\end{align*}
		by \cref{rho-from-tau}. In view of \cref{tau(x_e)tau(y_f)=tau(xy_tau(x_e)-f-tau(x_e)-inv)}  in order to prove that $\rho(s)\rho(t)=\rho(st)$, it remains to show that 
		\begin{align}\label{pi(stt-inv-s-inv)=tau-pi(tt-inv)-tau-inv}
		{\pi}\m(stt\m s\m)=\tau(\kappa(s),\pi\m(ss\m))\pi\m(tt\m)\tau(\kappa(s),\pi\m(ss\m))\m.
		\end{align}
		But $\tau(\kappa(s),\pi\m(ss\m))=\rho(s)$, and hence the right-hand side of \cref{pi(stt-inv-s-inv)=tau-pi(tt-inv)-tau-inv} is mapped to $stt\m s\m$ by $\pi$. So, \cref{pi(stt-inv-s-inv)=tau-pi(tt-inv)-tau-inv} follows from the fact that $\pi$ separates idempotents.
	\end{proof}
	
	\begin{defn}\label{A->U->G-splits-defn}
		We say that an extension $A\overset{i}{\to}U\overset{j}{\to}G$ of $(A,\0)$ by $G$ {\it splits}, if there exists a transversal $\tau$ of $j$ satisfying \cref{tau(x_e)tau(y_f)=tau(xy_tau(x_e)-f-tau(x_e)-inv)}. In this case $\tau$ is called a {\it splitting} of $U$.
	\end{defn}
	
	\begin{rem}\label{U-splits-iff-a-refinement-splits}
		Observe that $A\overset{i}{\to}U\overset{j}{\to}G$ splits if and only if there is a refinement $A\overset{i}{\to}U\overset{\pi}{\to}S\overset{\kappa}{\to}G$, such that $A\overset{i}{\to}U\overset{\pi}{\to}S$ splits.
	\end{rem}
	\noindent This is explained by~\cite[Proposition 7.4]{DK2} and \cref{rho-splitting-in-terms-of-tau}.
	
	In particular, any split extension is automatically admissible.
	
	\begin{rem}\label{equiv-ext-splits}
		If an extension $U$ of $(A,\0)$ by $G$ splits, then any equivalent extension splits.
	\end{rem}
	\noindent For if $U'$ is equivalent to $U$, $\mu:U\to U'$ is the corresponding isomorphism and $\tau$ is a splitting of $U$, then $\tau'(x,e')=\mu\circ\tau(x,\mu\m(e'))$, where $x\in G$ and $e'\in E(U')$, is a splitting of $U'$.
	
	\begin{prop}\label{split_ext_is_A*_0-G}
		An extension of $(A,\0)$ by $G$ splits if and only if it is equivalent to $A*_\0 G$.
	\end{prop}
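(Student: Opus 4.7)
The plan is to treat the two directions separately, in both cases reducing via \cref{U-splits-iff-a-refinement-splits} to a statement about the refined extension $A\overset{i}{\to}U\overset{\pi}{\to}S$ and the notion of splitting from \cref{split-ext-S-mod-defn}.

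For the ``if'' direction, \cref{equiv-ext-splits} lets me assume $U=A*_\0 G$. Taking its canonical refinement $A\overset{i}{\to} A*_\0 G\overset{\pi}{\to} E(A)*_\0 G\overset{\kappa}{\to} G$ from the paragraph before \cref{Theta'-from-A*_Theta-G}, I will exhibit the obvious transversal $\rho(e\delta_x)=e\delta_x$ and check that it is a homomorphism. The verification is immediate from the product formula of $A*_\0 G$: with trivial twisting the product of two idempotents $e\delta_x, f\delta_y$ stays inside $E(A)*_\0 G$. Then \cref{U-splits-iff-a-refinement-splits} produces the desired splitting of $A*_\0 G$ itself.

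For the ``only if'' direction, if $U$ splits then by \cref{U-splits-iff-a-refinement-splits} some refinement $A\overset{i}{\to}U\overset{\pi}{\to}S\overset{\kappa}{\to}G$ has a splitting $\rho:S\to U$, i.e., a transversal that is also a homomorphism. I will use the fact that any homomorphism between inverse semigroups is automatically order-preserving (from $s=et$ with $e\in E(S)$ one gets $\rho(s)=\rho(e)\rho(t)$ with $\rho(e)\in E(U)$), so that $\rho$ is an order-preserving transversal and the induced twisted $S$-module structure $\Lambda=(\alpha,\lambda,f)$ from \cref{alpha-from-rho,lambda-from-rho,f-from-rho} is Sieben's. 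A short calculation using multiplicativity of $\rho$ together with \cref{rho|_E(S)=j|_E(U)-inv} then gives $f(s,t)=i\m(\rho(st)\rho(st)\m)=i\m(\rho(stt\m s\m))=\alpha(stt\m s\m)$, i.e., $f$ is the trivial twisting.

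Pushing this through the correspondence of \cref{subsec-Theta-and-Lambda} via \cref{w-from-Lambda}, the twisted partial action of $G$ on $A$ induced by $\Lambda$ is $(\0',w)$ with $w_{x,y}a=f(s,s\m t)a=\alpha(ss\m tt\m)a=a$ trivial; and $\0'=\0$ by \cref{ext-of-G-mod-by-G-defn}. Via the bijection $[w]\mapsto[A*_{(\0,w)}G]$ of \cref{cl-eq-ext<->H^2}, the class of $U$ therefore coincides with that of $A*_\0 G$. The main technical point I expect is checking that a multiplicative $\rho$ really produces the trivial twisting $f$ and that this in turn forces the corresponding $w$ to be trivial; once these identifications are carefully unwound, the rest is a routine application of the machinery developed in \cref{sec-prelim,sec-ext-abel}.
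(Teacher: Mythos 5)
Your proof is correct, but it takes a genuinely different route from the paper's. The paper argues through cohomology all the way: it first invokes \cref{cl-eq-ext<->H^2} to replace $U$ by $A*_{(\0,w)}G$, takes the standard refinement with its canonical (generally non-multiplicative) transversal, transfers the splitting across Lausch's equivalence $A*_{(\0,w)}G\sim A*_\Lambda S$, characterizes splitting of $A*_\Lambda S$ by $f\in B^2(S^1,A^1)$ via \cref{A*_Lambda-S-splits}, pulls ``coboundary'' back to $w$ through \cref{H^n(GA)-cong-H^n_<=(SA)}, and finishes with \cref{eq-tw-to-eq-ext}; since every step is an equivalence, both directions come out of one chain. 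You instead handle the directions separately and more concretely: for ``only if'' you use the splitting itself as the transversal, observing that a homomorphism of inverse semigroups is automatically order-preserving, so the induced twisted module is Sieben's with \emph{literally trivial} twisting $f(s,t)=i\m(\rho(st)\rho(st)\m)=\alpha(stt\m s\m)$, hence trivial $w$ by \cref{w-from-Lambda} --- the paper, fixing the canonical transversal instead, can only conclude $f\in B^2$; for ``if'' you verify directly that $\rho(e\delta_x)=e\delta_x$ is multiplicative. This buys you independence from \cref{A*_Lambda-S-splits} and from Lausch's Theorem 7.4, at the cost of leaning on the full bijection of \cref{cl-eq-ext<->H^2} in the only-if direction (in particular on the well-definedness of $[U]\mapsto[w]$ under change of refinement and transversal, i.e.\ \cref{equiv-ext-to-equiv-tw-pact}, which is what justifies your step $\0'=\0$ via \cref{ext-of-G-mod-by-G-defn}); the paper uses \cref{cl-eq-ext<->H^2} only for the initial reduction. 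One wording slip to fix: $e\delta_x$ is \emph{not} an idempotent of $A*_\0 G$ unless $x=1$; what your computation actually shows is that $e\delta_x\cdot f\delta_y=\0_x(\0_{x\m}(e)f)\delta_{xy}$ lands in the embedded copy $\{e\delta_z\mid e\in E(\cD_z)\}$ of $E(A)*_\0 G$, which is exactly the multiplicativity of $\rho$, so the argument itself stands.
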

	\begin{proof}
		By \cref{cl-eq-ext<->H^2} any extension $U$ of $(A,\0)$ by $G$ is equivalent to $A*_{(\0,w)}G$ for some twisting $w$ related to $(A,\0)$. In view of \cref{equiv-ext-splits} $U$ splits if and only if $A*_{(\0,w)}G$ does. Consider the ``standard'' refinement $A\overset{i}{\to}A*_{(\0,w)}G\overset{\pi}{\to}S\overset{\kappa}{\to}G$, where $S=E(A)*_\0 G$. Let $\Lambda=(\alpha,\lambda,f)$ be the induced $S$-module structure on $A$ together with the corresponding order-preserving twisting related to it (see \cref{lambda-from-Theta,f-from-Theta}). By~\cite[Theorem 7.4]{Lausch} $A*_{(\0,w)}G$ and $A*_\Lambda S$ are equivalent as extensions of $A$ by $S$. Therefore, $A*_\Lambda S$ also splits, so $f$ is a coboundary thanks to \cref{A*_Lambda-S-splits}. It follows from \cref{H^n(GA)-cong-H^n_<=(SA)} that $w$ is a partial coboundary, and thus $(\0,w)$ is equivalent to $\0$ with the trivial twisting. Then the extension $A*_{(\0,w)}G$ is equivalent to $A*_\0 G$ by \cref{eq-tw-to-eq-ext}.
	\end{proof} 
	
	\begin{lem}\label{splittings<->Z^1(G_A)}
		Let $A\overset{i}{\to}U\overset{j}{\to}G$ be a split extension of $(A,\0)$ by $G$. Then there is a one-to-one correspondence between the splittings of $U$ and the elements of $Z^1(G,A)$.
	\end{lem}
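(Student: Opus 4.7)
The plan is to string together several bijections proved earlier. The first move is to reduce to a canonical model: by \cref{split_ext_is_A*_0-G}, $U$ is equivalent to $A*_\0 G$, and any equivalence $\mu:U\to A*_\0 G$ transports splittings bijectively via $\tau\mapsto\tau'$, $\tau'(x,e')=\mu(\tau(x,\mu\m(e')))$. So I may assume $U=A*_\0 G$, equipped with the standard refinement $A\overset{i}{\to}A*_\0 G\overset{\pi}{\to}S\overset{\kappa}{\to}G$, where $S=E(A)*_\0 G$, and with the order-preserving transversal $\rho_0(e\delta_x)=e\delta_x$ of $\pi$, which is already a homomorphism.

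Next, I combine \cref{rho-splitting-in-terms-of-tau} with \cref{A->U->G-splits-defn}: the formulas \cref{tau-from-rho,rho-from-tau} yield mutually inverse bijections between the splittings $\tau$ of $U$ as an extension by $G$ and the splittings $\rho:S\to U$ of $\pi$ as an extension by $S$. Here $A$ carries the $S$-module structure $(\alpha,\lambda)$ induced by $\rho_0$, which by the construction of the refinement coincides with the one given in \cref{alpha(e-delta_1),lambda-from-Theta}; the induced twisting vanishes because $\rho_0$ is multiplicative, so $A\overset{i}{\to}A*_\0 G\overset{\pi}{\to}S$ is a split extension of the $S$-module $A$ by $S$.

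Then \cref{splittings<->Z^1(S^1_A^1)} identifies the splittings $\rho$ of $\pi$ with the elements of $Z^1(S^1,A^1)$, each $\rho$ having the form $\rho(s)=g(s)\delta_s$ for a unique $g\in Z^1(S^1,A^1)$. Afterwards, \cref{H^1_le(S^1_A^1)-cong-H^1(S^1_A^1)} asserts $Z^1(S^1,A^1)=Z^1_\le(S^1,A^1)$, and finally \cref{H^n(GA)-cong-H^n_<=(SA)} provides the isomorphism $Z^1_\le(S^1,A^1)\cong Z^1(G,A)$, which is induced by the map inverse to $w\mapsto f$ of \cref{lem-from-w-to-f}. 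Composing all these bijections yields the desired correspondence.

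The only point requiring care is the compatibility of the $S$-module structures appearing at the different stages: the $(\alpha,\lambda)$ induced on $A$ by the standard refinement and the transversal $\rho_0$ must be precisely the one produced from the partial action $(A,\0)$ by the formulas in \cref{subsec-Theta-and-Lambda}. This is immediate from the explicit choices $S=E(A)*_\0 G$ and $\rho_0(e\delta_x)=e\delta_x$, so the chain of bijections composes cleanly and no real obstacle remains beyond this bookkeeping.
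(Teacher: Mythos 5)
Your proposal is correct and follows essentially the same route as the paper's own proof: reduce to $U=A*_\0 G$ with its standard refinement, pass from splittings of $j$ to splittings of $\pi$ via \cref{rho-splitting-in-terms-of-tau}, identify these with $Z^1(S^1,A^1)$ by \cref{splittings<->Z^1(S^1_A^1)}, and conclude via \cref{H^1_le(S^1_A^1)-cong-H^1(S^1_A^1),H^n(GA)-cong-H^n_<=(SA)}. The only cosmetic difference is that for the compatibility of the induced $S$-module structure with \cref{lambda-from-Theta} the paper cites an external lemma of \cite{DK2} where you argue directly from the explicit choices of $S$ and the transversal, which is fine.
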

	\begin{proof}
		Taking into account \cref{equiv-ext-splits,split_ext_is_A*_0-G} we may assume that $U=A*_\0 G$. Moreover, we shall choose the standard refinement $A\overset{i}{\to}U\overset{\pi}{\to}S\overset{\kappa}{\to}G$ of $U$, where $S=E(A)*_\0 G$. 
		
		By \cref{rho-splitting-in-terms-of-tau} and~\cite[Proposition 7.4]{DK2} the splittings of $A\overset{i}{\to}U\overset{j}{\to}G$ are in a one-to-one correspondence with the splittings of $A\overset{i}{\to}U\overset{\pi}{\to}S$, which are in a one-to-one correspondence with the elements of $Z^1(S^1,A^1)$ by \cref{splittings<->Z^1(S^1_A^1)}. Observe that $Z^1(S^1,A^1)=Z^1_\le(S^1,A^1)$ thanks to \cref{H^1_le(S^1_A^1)-cong-H^1(S^1_A^1)}. Furthermore, the $S$-module structure on $A$ coming from $A\overset{i}{\to}U\overset{\pi}{\to}S$ is exactly the one defined by \cref{lambda-from-Theta}  thanks to~\cite[Lemma 8.1]{DK2}. Therefore, $Z^1_\le(S^1,A^1)\cong Z^1(G,A)$ by \cref{H^n(GA)-cong-H^n_<=(SA)}.
	\end{proof}
	
	\begin{lem}\label{k-equiv-k'-in-terms-of-tau}
		Two splittings $k$ and $k'$ of $A\overset{i}{\to}U\overset{\pi}{\to}S$ are $C^0_\le$-equivalent if and only if there exists an order-preserving $\eta:E(U)\to A$ with $\eta(e)\in A_{i\m(e)}$, such that the corresponding splittings $\tau$ and $\tau'$ of $A\overset{i}{\to}U\overset{j}{\to}G$ satisfy
		\begin{align}\label{tau-equiv-tau'}
		\tau'(x,e)=i(\eta(e))\tau(x,e)i(\eta(\tau(x,e)\m e\tau(x,e)))\m.
		\end{align}
	\end{lem}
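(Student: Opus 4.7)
The plan is to use the bijection of~\cite[Proposition 7.4]{DK2} to translate the $C^0_\le$-equivalence formula \cref{k'=g-conj-of-k} into \cref{tau-equiv-tau'}. Recall from \cref{tau-from-rho,rho-from-tau} that splittings $k$ of $\pi$ correspond bijectively to splittings $\tau$ of $j$, with $\tau(x,e)=k(s)$, where $s$ is the unique element of $S$ with $\kappa(s)=x$ and $\pi(e)=ss\m$. So it suffices to translate the relation between $k,k'$ into a relation between $\tau,\tau'$.

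First I would set up a bijection between functions $h\in C^0_\le(S^1,A^1)$ and functions $\eta:E(U)\to A$ as in the statement, by putting $\eta=h\circ\pi|_{E(U)}$ and, inversely, $h=\eta\circ(\pi|_{E(U)})\m$. Since $\pi$ is idempotent-separating and surjective, $\pi|_{E(U)}$ is a semilattice isomorphism, so the order-preserving property transfers in both directions. The component condition $h(e')\in A_{\alpha(e')}$ is equivalent to $\eta(e)\in A_{i\m(e)}$ because $\alpha\circ\pi|_{E(U)}=i\m|_{E(U)}$, which follows from \cref{alpha-from-rho,rho|_E(S)=j|_E(U)-inv} applied to the extension $A\overset{i}{\to}U\overset{\pi}{\to}S$.

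Next, I would fix $(x,e)$ with $U(x,e)\ne\emptyset$ and let $s\in S$ be the element corresponding to it as above, so that $\tau(x,e)=k(s)$ and $\tau'(x,e)=k'(s)$. The left-hand factor is immediate: $h(ss\m)=h(\pi(e))=\eta(e)$. For the right-hand factor, the key observation is the identity
\[
\tau(x,e)\m e\tau(x,e)=\tau(x,e)\m\tau(x,e),
\]
which follows from $\tau(x,e)\tau(x,e)\m=e$. Applying $\pi$ gives $\pi(\tau(x,e)\m\tau(x,e))=s\m s$, hence $\tau(x,e)\m e\tau(x,e)=(\pi|_{E(U)})\m(s\m s)$ and therefore $\eta(\tau(x,e)\m e\tau(x,e))=h(s\m s)$. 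Substituting these two identifications into \cref{k'=g-conj-of-k} produces exactly \cref{tau-equiv-tau'}, and running the same chain of equalities backwards recovers \cref{k'=g-conj-of-k} from \cref{tau-equiv-tau'}.

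The only mildly subtle point, and thus the main obstacle, is the identity $\tau(x,e)\m e\tau(x,e)=\tau(x,e)\m\tau(x,e)$ and its interaction with $\pi$: this is the mechanism by which the $S$-level factor $i(h(s\m s))\m$ becomes the transversal-dependent $U$-level factor $i(\eta(\tau(x,e)\m e\tau(x,e)))\m$. Once this is in hand, everything else is bookkeeping about the bijections between transversals, splittings, and cochains.
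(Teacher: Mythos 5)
Your proof is correct and takes essentially the same route as the paper's: both directions translate between $h$ and $\eta$ via the idempotent-separating epimorphism $\pi$ (using $\alpha\circ\pi|_{E(U)}=i\m|_{E(U)}$) and between $k$ and $\tau$ via \cref{tau-from-rho,rho-from-tau}. Your key identity $\tau(x,e)\m e\tau(x,e)=\tau(x,e)\m\tau(x,e)$ is merely a more direct packaging of the paper's computation $\pi(u\m u)=\pi(\tau(x,e))\m\pi(e)\pi(\tau(x,e))$ followed by idempotent separation, so the two arguments coincide in substance.
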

	\begin{proof}
		Suppose that $k$ and $k'$ are $C^0_\le$-equivalent, that is, there is $h\in C^0_\le(S^1,A^1)$, such that $k'(s)=i(h(ss\m))k(s)i(h(s\m s))\m$. Set $\eta=h\circ\pi|_{E(U)}$ and observe that $\eta:E(U)\to A$ is order-preserving, as $h$ is, and $\eta(e)\in A_{\alpha\circ\pi(e)}=A_{i\m(e)}$. If $\tau'(x,e)$ is defined, then taking $u\in U(x,e)\ne\emptyset$, one has by \cref{tau-from-rho}
		\begin{align*}
		\tau'(x,e)=k'\circ\pi(u)&=i(h(\pi(uu\m)))k(\pi(u))i(h(\pi(u\m u)))\m.
		\end{align*}
		But $uu\m=e$, so $i(h(\pi(uu\m)))=i(\eta(e))$. Moreover,
		$$
		\pi(u\m u)=\pi(u\m eu)=\pi(u)\m\pi(e)\pi(u)=\pi(\tau(x,e))\m\pi(e)\pi(\tau(x,e)),
		$$
		whence $u\m u=\tau(x,e)\m e\tau(x,e)$, proving \cref{tau-equiv-tau'}.
		
		Conversely, if \cref{tau-equiv-tau'} holds, then setting $h=\eta\circ\pi\m|_{E(S)}$ and using \cref{rho-from-tau} we obtain
		\begin{align*}
		k'(s)=\tau'(\kappa(s),\pi\m(ss\m))&=i(h(ss\m))k(s)i(\eta(k(s)\m \pi\m(ss\m)k(s))))\m\\
		&=i(h(ss\m))k(s)i(h(s\m s))\m,
		\end{align*}
		as $k|_{E(S)}=\pi\m|_{E(S)}$. Clearly $h\in C^0_\le(S^1,A^1)$, so $k'$ is $C^0_\le$-equivalent to $k$.
	\end{proof}
	
	\begin{defn}\label{defn-A-conjugacy}
		Two splittings $\tau$ and $\tau'$ of a split extension $A\overset{i}{\to}U\overset{j}{\to}G$ are said to be {\it equivalent}, if they satisfy \cref{tau-equiv-tau'} for some order-preserving $\eta:E(U)\to A$ with $\eta(e)\in A_{i\m(e)}$.
	\end{defn}
	
	\begin{thrm}\label{eq-classes-of-splittings<->H^1(G_A)}
		Under the conditions of \cref{splittings<->Z^1(G_A)} the equivalence classes of splittings of $U$ are in a one-to-one correspondence with the elements of $H^1(G,A)$.
	\end{thrm}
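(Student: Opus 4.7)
The plan is to combine the bijection of \cref{splittings<->Z^1(G_A)} with the compatibility statement of \cref{k-equiv-k'-in-terms-of-tau} in order to transport the result of \cref{conjugate-classes<->H^1_<=(S^1_A^1)} from the $S$-setting to the $G$-setting. More precisely, I would fix the standard refinement $A\overset{i}{\to}U\overset{\pi}{\to}S\overset{\kappa}{\to}G$ with $S=E(A)*_\0 G$ employed in the proof of \cref{splittings<->Z^1(G_A)}. The bijection constructed there between splittings $\tau$ of $U$ and elements of $Z^1(G,A)$ factors as
\begin{align*}
\{\tau\}\ \longleftrightarrow\ \{\rho\}\ \longleftrightarrow\ Z^1(S^1,A^1)=Z^1_\le(S^1,A^1)\ \cong\ Z^1(G,A),
\end{align*}
where the first arrow comes from \cref{rho-splitting-in-terms-of-tau}, the second from \cref{splittings<->Z^1(S^1_A^1)}, the equality from \cref{H^1_le(S^1_A^1)-cong-H^1(S^1_A^1)}, and the final isomorphism from \cref{H^n(GA)-cong-H^n_<=(SA)}. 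It therefore suffices to prove that this bijection descends to equivalence classes on both sides.

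To carry this out I would proceed in three short steps. First, by \cref{k-equiv-k'-in-terms-of-tau}, the equivalence of \cref{defn-A-conjugacy} on splittings $\tau$ of $U$ is precisely the pull-back of the $C^0_\le$-equivalence on splittings $\rho$ of the refined extension $A\overset{i}{\to}U\overset{\pi}{\to}S$. Second, \cref{conjugate-classes<->H^1_<=(S^1_A^1)} identifies the $C^0_\le$-equivalence classes of such splittings $\rho$ with the elements of $H^1_\le(S^1,A^1)$, through the very same bijection $\rho\leftrightarrow g\in Z^1_\le(S^1,A^1)$ already in play. Third, the isomorphism $H^1_\le(S^1,A^1)\cong H^1(G,A)$ of \cref{H^n(GA)-cong-H^n_<=(SA)} sends the class of $g$ to the class of the corresponding $w\in Z^1(G,A)$, so cohomologous cocycles on the $S$-side go to cohomologous cocycles on the $G$-side. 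Composing these three matches yields the desired one-to-one correspondence between equivalence classes of splittings of $U$ and elements of $H^1(G,A)$.

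The main (and essentially only) obstacle is bookkeeping: one has to verify that the three bijections compose as advertised, and in particular that the notion of equivalence of splittings of $U$ introduced in \cref{defn-A-conjugacy} agrees, under the identification $\tau\leftrightarrow\rho$, with $C^0_\le$-equivalence on the $S$-side. This last match is exactly what \cref{k-equiv-k'-in-terms-of-tau} provides, so no additional calculation is required; the proof reduces to invoking the previously established bijections in the correct order.
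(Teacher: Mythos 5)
Your proposal is correct and takes essentially the same route as the paper's own proof: fix $U=A*_\0 G$ with its standard refinement, use \cref{k-equiv-k'-in-terms-of-tau} to match the equivalence of splittings $\tau$ with the $C^0_\le$-equivalence of splittings of $A\overset{i}{\to}U\overset{\pi}{\to}S$, and then conclude via \cref{conjugate-classes<->H^1_<=(S^1_A^1)} and \cref{H^n(GA)-cong-H^n_<=(SA)}. The extra bookkeeping you flag (that the bijections compose and respect coboundaries) is exactly what those cited results already guarantee, so nothing is missing.
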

	\begin{proof}
		As in the proof of \cref{splittings<->Z^1(G_A)} choose $U$ to be $A*_\0 G$ and consider the standard refinement $A\overset{i}{\to}U\overset{\pi}{\to}S\overset{\kappa}{\to}G$ of $U$. By \cref{k-equiv-k'-in-terms-of-tau} there is a one-to-one correspondence between the equivalence classes of splittings of $A\overset{i}{\to}U\overset{j}{\to}G$ and the $C^0_\le$-equivalence classes of splittings of $A\overset{i}{\to}U\overset{\pi}{\to}S$. It remains to apply \cref{conjugate-classes<->H^1_<=(S^1_A^1),H^n(GA)-cong-H^n_<=(SA)}.
	\end{proof}
	
	\section*{Acknowledgements}
	
	We thank the referee for useful comments.
	
	\bibliography{bibl-pact}{}
	\bibliographystyle{acm}
	
\end{document}